\documentclass[11pt]{amsart}
\usepackage[utf8]{inputenc}
\usepackage{enumerate,enumitem,amsmath,amssymb,mathrsfs,stmaryrd}
\usepackage{latexsym,epsf,graphicx,comment,appendix}
\usepackage{MnSymbol}

\usepackage{tikz}
\usetikzlibrary{arrows.meta}
\usetikzlibrary{automata}
\usetikzlibrary{chains}

\usepackage[english]{babel}
\usepackage{color}
\usepackage{times}
\usepackage[T1]{fontenc}

\usepackage{amsmath}
\usepackage{dsfont}
\usepackage{amsfonts}

\newfont{\bb}{msbm10 at 12pt}
\newfont{\tbb}{msbm10 at 8pt}

\def\n{\hbox{\bb N}}

\topmargin 0cm 
\textheight = 40
\baselineskip 
\textwidth 16cm 
\oddsidemargin 0.3cm
\evensidemargin 0.4cm

\usepackage {amsmath}
\usepackage {amsthm}
\usepackage{times}
\usepackage{amscd}
\usepackage{epsf}

\numberwithin{equation} {subsection}

\begin{document}
\mbox{}\vspace{0.2cm}\mbox{}

\providecommand{\keywords}[1]
{
  \small	
  \textbf{\textit{Keywords---}} #1
}

\

\theoremstyle{plain}\newtheorem{lemma}{Lemma}[subsection]
\theoremstyle{plain}\newtheorem{proposition}{Proposition}[subsection]
\theoremstyle{plain}\newtheorem{theorem}{Theorem}[subsection]

\theoremstyle{plain}\newtheorem{theorem-intr}{Theorem}[]
\theoremstyle{plain}\newtheorem*{theorem*}{Theorem}
\theoremstyle{plain}\newtheorem*{main theorem}{Main Theorem} 
\theoremstyle{plain}\newtheorem*{lemma*}{Lemma}
\theoremstyle{plain}\newtheorem*{claim}{Claim}

\theoremstyle{plain}\newtheorem{example}{Example}[subsection]
\theoremstyle{plain}\newtheorem{remark}{Remark}[subsection]
\theoremstyle{plain}\newtheorem{corollary}{Corollary}[subsection]
\theoremstyle{plain}\newtheorem{corollary-intr}{Corollary}[]
\theoremstyle{plain}\newtheorem*{corollary-A}{Corollary}
\theoremstyle{plain}\newtheorem{definition}{Definition}[subsection]
\theoremstyle{plain}\newtheorem{acknowledge}{Acknowledgment}
\theoremstyle{plain}\newtheorem{conjecture}{Conjecture}

\begin{center}
\rule{15cm}{1.5pt} \vspace{.4cm}

{\bf\Large On the asymptotic behavior of $p$-superharmonic functions at singularities} 
\vskip .3cm

Huajie Liu, Shiguang Ma, Jie Qing, and Shuhui Zhong

\vspace{0.3cm} 
\rule{15cm}{1.5pt}
\end{center}


\title{}

\begin{abstract} In this paper we develop the $p$-thinness and the $p$-fine topology for the asymptotic behavior of $p$-superharmonic functions at singular points.
We consider these as extensions of earlier works on superharmonic functions in dimension 2 in \cite{AH73},  on the Riesz
and Log potentials in higher dimensions in \cite[Section 2.5 and 2.6]{Mi96}, and on $p$-harmonic functions in \cite{KV}. 
It is remarkable that, contrary to the above cases, 
the $p$-thinness for the singular behavior (cf. Definition \ref{Def:quasi-p-thin}) differs from the $p$-thinness for continuity by the Wiener criterion for 
$p$-superharmonic functions. As applications of asymptotic estimates of $p$-superharmonic functions, we also obtain asymptotic estimates of solutions 
to a class of fully nonlinear elliptic equations, which go beyond \cite[Theorem 3.6]{Lab02}. This paper grows out of our recent papers on the potential theory 
in conformal geometry \cite{MQ21, MQ021, MQ22, MQ23}.
\end{abstract}


\subjclass{31B05; 31B35; 53C21}
\keywords {$p$-Laplace equations, $p$-superharmonic functions, the Wolff potentials, asymptotics at singularity, $p$-thinness, and $p$-fine topology}

\maketitle

\section{Introduction}\label{Sec:intr}

In this paper we are concerned with the asymptotic behavior of $p$-superharmonic functions at singular points. 
We develop the $p$-thinness (cf. Definition \ref{Def:quasi-p-thin}) 
and the induced  $p$-fine topology for the asymptotic behavior of $p$-superharmonic functions at 
singular points. The notion of the fine topology was first introduced to potential theory by Henri Cartan in 1940 as the weakest topology on Euclidean space
$\mathbb{R}^n$ making all superharmonic functions continuous. Such idea also works for identifying the topology to get asymptotics at singularities seen 
in \cite{AH73} and \cite[Section 2.5 and 2.6]{Mi96}, which has geometric applications in \cite{MQ21, MQ021,MQ22, MQ23} following \cite{Hu57, AH73}. 
It is remarkable that, contrary to the single notion of thinness and the fine topology for 
the Riesz potentials and Log potentials at regular and singular points, the $p$-thinness for 
the singular behavior is different from the $p$-thinness for continuity by the Wiener criterion on $p$-superharmonic functions. 
The $p$-thinness for continuity by the Wiener criterion \eqref{Equ:p-thin-wiener}
has been confirmed to induce the fine topology for continuity of $p$-superharmonic functions for $p\in (1, n]$ in the seminal paper \cite{KM94} (see also 
\cite{HeKi, KM92, HKM}). We are able to confirm the $p$-thinness for the singular behavior by Definition \ref{Def:quasi-p-thin}
indeed induces the fine topology for taking limits to get the asymptotic of $p$-superharmonic functions at singular points for $p\in (1, n]$.


\subsection{On the Wolff potentials and the $p$-fine topology}

In nonlinear potential theory, one important tool for the study of $p$-Laplace equations is the Wolff potential that in some way substitutes the Riesz potential
and the Log potential in linear potential theory
\begin{equation*}
W^{\mu}_{1,p}(x,\, r)=\int_0^r\left( \frac{\mu(B(x,t))}{t^{n-p}}\right)^{\frac 1 {p-1}}\frac {dt}{t}
\end{equation*}
for a nonnegative Radon measure $\mu$ and $p\in (1, n]$.  We first focus on the asymptotic behavior of the Wolff potentials at singular points. 
After the introduction of 
the $p$-thinness for the singular behavior in Definition \ref{Def:quasi-p-thin}, we establish 

\begin{theorem-intr}\label{Thm:asymptotic-wolff-intr}
Let $\mu$ be a nonnegative finite Radon measure in $\Omega$ and $B(x_0, 3r_0)\subset \Omega$. 
Then there is a subset $E$ that is $p$-thin for the singular behavior at $x_0$ such that
\begin{equation*}
\lim_{x\to x_0 \text{ and } x\notin E } |x-x_0|^\frac {n-p}{p-1} W_{1,p}^{\mu}(x, \, r_0) = \frac{p-1}{n-p} \mu(\{x_0\})^\frac 1{p-1}
\end{equation*}
for $p\in (1, n)$. Similarly, there is a subset $E$ that is $n$-thin for the singular behavior at $x_0$ such that
\begin{equation*}
\lim_{x\to x_0 \text{ and } x\notin E} \frac{W_{1,n}^{\mu}(x, \, r_0)}{\log\frac 1{|x-x_0|}} = \mu(\{x_0\})^\frac 1{n-1}.
\end{equation*}
\end{theorem-intr}

To confirm the $p$-thinness for the singular behavior in Definition \ref{Def:quasi-p-thin} indeed induces the $p$-fine topology for the singular behavior we
also show

\begin{theorem-intr}\label{Thm:fine-topology-intr}
Suppose that $E$ is a subset that is $p$-thin for the singular behavior at the origin
according to Definition \ref{Def:quasi-p-thin} for $p\in (1, n]$. And suppose that the origin is in $\bar E\setminus E$. Then, when $p\in (1, n)$, 
there is a Radon measure $\mu$ in a neighborhood of the origin such that, for some fixed $r_0>0$, 
$$
\lim_{x\to 0 \text{ and } x\in E} |x|^\frac {n-p}{p-1}W^\mu_{1, p}(x, r_0) = \infty.
$$
Similarly, when $p=n$, there is a Radon measure $\mu$ in a neighborhood of the origin such that, for some fixed $r_0>0$, 
$$
\lim_{x\to 0 \text{ and } x\in E} \frac {W^\mu_{1, n}(x, r_0)}{\log\frac 1{|x|}} = \infty.
$$
\end{theorem-intr}

Combining Theorem \ref{Thm:asymptotic-wolff-intr} and \ref{Thm:fine-topology-intr} we conclude

\begin{corollary-intr}\label{Cor:wolff-p-thin-intr} A subset $E\subset \mathbb{R}^n$ is 
$p$-thin for the singular behavior at the origin if and only if there is a Radon measure $\mu$ such that
$$
\lim_{x\to 0 \text{ and } x\in E} \frac {W^\mu_{1, p} (x, r_0)}{|x|^{-\frac {n-p}{p-1}}} > \liminf_{x\to 0}\frac {W^\mu_{1, p} (x, r_0)}{|x|^{-\frac {n-p}{p-1}}}
$$
for $p\in (1, n)$. Similarly, a subset $E\subset \mathbb{R}^n$ is 
$n$-thin for the singular behavior at the origin if and only if there is a Radon measure $\mu$ such that
$$
\lim_{x\to 0 \text{ and } x\in E} \frac {W^\mu_{1, p} (x, r_0)}{\log \frac 1{|x|}} > \liminf_{x\to 0}\frac {W^\mu_{1, p} (x, r_0)}{\log \frac 1{|x|}}.
$$
\end{corollary-intr}

\subsection{On $p$-superharmonic functions and applications}
As a straightforward consequence of Theorem \ref{Thm:fine-topology-intr} together with the existence Lemma \ref{Lem:existence-p-sh} 
(\cite[Theorem 2.4]{KM92}) and the fundamental estimate Theorem \ref{Equ: main-use-wolff} (cf. \cite[Theorem 1.6]{KM94}), 
we first have

\begin{theorem-intr}\label{Thm:infty-p-intr}
Suppose that $E$ is a subset that is $p$-thin for the singular behavior at the origin
according to Definition \ref{Def:quasi-p-thin} for $p\in (1, n]$. And suppose that the origin is in $\bar E\setminus E$. Then, when $p\in (1, n)$, 
there is a $p$-superharmonic function $u$ in a neighborhood of the origin such that
$$
\lim_{x\to 0 \text{ and } x\in E} |x|^\frac {n-p}{p-1} u(x)  = \infty.
$$
Similarly, when $p=n$, there is a $n$-superharmonic function $u$ in a neighborhood of the origin such that
$$
\lim_{x\to 0 \text{ and } x\in E} \frac {u(x)}{\log\frac 1{|x|}} = \infty.
$$
\end{theorem-intr}

Theorem \ref{Thm:infty-p-intr} suggests that one can only derive the asymptotic behavior of $p$-superharmonic functions at singularities 
under the $p$-fine topology in general. Indeed we are able to complete the asymptotic estimates for $p$-superharmonic functions at singularities 
based on the rescaling approach in \cite{KV, Vn17} (see also \cite{MQ21, MQ021}) and the asymptotic estimates on the Wolff potentials at singularities.  
Our work here is to realize the uniqueness of all possible sequential limits and then use the precise asymptotic behavior of the 
Wolff potential at singularities with respect to the $p$-fine topology in Theorem \ref{Thm:asymptotic-wolff-intr} to derive 
the asymptotic behavior of $p$-superharmonic functions at singularities with respect to the $p$-fine topology. 
 
\begin{theorem-intr}\label{Thm:main-asymptotic-intr}
Suppose that $u$ is a nonnagetive $p$-superharmonic function in $\Omega\subset\mathbb{R}^n$ satisfying
$$
-\Delta_p u = \mu \text{ in $\Omega$}
$$
for a nonnegative finite Radon measure $\mu$ on $\Omega$ and $p\in (1, n]$. Then, for $x_0\in\Omega$, there is a subset $E$ that is $p$-thin for singular
behavior at $x_0$ such that
\begin{equation}\label{Equ:p-super-asymptotic}
\lim_{x\to x_0\text{ and } x\notin E}  \frac{u(x)}{G_p(x, x_0)} = m = \left\{\aligned \frac{p-1}{n-p} (\frac {\mu(\{0\})}{|\mathbb{S}^{n-1}|})^\frac 1{p-1}
& \text{ when $p\in (1, n)$}\\ (\frac {\mu(\{0\})}{|\mathbb{S}^{n-1}|})^\frac 1{n-1} & \text{ when $p=n$}\endaligned\right.,
\end{equation} 
where
\begin{equation}\label{Equ:G-p}
G_p(x, x_0) =\left\{\aligned |x-x_0|^{-\frac {n-p}{p-1}} & \text{ when } p\in (1, n)\\
- \log |x-x_0| & \text{ when } p=n\endaligned\right..
\end{equation}
Moreover $u(x) \geq mG_p(x, x_0) - c_0$ for some $c_0$ and all $x$ in a neighborhood of $x_0$.
\end{theorem-intr}

Consequently, parallel to Corollary \ref{Cor:wolff-p-thin-intr}, we conclude,

\begin{corollary-intr} A subset $E\subset \mathbb{R}^n$ is 
$p$-thin for the singular behavior at the origin if and only if there is a $p$-superharmonic function $u(x)$ such that
$$
\lim_{x\to 0 \text{ and } x\in E} \frac {u(x)}{|x|^{-\frac {n-p}{p-1}}} > \liminf_{x\to 0}\frac {u(x)}{|x|^{-\frac {n-p}{p-1}}}.
$$
for $p\in (1, n)$. Similarly, a subset $E\subset \mathbb{R}^n$ is 
$n$-thin for the singular behavior at the origin if and only if there is a $n$-superharmonic function $u(x)$ such that
$$
\lim_{x\to 0 \text{ and } x\in E} \frac {u(x)}{\log \frac 1{|x|}} > \liminf_{x\to 0}\frac {u (x)}{\log \frac 1{|x|}}.
$$
\end{corollary-intr}

As applications, we derive the asymptotics of solutions to a class of fully nonlinear elliptic equations (cf. Corollary 
\ref{Cor:app-fully-nonlinear}) and extend \cite[Theorem 3.6]{Lab02} (please see Corollary \ref{Cor:sigma-k-asymptotic}).

\begin{theorem-intr}\label{Thm:sigma-k-intr}
Suppose that $u$ is nonnegative and that $u\in C^2 (\Omega\setminus S)$ for a
compact subset $S$ inside a bounded domain $\Omega$ in $\mathbb{R}^n$. And suppose 
$\lim_{x\to S}u(x) = +\infty.$
Assume $-\lambda (D^2 u(x)) \in \Gamma^k$ for $ 1\leq k \leq \frac n2$, where $\Gamma^k=\{\lambda\in\mathbb{R}^n: \sigma_1(\lambda)\geq 0,
\cdots, \sigma_k(\lambda)\geq 0\}$. Then $S$ is of Hausdorff dimension not greater than $n-p_\Gamma$ and, 
for $x_0\in S$, there are a subset $E$ that is $p_{\Gamma^k}$-thin 
for the singular behavior at $x_0$ and a nonnegative number $m$ such that
$$
\lim_{x\to x_0 \text{ and } x\notin E}  \frac {u(x)}{\mathcal{G}^k(x, x_0)} = m,
$$
where
$$
p_{\Gamma^k}  = \frac {n(k-1)}{n-k} + 2 \in [2, n]
$$
for $1\leq k \leq \frac n2$ and
$$
\mathcal{G}^k(x, x_0) = \left\{\aligned |x-x_0|^{2 - \frac nk} & \text{ when } 1\leq k < \frac n2\\
- \log |x-x_0| & \text{ when } k = \frac n2\endaligned\right..
$$
Moreover $u(x) \geq m \mathcal{G}^k(x, x_0) - c_0$ in some neighborhood of $x_0$. 
\end{theorem-intr}

 
 \subsection{Organization} The rest of the paper is organized as follows: In Section \ref{Sec:asymptotic-wolff}, we recall definitions and basic facts in nonlinear
 potential theory. Moreover, we prove Theorem \ref{Thm:asymptotic-wolff-intr} and Theorem \ref{Thm:fine-topology-intr} on the asymptotic behavior of the
 Wolff potentials at singularities under the $p$-fine topology. In Section \ref{Sec:asymptotics-p-superharmonic}, we first prove 
 Theorem \ref{Thm:main-asymptotic-intr} and then prove Theorem \ref{Thm:sigma-k-intr} as applications of Theorem \ref{Thm:main-asymptotic-intr} 
 following Lemma \ref{Lem:calculate-p-index} and the calculation of $p_{\Gamma^k}$.


\section{On nonlinear potential theory and $p$-Laplace equations}\label{Sec:asymptotic-wolff}

In this section, we start with definitions and collect some relevant facts in the potential theory for the study of $p$-superharmonic functions. 
We pay particular attention to the distinct definitions of $p$-thinness for continuity and the singular behavior (cf. Definition \ref{Def:p-thin-wiener} 
and Definition \ref{Def:quasi-p-thin}). 
We then prove the analogue of \cite[Theorem 5.3 and Theorem 6.3 in Chapter 2]{Mi96} on the asymptotic behavior of the Wolff 
potentials at singularities (cf. Theorem \ref{Thm:wolff potential upper bdd-singualarity}).  We are also able to confirm that the notion 
of $p$-thinness for the singular behavior in Definition \ref{Def:quasi-p-thin} 
indeed induces the $p$-fine topology by establishing the analogue of \cite[Theorem 5.4 in Section 2.5]{Mi96} (cf. Theorem \ref{Thm:converse-p-thin}). 
Our main references for background in nonlinear potential theory and $p$-Laplace equations are \cite{HeKi, KM92, HKM, KM94, Mi96, AH96, Lind06, PV08, Vn17}.

\subsection{Definitions and basics in nonlinear potential theory}\label{Subsec:intr-p-wolff}

First we want to give definitions of $p$-harmonic and  $p$-superharmonic functions. In this subsection we always assume $1<p\leq n$ unless specified otherwise.
It is good to recall what is the $p$-Laplace operator
\begin{equation*}
\Delta_p u = \text{div}(|\nabla u|^{p-2}\nabla u).
\end{equation*}

\begin{definition}\label{Def:p-harmonic} (\cite[Definition 2.5]{Lind06})
We say that $u\in W^{1,p}_{\rm{loc}}(\Omega)$ is a weak solution of the $p$-harmonic equation in $\Omega$, if 
\begin{equation*}
\int \langle |\nabla u|^{p-2}\nabla u,\nabla \eta \rangle dx=0
\end{equation*}
for each $\eta\in C_0 ^{\infty}(\Omega)$. If, in addition, $u$ is continuous, we then say $u$ is a $p$-harmonic function.
\end{definition}

\begin{definition}\label{Def:p-superhar} (\cite[Definiton 5.1]{Lind06})
A function $v:\Omega\to (-\infty,\infty]$ is called $p$-superharmonic in $\Omega$, if 
\begin{itemize}
\item $v$ is lower semi-continuous in $\Omega$;
\item $v\not\equiv\infty$ in $\Omega$;
\item for each domain $D\subset\subset\Omega$ the comparison principle holds, that is, 
if $h\in C(\bar{D})$ is $p$-harmonic in $D$ and $h|_{\partial D}\le v|_{\partial D}$, then $h\le v$ in D. 
\end{itemize}
\end{definition}

As stated in \cite[Theorem 2.1]{KM92}, for any $p$-superharmonic function $u$ in $\Omega$, there is a nonnegative Radon 
measure $\mu$ in $\Omega$ such that
\begin{equation}\label{Equ:p-laplace-measure}
-\Delta_p u=\mu.
\end{equation}
We want to mention the following local summability for $p$-superharmonic functions 
that will be used in the proof of Lemma \ref{Lem:point-charge} in Section \ref{Subsec:arsove-huber}.

\begin{lemma}\label{Lem:l^p-estimate} (\cite[Theorem 5.11 and 5.15]{Lind06}) Let $p\in (1, n]$ and $u$ is a nonnegative 
$p$-superharmonic function satisfying \eqref{Equ:p-laplace-measure} for a finite nonnegative Radon measure $\mu$ on $\Omega$. 
And let $B(2R)\subset \Omega$ be a ball. Then
\begin{equation}\label{Equ:u-l^q}
(\int_{B(R)} u^s dx)^\frac 1s \leq C (n, p, s, R) {\rm ess}\inf_{B(R)} u
\end{equation}
for all $s\in (0, \frac{n(p-1)}{n-p})$ and 
\begin{equation}\label{Equ:gradient-l^q}
(\int_{B(R)} |\nabla u|^q dx)^\frac 1q \leq C (n, p, q, R) \text{ess}\inf_{B(R)} u
\end{equation}
for any $q\in (0, \frac {n(p-1)}{n-1})$.
\end{lemma}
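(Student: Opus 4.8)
The first estimate \eqref{Equ:u-l^q} is the classical weak Harnack inequality for nonnegative $p$-supersolutions, and the plan is to reduce to it and then bootstrap it up to the gradient bound \eqref{Equ:gradient-l^q}. First I would reduce to the case that $u$ is bounded: the truncations $u_k=\min(u,k)$ are again $p$-superharmonic by Definition \ref{Def:p-superhar} and are locally bounded, hence $u_k\in W^{1,p}_{\rm loc}(\Omega)$ and $u_k$ is a weak supersolution, $\int\langle|\nabla u_k|^{p-2}\nabla u_k,\nabla\phi\rangle\,dx\ge 0$ for all $0\le\phi\in C_0^\infty(\Omega)$. For such bounded supersolutions \eqref{Equ:u-l^q} is obtained by Moser iteration: testing with $\eta^p(u_k+\varepsilon)^{-\gamma}$, $\gamma>0$, produces reverse H\"older inequalities for powers of $u_k+\varepsilon$ in the low range, testing with $\eta^p(u_k+\varepsilon)^{1-p}$ shows $\log(u_k+\varepsilon)\in\mathrm{BMO}_{\rm loc}$, and the John--Nirenberg lemma bridges the two ranges across the exponent $0$; the Sobolev embedding together with scaling pins the admissible range to $s<\frac{n(p-1)}{n-p}$, which is exactly the integrability exponent of the $p$-fundamental solution $|x|^{-(n-p)/(p-1)}$ (and all $s$ when $p=n$). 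Applying this to $u_k$ (and to $u_k+\varepsilon$) and letting $k\to\infty$ by monotone convergence yields \eqref{Equ:u-l^q}; alternatively one simply invokes \cite[Theorem 5.11]{Lind06}.

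For \eqref{Equ:gradient-l^q} I would work with $u_k$ and the positive shift $u_k+\varepsilon$, and first record a Caccioppoli-type inequality: testing the supersolution inequality for $u_k$ with $\phi=\eta^p(u_k+\varepsilon)^{-\gamma}$, where $\gamma\in(0,p-1)$, and absorbing the gradient term by Young's inequality gives
\[
\int_{B(R)}|\nabla u_k|^p(u_k+\varepsilon)^{-\gamma-1}\,dx\ \le\ \frac{C(p,\gamma)}{R^p}\int_{B(2R)}(u_k+\varepsilon)^{p-1-\gamma}\,dx,
\]
with $\eta$ a standard cutoff equal to $1$ on $B(R)$, supported in $B(2R)$, and $|\nabla\eta|\le C/R$ (and, modulo the routine shrinking of radii that makes a constant depend on $R$, one keeps all balls inside $\Omega$).

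Finally I would interpolate. Since $q<\frac{n(p-1)}{n-1}\le p$, H\"older's inequality with exponents $p/q$ and $p/(p-q)$ gives
\[
\int_{B(R)}|\nabla u_k|^q\,dx\ \le\ \Big(\int_{B(R)}|\nabla u_k|^p(u_k+\varepsilon)^{-\gamma-1}\,dx\Big)^{q/p}\Big(\int_{B(R)}(u_k+\varepsilon)^{\frac{(\gamma+1)q}{p-q}}\,dx\Big)^{\frac{p-q}{p}}.
\]
Now choose $\gamma>0$ small enough that both exponents $p-1-\gamma$ and $s:=\frac{(\gamma+1)q}{p-q}$ lie strictly below $\frac{n(p-1)}{n-p}$ when $p<n$ (there is no constraint when $p=n$); this is possible precisely because $q<\frac{n(p-1)}{n-1}$, since as $\gamma\to0^+$ the binding condition $\frac{q}{p-q}<\frac{n(p-1)}{n-p}$ is equivalent to $q(n-1)<n(p-1)$. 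Inserting the Caccioppoli bound and applying \eqref{Equ:u-l^q} to $u_k+\varepsilon$ in both factors, the powers of $\text{ess}\inf_{B(R)}(u_k+\varepsilon)$ produced are $(p-1-\gamma)\tfrac{q}{p}$ and $s\cdot\tfrac{p-q}{p}=(\gamma+1)\tfrac{q}{p}$, which add to exactly $q$; hence $\int_{B(R)}|\nabla u_k|^q\,dx\le C(n,p,q,R)\,(\text{ess}\inf_{B(R)}(u_k+\varepsilon))^q$. Letting $\varepsilon\to0$ and then $k\to\infty$ — so that $|\nabla u_k|^q=|\nabla u|^q\mathbf{1}_{\{u<k\}}$ increases a.e.\ to $|\nabla u|^q$ and $\text{ess}\inf_{B(R)}u_k\le\text{ess}\inf_{B(R)}u$ — yields \eqref{Equ:gradient-l^q}.

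The main obstacle is the weak Harnack inequality \eqref{Equ:u-l^q} itself: the Moser iteration together with the John--Nirenberg crossover and the sharpness of the exponent $\frac{n(p-1)}{n-p}$. By comparison the passage from \eqref{Equ:u-l^q} to \eqref{Equ:gradient-l^q} is essentially bookkeeping — choosing $\gamma$ so the interpolation exponents land in the admissible range — together with the care needed to justify the truncation limits for a possibly unbounded $p$-superharmonic function, whose gradient is only an a.e.-defined object until \eqref{Equ:gradient-l^q} has been established.
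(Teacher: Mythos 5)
Your proposal is correct: the paper offers no proof of this lemma, citing it directly from \cite[Theorems 5.11 and 5.15]{Lind06}, and your argument — weak Harnack via Moser iteration with the John--Nirenberg crossover, truncation $u_k=\min(u,k)$, the Caccioppoli inequality from the test function $\eta^p(u_k+\varepsilon)^{-\gamma}$, and the H\"older interpolation whose exponent bookkeeping pins down $q<\frac{n(p-1)}{n-1}$ — is exactly the standard proof given in that reference. The only loose end, the mismatch of radii between the Caccioppoli and weak Harnack balls, is the routine covering/rescaling adjustment you already flagged, so nothing essential is missing.
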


For more about $p$-superharmonic functions and nonlinear potential theory, we refer to  \cite{HeKi, KM92, HKM, KM94, PV08, AH96, Lind06, Vn17} and
references therein. One important tool is the Wolff potentials which in some way substitute the Riesz potentials in linear potential theory

\begin{equation}\label{Equ:wolff-potential}
W^{\mu}_{1,p}(x,\, r)=\int_0^r \left( \frac{\mu(B(x,t))}{t^{n-p}}\right)^{\frac 1 {p-1}}\frac {dt}{t}
\end{equation}
for any nonnegative Radon measure $\mu$ and $p\in (1, n]$. 
\\

We follow the variational approach to introduce the $p$-capacity in \cite[Chapters 2 and 4]{HKM} and \cite[Section 3.1]{KM94}.

\begin{definition}\label{Def:p-capacity} (\cite[Section 3.1]{KM94}) 
For a compact subset $K$ of a domain $\Omega$ in $\mathbb{R}^n$, we define
\begin{equation}\label{Equ:p-capacity-K}
cap_p(K,\Omega)=\inf \{\int_{\Omega}|\nabla u|^p dx: \text{ $u\in C_0^{\infty}(\Omega)$ and $u\ge 1$ on $K$}\}.
\end{equation}
Then $p$-capacity for arbitrary subset $E$ of $\Omega$ is 
\begin{equation}\label{Equ:p-capacity}
cap_p(E,\Omega)=\inf_{ E \subset G \text{ \& }
G  \subset\Omega \, \text{open}} \ \ \sup_{K\subset G \, \text{compact}} cap_p(K,\Omega).
\end{equation}
\end{definition}

We would like to mention some basic facts about the capacity $cap_p(E, \Omega)$ which are important for the arguments in this paper. Readers
are referred to \cite{HKM, KM94, AH96}, for example.

\begin{lemma}\label{Lem:subadditivity-Lip}\quad

\begin{enumerate}
\item Countable sub-additivity: 
$$
cap_p(\bigcup_k E_k, \Omega) \leq \sum_k cap_p(E_k, \Omega)
$$
for a countable collection of subsets $\{E_k\}$ of $\Omega$.
\item Scaling property:
$$
cap_p(E_\lambda, \Omega_\lambda) = \lambda^{n-p} cap_p(E, \Omega)
$$
where $A_\lambda = \{\lambda x\in \mathbb{R}^n: x\in A\}$ is the scaled set of $A \subset\mathbb{R}^n$.
\item Lipschitz property: Let 
$$
\Phi:\mathbb{R}^n \to \mathbb{R}^n
$$
be a Lipschitz map, that is, there is a constant $C_0>0$ such that 
$$
|\Phi(x)-\Phi(y)| \leq C_0|x-y| \text{ for all $x, y\in \mathbb{R}^n$}.
$$
Then, there is a constant $C>0$ such that
$$
cap_p (\Phi(E), \mathbb{R}^n) \leq C cap_p(E, \mathbb{R}^n).
$$
\item Positivity:
$$
cap_p(\partial B(0, r), B(0, 2r)) = c(n, p) r^{n-p} >0.
$$
\end{enumerate}
\end{lemma}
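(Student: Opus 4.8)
To prove Lemma~\ref{Lem:subadditivity-Lip}, the plan is to treat the four assertions in turn, in each case first reducing to compact sets and then invoking standard facts from nonlinear potential theory (\cite{HKM, KM94, AH96}). Two such facts are used throughout. First, the infimum in \eqref{Equ:p-capacity-K} is unchanged if one admits competitors that are merely locally Lipschitz, compactly supported in $\Omega$, and satisfy $u\ge 1$ on a neighborhood of $K$: from such a $u$ the smoothing $(1-\delta)^{-1}\rho_\delta\ast u$ (with $\delta$ small and $\rho_\delta$ a standard mollifier) is an admissible competitor whose gradient has no larger $L^p$ norm. Second, Definition~\ref{Def:p-capacity} is built in two layers: setting $cap_p(G,\Omega):=\sup\{cap_p(K,\Omega):K\subset G\text{ compact}\}$ for open $G$, one has $cap_p(E,\Omega)=\inf\{cap_p(G,\Omega):E\subset G\subset\Omega,\ G\text{ open}\}$ for arbitrary $E$, so an identity or inequality for compact sets that survives $\sup$ over compact subsets and $\inf$ over open supersets automatically holds in general.

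For part (1) I would first prove subadditivity for two compacta $K_1,K_2$: choose near-optimal admissible $u_i$ for $K_i$; then $v:=\max(u_1,u_2)$ is admissible for $K_1\cup K_2$, and since $\nabla v$ agrees a.e.\ with $\nabla u_1$ on $\{u_1\ge u_2\}$ and with $\nabla u_2$ on $\{u_1<u_2\}$, one has $|\nabla v|^p\le|\nabla u_1|^p+|\nabla u_2|^p$ a.e., hence $cap_p(K_1\cup K_2,\Omega)\le cap_p(K_1,\Omega)+cap_p(K_2,\Omega)$. Iterating handles finite unions of compacta; a compact $K\subset\bigcup_k G_k$ lies in finitely many $G_k$ and splits into compact pieces subordinate to them (shrinking lemma), giving countable subadditivity for open sets; and for general $E_k$ one picks open $G_k\supset E_k$ with $cap_p(G_k,\Omega)\le cap_p(E_k,\Omega)+\varepsilon 2^{-k}$ and uses $\bigcup_k E_k\subset\bigcup_k G_k$. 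For part (2) I would note that $u\mapsto u(\cdot/\lambda)$ is a bijection between admissible competitors for $(K,\Omega)$ and for $(K_\lambda,\Omega_\lambda)$ with $\int|\nabla(u(\cdot/\lambda))|^p\,dx=\lambda^{n-p}\int|\nabla u|^p\,dx$ by the change of variables $y=x/\lambda$; taking infima and passing through the two layers yields $cap_p(E_\lambda,\Omega_\lambda)=\lambda^{n-p}cap_p(E,\Omega)$. For part (4) I would use part (2) to reduce to $r=1$; positivity of $c(n,p):=cap_p(\partial B(0,1),B(0,2))$ follows by integrating the elementary bound $1\le u(\omega)=-\int_1^2\frac{d}{ds}u(s\omega)\,ds\le\int_1^2|\nabla u(s\omega)|\,ds$ over $\omega\in\mathbb{S}^{n-1}$ and applying H\"older in $(s,\omega)$ with weight $s^{n-1}$, which yields $|\mathbb{S}^{n-1}|\le C(n,p)\big(\int_{B(0,2)}|\nabla u|^p\,dx\big)^{1/p}$ and hence $c(n,p)>0$; the exact value is obtained by solving the radial $p$-harmonic condenser problem on $\{1<|x|<2\}$ with boundary values $1$ and $0$, as in \cite[Ch.~2]{HKM}.

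The delicate assertion, and the one I expect to be the main obstacle, is the Lipschitz property (3). By (1) and the two-layer definition it suffices to bound $cap_p(\Phi(K),\mathbb{R}^n)$ for compact $K$ (note $\Phi(K)$ is compact). When $\Phi$ is bi-Lipschitz this is routine, since $u\circ\Phi^{-1}$ is admissible for $\Phi(K)$ and a change of variables controls its energy; but for a merely Lipschitz $\Phi$ the naive composition runs the wrong way, so I would argue on the dual side. For $p\in(1,n)$ use the standard comparability (with constants depending only on $n,p$) of $cap_p(F,\mathbb{R}^n)$ with the supremum of $\mu(F)$ over nonnegative measures $\mu$ concentrated on $F$ with $W^\mu_{1,p}(x,\infty)\le 1$ for all $x$ \cite[Ch.~2]{AH96}. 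Given an admissible $\nu$ for $\Phi(K)$ in this sense, disintegrate it along the continuous surjection $\Phi\colon K\to\Phi(K)$ of compacta (Borel selection) to get a nonnegative measure $\mu$ supported in $K$ with $\Phi_\#\mu=\nu$, hence $\mu(K)=\nu(\Phi(K))$. The Lipschitz bound $\Phi(B(x,t))\subset B(\Phi(x),C_0t)$ gives $\mu(B(x,t))\le\nu(B(\Phi(x),C_0t))$, so after the substitution $s=C_0t$ one finds $W^\mu_{1,p}(x,\infty)\le C_0^{(n-p)/(p-1)}W^\nu_{1,p}(\Phi(x),\infty)\le C_0^{(n-p)/(p-1)}$; rescaling $\mu$ by $C_0^{-(n-p)}$ makes it admissible for $K$, so $cap_p(K,\mathbb{R}^n)\gtrsim C_0^{-(n-p)}\nu(\Phi(K))$, and taking the supremum over $\nu$ gives $cap_p(\Phi(K),\mathbb{R}^n)\le C(n,p,C_0)\,cap_p(K,\mathbb{R}^n)$. (For $p=n$, $cap_n(\cdot,\mathbb{R}^n)$ vanishes on bounded sets, so the inequality is trivial for compacta, and the Bessel-capacity normalization is handled the same way with a truncated Wolff potential.) The technical inputs here, namely the dual characterization of $p$-capacity by Wolff potentials and the Borel selection underlying the disintegration, are classical, but they carry the weight of the argument.
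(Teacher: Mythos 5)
Your proofs are correct, but note that the paper does not actually prove this lemma: its ``proof'' is a list of citations ((1) to \cite[Theorem 2.2]{HKM}, (2) and (3) declared easily verified with a pointer to \cite[Theorem 5.2.1]{AH96}, (4) to \cite[Example 2.12]{HKM}), so you are supplying the details rather than diverging from the paper. Your arguments for (1), (2) and (4) are exactly the standard ones behind those references --- $\max(u_1,u_2)$ plus the two-layer (compact/open) structure for subadditivity, change of variables for scaling, and the radial condenser computation together with the integrate-along-rays lower bound for positivity. The substantive point is (3): you are right that the ``easy'' variational argument (compose a competitor with $\Phi^{-1}$) only works for bi-Lipschitz maps, and that for a general Lipschitz map one must pass to the dual side, pull an admissible measure on $\Phi(K)$ back to $K$ via a Borel section of the continuous surjection $\Phi\colon K\to\Phi(K)$, and use $\mu(B(x,t))\le\nu(B(\Phi(x),C_0t))$ together with the substitution $s=C_0t$ in the Wolff potential; this is precisely the mechanism of \cite[Theorem 5.2.1]{AH96} (there phrased with Riesz/nonlinear potentials rather than Wolff potentials, which are comparable by Wolff's inequality), so the paper's phrase ``easily verified based on Definition \ref{Def:p-capacity}'' is, as you suspected, misleading. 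The only external inputs you lean on --- the Federer--Morse type measurable selection, the comparability of the variational capacity $cap_p(\cdot,\mathbb{R}^n)$ with the homogeneous Riesz capacity for $p<n$, and the boundedness of the capacitary Wolff potential --- are classical and correctly deployed, and your observation that the $p=n$ case of (3) is vacuous because $cap_n(\cdot,\mathbb{R}^n)\equiv 0$ is accurate.
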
 

\begin{proof} The proof and references are here: (1) is from \cite[Theorem 2.2]{HKM}; (2) is easily verified; 
(3) is easily been verified based on Definition \ref{Def:p-capacity} 
(see also \cite[Theorem 5.2.1]{AH96}); (4) has been explicitly computed, for instance, \cite[Eample 2.12]{HKM}.
\end{proof}

The capacity in Definition \ref{Def:p-capacity} (cf. \cite{HKM, KM94}) turns out to be equivalent 
to the dual capacity according to \cite[Theorem 3.5]{KM94}, at least for capacitable condensers.
More importantly, we have the following when dealing with capacitary functions and distributions.

\begin{lemma}\label{Lem:dual-capacity} (\cite[Corollary 3.8]{KM94}) Suppose that $\Omega$ is a bounded domain and $E\subset\subset\Omega$.
Let $\mu$ be the capacitary distribution of $E$ in $\Omega$. Then
$$
\mu(\Omega) = cap_p(E, \Omega).
$$
\end{lemma}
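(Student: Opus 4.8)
I would prove this exactly as the equilibrium (total mass) identity for the capacitary distribution, essentially following \cite[Section 3]{KM94}.

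\textbf{Setup.} Let $u_E$ be the capacitary potential of $E$ in $\Omega$, i.e. the lower semicontinuous regularized reduced function of the constant $1$ over $E$ relative to $\Omega$. Since $\Omega$ is bounded and $E\subset\subset\Omega$, the standard theory gives $u_E\in W_0^{1,p}(\Omega)$, $0\le u_E\le 1$, $u_E\ge 1$ quasi-everywhere on $E$, $u_E$ is $p$-harmonic in $\Omega\setminus\overline E$, and $u_E$ minimizes $v\mapsto\int_\Omega|\nabla v|^p\,dx$ over the closed convex set $\mathcal K=\{v\in W_0^{1,p}(\Omega):v\ge 1\text{ q.e. on }E\}$, with $\int_\Omega|\nabla u_E|^p\,dx=cap_p(E,\Omega)$. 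The capacitary distribution is the Riesz measure $\mu=-\Delta_p u_E\ge 0$ of $u_E$ in the sense of \eqref{Equ:p-laplace-measure}; since $u_E$ is $p$-harmonic off $\overline E$, we have $\operatorname{supp}\mu\subseteq\overline E$, and $\overline E$ is a compact subset of the open set $\Omega$.

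\textbf{Two identities via a cutoff.} Choose $\varphi\in C_0^\infty(\Omega)$ with $0\le\varphi\le 1$ and $\varphi\equiv 1$ on a neighborhood of $\overline E$. Testing the weak equation $-\Delta_p u_E=\mu$ against $\varphi$ gives
$$\int_\Omega \langle |\nabla u_E|^{p-2}\nabla u_E,\nabla\varphi\rangle\,dx=\int_\Omega\varphi\,d\mu=\mu(\Omega),$$
the last equality because $\varphi\equiv 1$ on $\operatorname{supp}\mu$. On the other hand, $\varphi\in\mathcal K$ (as $\varphi\equiv 1$ on $E$) and $2u_E-\varphi\in\mathcal K$ (on $E$ one has, q.e., $\varphi=1$ and $u_E\ge 1$, so $2u_E-\varphi\ge 1$), so the variational inequality for the minimizer $u_E$, namely $\int_\Omega\langle|\nabla u_E|^{p-2}\nabla u_E,\nabla(v-u_E)\rangle\,dx\ge 0$ for all $v\in\mathcal K$, applied to $v=\varphi$ and to $v=2u_E-\varphi$ forces $\int_\Omega\langle|\nabla u_E|^{p-2}\nabla u_E,\nabla(\varphi-u_E)\rangle\,dx=0$, i.e.
$$\int_\Omega \langle |\nabla u_E|^{p-2}\nabla u_E,\nabla\varphi\rangle\,dx=\int_\Omega|\nabla u_E|^p\,dx=cap_p(E,\Omega).$$
Comparing the two displays yields $\mu(\Omega)=cap_p(E,\Omega)$.

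\textbf{Where the work is.} The computation itself is short; everything rests on the foundational facts collected in the Setup: solvability and uniqueness of the obstacle problem producing $u_E$, the identification of the variational $p$-capacity of Definition \ref{Def:p-capacity} with the obstacle-problem minimum (which needs density of $C_0^\infty(\Omega)\cap\mathcal K$ in $\mathcal K$ together with lower semicontinuity of the $p$-energy), and the localization $\operatorname{supp}\mu\subseteq\overline E$ coming from interior $p$-harmonicity of $u_E$ off $\overline E$. For a general, possibly non-compact, $E\subset\subset\Omega$, one reduces to the compact case by Choquet capacitability, $cap_p(E,\Omega)=\sup\{cap_p(K,\Omega):K\subset E\text{ compact}\}$, passing to the limit in the capacitary potentials and their Riesz measures; this limiting step is the only genuinely delicate point, and it is precisely what \cite[Theorem 3.5 and Corollary 3.8]{KM94} establish, the identity also being immediate from the dual variational characterization of $cap_p$ proved there.
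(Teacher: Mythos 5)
The paper offers no proof of this lemma at all: it is quoted verbatim from \cite[Corollary 3.8]{KM94}, and its ingredients (that $\hat R_E\in W_0^{1,p}(\Omega)$ solves the obstacle problem with energy $cap_p(E,\Omega)$) are exactly what is restated here as Theorem \ref{Thm:capacitary-function}. Your argument is the standard variational proof and is correct: evaluating $\int_\Omega\langle|\nabla \hat R_E|^{p-2}\nabla \hat R_E,\nabla\varphi\rangle\,dx$ once through the Riesz measure together with $\operatorname{supp}\mu\subseteq\overline E$ (compact in $\Omega$, so the cutoff exists), and once through the two-sided variational inequality with the admissible competitors $\varphi$ and $2\hat R_E-\varphi$, yields the identity; the membership $2\hat R_E-\varphi\in\mathcal K$ is checked correctly. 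Be aware that your proof is a derivation of Lemma \ref{Lem:dual-capacity} \emph{from} Theorem \ref{Thm:capacitary-function} rather than from first principles — the solvability and variational characterization of the capacitary potential for a general $E\subset\subset\Omega$ of finite capacity is where the real work sits, and you correctly flag this; since the paper also takes \cite[Theorem 9.35]{HKM} as a black box, that division of labor matches the paper's own logical structure.
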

   
To construct the capacitary function and distribution of a condenser $(E, \Omega)$, one uses the theory of balayage. For us, the capacitary functions and distributions 
are important for understanding the fine topology for the asymptotic behavior of the Wolff potentials and $p$-superharmonic functions at singularities. We follow
\cite[Chapter 8]{HKM} in the rest of this subsection. First we recall
 
 \begin{definition}\label{Def:reduite} (\cite[Chapter 8]{HKM}) Let $\psi: \Omega\to (-\infty, +\infty]$ be locally bounded from below and 
 $$
 \mathcal{M}^\psi (\Omega) = \{u: u\geq \psi \text{ in $\Omega$, where $u$ is $p$-superharmonic function in $\Omega$}\}.
 $$
 Then the function
 $$
 R^\psi = R^\psi(\Omega) = \inf \mathcal{M}^\psi(\Omega)
 $$
 is called the r\'{e}duite and its lower semicontinuous regularization
 $ \hat R^\psi =\hat R^\psi(\Omega)$ is called the balayage of $\psi$. Here $\hat R^\psi \equiv \infty$ if $\mathcal{M}^\psi=\emptyset$.  
 For a subset $E\subset \Omega$ and  $\psi$ being the characteristic function of $E$ in $\Omega$, we call
 $\hat R_E = \hat R^\psi$ the $p$-potential of $E$ in $\Omega$.
 \end{definition}
 
 Then, as observed in \cite[Chapter 8]{HKM},
 
 \begin{theorem}\label{Thm:balayage-existence} (\cite[Theorem 8.1]{HKM}) The balayage $\hat R^\psi$ is $p$-superharmonic in $\Omega$.
 \end{theorem}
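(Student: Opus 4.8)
The plan is to realize $\hat R^\psi$ as the lower semicontinuous regularization of the limit of a \emph{decreasing} sequence of $p$-superharmonic functions and then to invoke the fundamental convergence theorem of nonlinear potential theory. The two inputs I will use are standard: first, the pointwise minimum of two $p$-superharmonic functions is again $p$-superharmonic; second, if $\{v_j\}$ is a decreasing sequence of $p$-superharmonic functions in $\Omega$ whose limit $v$ is not identically $+\infty$ on any component, then the lower semicontinuous regularization $\hat v$ is $p$-superharmonic (and $v=\hat v$ outside a set of $p$-capacity zero), see \cite{HKM}. Note that $\hat R^\psi$ is lower semicontinuous by construction, so by Definition \ref{Def:p-superhar} all that remains to be checked is that $\hat R^\psi\not\equiv\infty$ on each component and that it satisfies the comparison principle on relatively compact subdomains; both are immediate once the realization above is in place.

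If $\mathcal{M}^\psi(\Omega)=\emptyset$ then $\hat R^\psi\equiv\infty$ by the convention in Definition \ref{Def:reduite} and there is nothing to prove, so assume $\mathcal{M}^\psi(\Omega)\neq\emptyset$ and fix some $u_0$ in it. Since $u_0$ is $p$-superharmonic it is finite almost everywhere, hence $R^\psi\le u_0<\infty$ a.e. and therefore $\hat R^\psi\le R^\psi<\infty$ a.e.; in particular $\hat R^\psi\not\equiv\infty$ on any component. Because the minimum of two $p$-superharmonic functions is $p$-superharmonic and $\min\{u,u'\}\ge\psi$ whenever $u,u'\ge\psi$, the family $\mathcal{M}^\psi(\Omega)$ is closed under pairwise minima. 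Applying Choquet's topological lemma (cf. \cite{HKM}) to the family $\mathcal{M}^\psi(\Omega)$ on the second-countable space $\Omega$ yields a countable subfamily $\{u_j\}_{j\ge 1}\subset\mathcal{M}^\psi(\Omega)$ with
$$
\widehat{\inf_j u_j}=\widehat{\inf\mathcal{M}^\psi(\Omega)}=\hat R^\psi .
$$
Setting $v_j=\min\{u_1,\dots,u_j\}$ produces a decreasing sequence in $\mathcal{M}^\psi(\Omega)$ with $\lim_j v_j=\inf_j u_j=:v$; moreover $v\ge\psi$ is locally bounded below by the hypothesis on $\psi$ in Definition \ref{Def:reduite}, and $v\le u_1<\infty$ a.e., so $v$ is not identically $+\infty$ on any component.

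It then remains to apply the fundamental convergence theorem to $\{v_j\}$: its lower semicontinuous regularization $\hat v$ is $p$-superharmonic in $\Omega$. Since $v=\inf_j u_j$, the Choquet reduction above gives $\hat v=\widehat{\inf_j u_j}=\hat R^\psi$, whence $\hat R^\psi$ is $p$-superharmonic. I expect the only real obstacle to be the fundamental convergence theorem itself, i.e. checking that the comparison principle of Definition \ref{Def:p-superhar} survives both the passage to the decreasing limit and the lower semicontinuous regularization; this rests on Harnack-type stability estimates for $p$-harmonic functions together with the fact that sets of $p$-capacity zero are negligible for $p$-superharmonic functions, and I will quote it from \cite{HKM} rather than reprove it.
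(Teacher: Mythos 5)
Your argument is correct and is exactly the standard proof of the cited result: the paper itself gives no proof of Theorem \ref{Thm:balayage-existence}, only the reference to \cite[Theorem 8.1]{HKM}, and that reference proves it precisely as you do, via closure of $\mathcal{M}^\psi(\Omega)$ under pairwise minima, Choquet's topological lemma to extract a decreasing sequence, and the fundamental convergence theorem for decreasing sequences of $p$-superharmonic functions. The only caveat is the degenerate case $\mathcal{M}^\psi(\Omega)=\emptyset$, where $\hat R^\psi\equiv\infty$ is excluded from being $p$-superharmonic by Definition \ref{Def:p-superhar}; like the source, the statement implicitly assumes this case away, and your "nothing to prove" should be read in that spirit.
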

 
 And more importantly, 
 
 \begin{theorem}\label{Thm:capacitary-function} (\cite[Theorem 9.35]{HKM}) Let $E$ be a subset of a bounded domain $\Omega$ with finite capacity 
 $cap_p(E, \Omega)$. And let $\hat R_E$ be the $p$-potential of $E$ in $\Omega$. Then
 \begin{equation}\label{Equ:capacitary-function}
 cap_p(E, \Omega) = \int_\Omega |\nabla \hat R_E|^p dx
 \end{equation}
 where $\hat R_E\in H^{1, p}_0(\Omega)$. Therefore $\hat R_E$ is the capacitary function and 
 $-\Delta_p \hat R_E$ is the capacitary distribution of the subset $E$ in $\Omega$.
 \end{theorem}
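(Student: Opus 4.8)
The plan is to prove the energy identity first for compact sets, where $\hat R_K$ is the solution of a $p$-Dirichlet obstacle problem, and then to extend it to arbitrary subsets by monotone approximation of the balayage.

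\textbf{Step 1: the compact case.} For a compact $K\subset\Omega$ with $cap_p(K,\Omega)<\infty$, I would consider the convex, closed, nonempty set $\mathcal{K}_K=\{v\in H^{1,p}_0(\Omega):\ v\ge 1\ \text{$p$-quasi-everywhere on }K\}$ and minimize $v\mapsto\int_\Omega|\nabla v|^p\,dx$ over it. Coercivity, weak lower semicontinuity and strict convexity of the $p$-energy give a unique minimizer $u_K$. Replacing $u_K$ by $\min(\max(u_K,0),1)$ cannot increase the energy, so $0\le u_K\le 1$, and hence $u_K=1$ q.e. on $K$. Since variations supported in $\Omega\setminus K$ are unconstrained, $u_K$ is $p$-harmonic, in particular continuous, there; and the variational inequality shows that $\nu_K:=-\Delta_p u_K$ is a nonnegative Radon measure carried by $K$, so $u_K$ is $p$-superharmonic. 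The standard development of balayage (Theorem \ref{Thm:balayage-existence} and \cite[Chapter 8]{HKM}) then identifies $u_K$ with the $p$-potential $\hat R_K$ of Definition \ref{Def:reduite}: on one hand $u_K$ is $p$-superharmonic with $u_K\ge\chi_K$ q.e. (where $\chi_K$ is the characteristic function of $K$), so $u_K\ge\hat R_K$; on the other hand $\hat R_K$ is itself $p$-superharmonic, lies in $H^{1,p}_0(\Omega)$ with $\hat R_K=1$ q.e. on $K$, hence is admissible in $\mathcal{K}_K$, so $\int_\Omega|\nabla\hat R_K|^p\,dx\ge\int_\Omega|\nabla u_K|^p\,dx$, forcing $\hat R_K=u_K$ by uniqueness.

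\textbf{Step 2: the energy identity for compact sets.} Testing $-\Delta_p\hat R_K=\nu_K$ with $\hat R_K\in H^{1,p}_0(\Omega)$ gives $\int_\Omega|\nabla\hat R_K|^p\,dx=\int_K\hat R_K\,d\nu_K=\nu_K(\Omega)$, because $\hat R_K\equiv 1$ $\nu_K$-a.e. (the obstacle is active on the support of $\nu_K$, and $\nu_K$ charges no polar set). It remains to see that this common value equals $cap_p(K,\Omega)$: a mollification-and-truncation argument shows that the infimum in \eqref{Equ:p-capacity-K} over $C_0^\infty(\Omega)$ competitors with $u\ge 1$ on $K$ is unchanged if the competitors are allowed to range over $\mathcal{K}_K$, hence equals $\int_\Omega|\nabla\hat R_K|^p\,dx$. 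In particular $\nu_K(\Omega)=cap_p(K,\Omega)$, in accordance with Lemma \ref{Lem:dual-capacity}.

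\textbf{Step 3: arbitrary subsets.} For open $G\subset\subset\Omega$ with $cap_p(G,\Omega)<\infty$, take a compact exhaustion $K_1\subset K_2\subset\cdots\uparrow G$; then $\hat R_{K_j}\uparrow\hat R_G$ pointwise while $\int_\Omega|\nabla\hat R_{K_j}|^p\,dx=cap_p(K_j,\Omega)\uparrow\sup_j cap_p(K_j,\Omega)=cap_p(G,\Omega)$. A uniform energy bound with the monotone pointwise limit yields $\hat R_{K_j}\rightharpoonup\hat R_G$ weakly in $H^{1,p}_0(\Omega)$; lower semicontinuity of the energy and monotonicity of capacity pin $\int_\Omega|\nabla\hat R_G|^p\,dx=cap_p(G,\Omega)$, and convergence of the $p$-energies upgrades this to strong convergence of the gradients by uniform convexity of $L^p$ ($1<p<\infty$). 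Now for arbitrary $E$ with $cap_p(E,\Omega)<\infty$, the outer regularity built into \eqref{Equ:p-capacity} supplies open sets $G_1\supset G_2\supset\cdots\supset E$ with $cap_p(G_j,\Omega)\downarrow cap_p(E,\Omega)$; then $\hat R_{G_j}\downarrow\hat R_E$, the energies decrease to $cap_p(E,\Omega)$, and the same uniform-convexity argument gives $\hat R_{G_j}\to\hat R_E$ strongly in $H^{1,p}_0(\Omega)$. Hence $\hat R_E\in H^{1,p}_0(\Omega)$ and $\int_\Omega|\nabla\hat R_E|^p\,dx=cap_p(E,\Omega)$; since $\hat R_E$ is $p$-superharmonic with $\hat R_E=1$ q.e. on $E$, it is the capacitary function and $-\Delta_p\hat R_E$, a nonnegative Radon measure of total mass $cap_p(E,\Omega)$, is the capacitary distribution.

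\textbf{Main obstacle.} The delicate part is Step 3: keeping $\hat R_E$ inside $H^{1,p}_0(\Omega)$ and obtaining \emph{strong}, not merely weak, convergence of the gradients along the monotone approximations. This is exactly where uniform convexity of $L^p$ and the outer-capacitability structure of $cap_p$ enter, and where one must read every constraint ``$v\ge 1$ on $K$ (or on $E$)'' $p$-quasi-everywhere, since $H^{1,p}_0$ functions have pointwise values only up to sets of $p$-capacity zero. A secondary technical point, used in Step 2, is the equivalence between the $C_0^\infty$-based capacity \eqref{Equ:p-capacity-K} and the obstacle-problem value over $\mathcal{K}_K$.
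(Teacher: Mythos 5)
This theorem is imported verbatim from \cite[Theorem 9.35]{HKM}; the paper offers no proof of its own, so there is nothing internal to compare your argument against. What you have written is, in outline, the standard development from that reference (obstacle problem for compact sets, identification of the minimizer with the balayage, monotone approximation inward for open sets and outward for arbitrary sets), and the overall architecture is right. Two places, however, are circular or incomplete as written, and both sit exactly at the points you flag as delicate.

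First, in Step 1 the reverse inequality $\int_\Omega|\nabla\hat R_K|^p\,dx\ge\int_\Omega|\nabla u_K|^p\,dx$ is obtained by declaring $\hat R_K$ admissible in $\mathcal{K}_K$, which presupposes $\hat R_K\in H^{1,p}_0(\Omega)$ --- part of what is to be proved. The non-circular route is to show $R_K\le v$ for every $v\in\mathcal{M}^{\chi_K}(\Omega)$ \emph{and} $u_K\le v$ by comparison on $\Omega\setminus K$ (where $v\ge1\ge u_K$ near $K$ and $v\ge0$ near $\partial\Omega$ in the Sobolev sense), so that $\hat R_K\le u_K$; the opposite inequality $u_K\ge\hat R_K$ then needs the fact that the r\'eduite in Definition \ref{Def:reduite} is insensitive to modifying $\chi_K$ on a polar set, since your minimizer only satisfies $u_K\ge1$ quasi-everywhere on $K$ while $\mathcal{M}^{\psi}$ demands the inequality everywhere. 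Second, in Step 3 the uniform-convexity argument does not close the arbitrary-$E$ case: weak lower semicontinuity yields only $\int_\Omega|\nabla\hat R_E|^p\,dx\le\lim_j cap_p(G_j,\Omega)=cap_p(E,\Omega)$, and to upgrade weak to strong convergence you must already know that $\int_\Omega|\nabla\hat R_E|^p\,dx$ equals (not merely is bounded by) $cap_p(E,\Omega)$. The missing inequality $\ge$ is precisely the statement that the outer capacity \eqref{Equ:p-capacity} agrees with $\inf\{\int_\Omega|\nabla v|^p\,dx:\ v\in H^{1,p}_0(\Omega),\ v\ge1\ \text{q.e.\ on }E\}$, i.e.\ the Choquet-capacitability/Sobolev-capacity equivalence of \cite[Chapters 2 and 4]{HKM}; it must be invoked or proved separately, not extracted from the convexity argument. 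A smaller point in the same step: $\hat R_{G_j}\downarrow\hat R_E$ requires the $G_j$ to be cofinal among \emph{all} open supersets of $E$ (arranged by intersecting them with the open sets $\{u>1-\epsilon\}$ generated by competitors $u\in\mathcal{M}^{\chi_E}$), not merely that $cap_p(G_j,\Omega)\downarrow cap_p(E,\Omega)$.
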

 

\subsection{$p$-superharmonic functions and the Wolff potentials}
The fundamental estimates in terms of the Wolff potential in the study of $p$-superharmonic
functions is the following:

\begin{theorem}\label{Thm:main-use-wolff} (\cite[Theorem 1.6]{KM94}) 
Suppose that $u$  is a nonnegative $p$-superharmonic function satisfying \eqref{Equ:p-laplace-measure} for a nonnegative 
finite Radon measure $\mu$ in $B(x, 3r)$. Then
\begin{equation}\label{Equ: main-use-wolff}
c_1 W^{\mu}_{1,p}(x, r)\le u(x)\le c_2(\inf_{B(x, r)}u+W^{\mu}_{1,p}(x, 2r))
\end{equation}
for some constants $c_1(n,p)$ and $c_2(n,p)$ for $p\in (1, n]$.
\end{theorem}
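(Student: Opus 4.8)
The plan is to reproduce the Kilpel\"ainen--Mal\'y argument, which runs through a dyadic decomposition of $B(x,3r)$ at the scales $\rho_j=2^{-j}r$, $B_j=B(x,\rho_j)$; after a translation we may take $x=0$. Throughout I would use that a nonnegative $p$-superharmonic $u$ has $u(0)=\lim_{j\to\infty}\operatorname*{ess\,inf}_{B_j}u$ (its canonical representative is the essential lower limit), the Caccioppoli inequalities obtained by testing $-\Delta_p u=\mu$ against $\varphi^p$ times a truncation of $u$, and the weak Harnack inequality \eqref{Equ:u-l^q} of Lemma \ref{Lem:l^p-estimate}. For the \emph{lower} bound $c_1W^\mu_{1,p}(0,r)\le u(0)$ I would argue scale by scale: put $m_j=\operatorname*{ess\,inf}_{B_j}u$; on $B_j$ the function $u-m_j$ is nonnegative and $p$-superharmonic with the same measure $\mu$, so by the comparison principle $u-m_j\ge v_j$ in $B_j$, where $v_j$ solves $-\Delta_p v_j=\mu\lfloor B_{j+1}$ in $B_j$ with zero boundary values (solvable by \cite[Theorem 2.4]{KM92}). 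Such a localized solution satisfies the standard two-sided bound $\operatorname*{ess\,inf}_{B_{j+1}}v_j\simeq(\mu(B_{j+1})/\rho_j^{n-p})^{1/(p-1)}$ --- the upper half from the energy identity $\int|\nabla\min(v_j,M)|^p=M\,\mu(B_{j+1})$ and the positivity $cap_p(\overline{B_{j+1}},B_j)\simeq\rho_j^{n-p}$ of Lemma \ref{Lem:subadditivity-Lip}(4) (via Lemma \ref{Lem:dual-capacity} and Theorem \ref{Thm:capacitary-function}), the lower half from the weak Harnack inequality together with a comparison of $v_j$ with a multiple of the $p$-capacitary potential of $\overline{B_{j+1}}$. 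This yields the recursion
\[
m_{j+1}\ \ge\ m_j\ +\ c\Big(\frac{\mu(B_{j+1})}{\rho_j^{n-p}}\Big)^{\frac{1}{p-1}},
\]
and summing over $j\ge0$, discarding the nonnegative $m_0$, and comparing $\sum_j(\mu(B_{j+1})/\rho_j^{n-p})^{1/(p-1)}$ with the integral defining $W^\mu_{1,p}(0,r)$ (for $\rho_{j+1}\le t\le\rho_j$ one has $\mu(B_{j+1})\le\mu(B(0,t))$, $t\simeq\rho_j$, and $\int_{\rho_{j+1}}^{\rho_j}dt/t=\log 2$) gives $u(0)\ge c_1W^\mu_{1,p}(0,r)$.

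For the \emph{upper} bound $u(0)\le c_2(\inf_{B_r}u+W^\mu_{1,p}(0,2r))$ --- the delicate half, and the real content of \cite{KM94} --- the plan is a stopping-time/De Giorgi iteration carried out simultaneously in the scale and the level. Put $m=\inf_{B_r}u$, fix a large structural constant $A$, and define thresholds $t_j=m+A\sum_{i<j}(\mu(B_i)/\rho_i^{n-p})^{1/(p-1)}$ (now with base scale $\rho_0=2r$) and truncations $u_j=\min\big((u-t_j)_+,\,t_{j+1}-t_j\big)$. Testing $-\Delta_p u=\mu$ against $\varphi_j^p u_j$, where $\varphi_j$ is a cutoff equal to $1$ on $B_{j+1}$, supported in $B_j$, with $|\nabla\varphi_j|\lesssim\rho_j^{-1}$, one obtains
\[
\int_{B_{j+1}}|\nabla u_j|^p\,dx\ \lesssim\ (t_{j+1}-t_j)\,\mu(B_j)\ +\ \rho_j^{-p}(t_{j+1}-t_j)^p\,\big|\{u>t_j\}\cap B_j\big|.
\]
Feeding this into the Sobolev inequality for $u_j$ and running a De Giorgi iteration over the levels $t_j<s<t_{j+1}$ inside the scale $\rho_j$ produces, once $A$ is large enough, a decay of relative density $\rho_{j+1}^{-n}|\{u>t_{j+1}\}\cap B_{j+1}|\le\tfrac12\,\rho_j^{-n}|\{u>t_j\}\cap B_j|$; the choice $t_{j+1}-t_j\simeq A(\mu(B_j)/\rho_j^{n-p})^{1/(p-1)}$ is exactly what makes the measure term above negligible against the cutoff term. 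Hence $\rho_j^{-n}|\{u>t_j\}\cap B_j|\to0$; but if $u(0)>t_\infty=m+A\sum_{i\ge0}(\mu(B_i)/\rho_i^{n-p})^{1/(p-1)}$ then $\operatorname*{ess\,inf}_{B_j}u>t_j$ for all large $j$, forcing that relative density to equal $1$ infinitely often, a contradiction. Therefore $u(0)\le t_\infty$, and comparing this series with the integral defining $W^\mu_{1,p}(0,2r)$ gives $u(0)\le c_2(m+W^\mu_{1,p}(0,2r))$.

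The hard part will be the upper bound, and within it the calibration of the threshold increments $t_{j+1}-t_j$: they must be taken of exactly the order of the Wolff increments $(\mu(B_j)/\rho_j^{n-p})^{1/(p-1)}$ so that, after Caccioppoli and Sobolev, both the De Giorgi iteration at a fixed scale and the passage from scale $j$ to scale $j+1$ close with constants independent of $j$ and of $\mu$; balancing the measure term against the cutoff-gradient term in the Caccioppoli estimate, and interlocking the two iterations, is exactly where the whole subtlety of \cite{KM94} lies. By contrast the ingredients invoked --- the comparison principle for $p$-superharmonic functions, the Caccioppoli and Sobolev inequalities, the weak Harnack estimate of Lemma \ref{Lem:l^p-estimate}, the capacity facts of Lemmas \ref{Lem:subadditivity-Lip} and \ref{Lem:dual-capacity} and Theorem \ref{Thm:capacitary-function}, and the identification $u(0)=\lim_j\operatorname*{ess\,inf}_{B_j}u$ --- are standard, so no real difficulty is expected there.
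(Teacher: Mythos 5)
The paper does not prove this statement: Theorem \ref{Thm:main-use-wolff} is imported verbatim from \cite[Theorem 1.6]{KM94} (the lower estimate going back to \cite{KM92}), so there is no in-paper argument to compare yours against. Judged against the cited source, your outline is a faithful reconstruction of the Kilpel\"ainen--Mal\'y strategy: the lower bound by telescoping a single-scale gain $m_{j+1}-m_j\gtrsim(\mu(B_{j+1})/\rho_j^{n-p})^{1/(p-1)}$ over dyadic balls, and the upper bound by the calibrated threshold/truncation iteration, which is indeed where the substance of \cite{KM94} lies.

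Two caveats. First, your sketch hides real work in the step ``$u-m_j\ge v_j$ by the comparison principle'': for measure-data problems the comparison principle of Definition \ref{Def:p-superhar} does not apply directly, since $v_j$ is neither continuous up to $\partial B_j$ nor unique in general; one must construct $v_j$ as an increasing limit of solutions with regular data $\mu_k\nearrow\mu\lfloor B_{j+1}$ and compare at each stage. Second, you attach the two halves of $\operatorname{ess\,inf}_{B_{j+1}}v_j\simeq(\mu(B_{j+1})/\rho_j^{n-p})^{1/(p-1)}$ to the wrong tools: the half the recursion actually needs, namely the lower bound on $M=\operatorname{ess\,inf}_{B_{j+1}}v_j$, is the one that follows from the energy identity $\int|\nabla\min(v_j,M)|^p\le M\,\mu(B_{j+1})$ combined with $cap_p(\overline{B_{j+1}},B_j)\simeq\rho_j^{n-p}$, since $\min(v_j,M)/M$ is admissible for that capacity and hence $M^{p-1}\ge \mu(B_{j+1})/cap_p(\overline{B_{j+1}},B_j)$; the weak Harnack inequality serves the reverse inequality, which is not needed here. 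These points are repairable, and the remainder of the plan (Caccioppoli with threshold increments of exactly Wolff size, the density-decay iteration, and the contradiction with $\operatorname{ess\,inf}_{B_j}u>t_j$) has the correct shape of the \cite{KM94} proof, though, as you acknowledge, it is a plan rather than a proof.
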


It is often useful to have the following existence result in order to fully use the above estimates (cf. \cite[Theorem 1.6]{KM94}).

\begin{lemma}\label{Lem:existence-p-sh} (\cite[Theorem 2.4]{KM92})
Suppose $\Omega\subset\mathbb{R}^n$ is a bounded domain and $\mu$ is a nonnegative finite Radon measure. 
There is a nonnegative $p$-superharmonic function $u(x)$ satisfying \eqref{Equ:p-laplace-measure}
and $\min\{u,\lambda\} \in W^{1,p}_0(\Omega)$ for any $\lambda >0$.
\end{lemma}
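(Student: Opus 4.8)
The plan is to realize $u$ as a limit of solutions of regularized Dirichlet problems, in the spirit of \cite{KM92} and the measure-data techniques of Boccardo--Gallou\"et. First I would approximate $\mu$ by a sequence of nonnegative measures $\mu_j$ with bounded densities, chosen so that $\mu_j\rightharpoonup\mu$ weakly as measures and $\mu_j(\Omega)\le\mu(\Omega)$ (mollify $\mu$ and cut off near $\partial\Omega$ so that no mass escapes to the boundary). Since $\Omega$ is bounded, $\mu_j\in L^\infty(\Omega)$ lies in the dual $W^{-1,p'}(\Omega)$ of $W^{1,p}_0(\Omega)$, so monotone operator theory (Leray--Lions) gives a unique weak solution $u_j\in W^{1,p}_0(\Omega)$ of
\begin{equation*}
\int_\Omega|\nabla u_j|^{p-2}\nabla u_j\cdot\nabla\varphi\,dx=\int_\Omega\varphi\,d\mu_j\qquad\text{for all }\varphi\in W^{1,p}_0(\Omega);
\end{equation*}
comparison with $0$ (using $\mu_j\ge0$) gives $u_j\ge0$, and each $u_j$ is continuous by the regularity theory for bounded data, hence $p$-superharmonic.

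Next I would establish the uniform estimates that make the limit process work. Testing the equation with the truncation $T_\lambda(u_j):=\min\{u_j,\lambda\}$ gives the key bound
\begin{equation*}
\int_\Omega|\nabla T_\lambda(u_j)|^p\,dx=\int_\Omega T_\lambda(u_j)\,d\mu_j\le\lambda\,\mu(\Omega),
\end{equation*}
so $\{T_\lambda(u_j)\}_j$ is bounded in $W^{1,p}_0(\Omega)$ for every $\lambda>0$. Decomposing $\Omega$ over the dyadic level sets $\{2^k\le u_j<2^{k+1}\}$ and summing, one deduces in the familiar way the uniform bounds $\|\nabla u_j\|_{L^q(\Omega)}\le C$ for $q<\frac{n(p-1)}{n-1}$ and $\|u_j\|_{L^s(\Omega)}\le C$ for $s<\frac{n(p-1)}{n-p}$ when $p<n$ (all finite $s$ when $p=n$) --- the global counterparts of Lemma \ref{Lem:l^p-estimate}. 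Along a subsequence, then, $u_j\to u$ a.e.\ and in $L^1(\Omega)$, $\nabla u_j\rightharpoonup\nabla u$ weakly in $L^q(\Omega)$, and, for each $\lambda$, $T_\lambda(u_j)\rightharpoonup T_\lambda(u)$ weakly in $W^{1,p}_0(\Omega)$; in particular $T_\lambda(u)\in W^{1,p}_0(\Omega)$ for every $\lambda>0$, which is the $W^{1,p}_0$-truncation claim.

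It remains to identify the limiting equation and to check that $u$ itself is $p$-superharmonic. For the equation I would run the classical almost-everywhere gradient convergence argument: using (a cutoff of) $T_\lambda(u_j)-T_\lambda(u)$ as a test function and the strict monotonicity of $\xi\mapsto|\xi|^{p-2}\xi$ forces $\nabla u_j\to\nabla u$ a.e.\ in $\Omega$, hence $|\nabla u_j|^{p-2}\nabla u_j\to|\nabla u|^{p-2}\nabla u$ in $L^{q/(p-1)}_{\mathrm{loc}}(\Omega)$; combined with $\mu_j\rightharpoonup\mu$ this allows passage to the limit in the weak formulation against $\varphi\in C_0^\infty(\Omega)$, giving $-\Delta_p u=\mu$, i.e.\ \eqref{Equ:p-laplace-measure}. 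Finally, since $u$ is an $L^1_{\mathrm{loc}}$ limit of the $p$-superharmonic functions $u_j$, its lower semicontinuous regularization is $p$-superharmonic by the convergence theorems for $p$-superharmonic functions in \cite{HKM}, remains nonnegative, carries the same Riesz measure $\mu$, and agrees with $u$ off a null set so that its truncations still lie in $W^{1,p}_0(\Omega)$; relabeling it $u$ completes the proof.

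The main obstacle is the gradient-convergence step. For $p\le n$ a finite measure need not belong to $W^{-1,p'}(\Omega)$, so $u_j$ is only bounded in the sub-optimal space $W^{1,q}(\Omega)$ with $q<\frac{n(p-1)}{n-1}$, the energy identity ``at $\lambda=\infty$'' is unavailable, and one is driven to the delicate a.e.\ convergence of gradients on truncations in order to identify the limiting PDE. Two supporting points also need care: choosing the $\mu_j$ so that they converge weakly as measures without loading mass onto $\partial\Omega$, and ensuring $u_j\to u$ genuinely in $L^1(\Omega)$ rather than merely in measure --- which is where the uniform $L^s$ bound with exponent $s>1$ (valid precisely because $p>1$) enters.
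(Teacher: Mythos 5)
This lemma is quoted in the paper directly from \cite[Theorem 2.4]{KM92} and no proof is given there, so there is nothing internal to compare against; your proposal is a correct reconstruction of the standard argument by which the cited result is actually proved (approximation of $\mu$ by nice data, the truncation estimate $\int_\Omega|\nabla T_\lambda(u_j)|^p\,dx\le\lambda\,\mu(\Omega)$, Boccardo--Gallou\"et summation over dyadic level sets, and almost-everywhere convergence of gradients to identify the limit equation). One point deserves care for the full range $p\in(1,n]$ claimed in the lemma: when $1<p\le 2-\frac1n$ the exponent $\frac{n(p-1)}{n-1}$ is at most $1$, so the phrase ``$\nabla u_j\rightharpoonup\nabla u$ weakly in $L^q(\Omega)$'' is not meaningful and $u$ need not lie in $W^{1,1}_{\mathrm{loc}}(\Omega)$; in that regime the gradient of $u$ must be understood as the a.e.\ limit of $\nabla T_\lambda(u)$ as $\lambda\to\infty$ (which is how \cite{KM92} formulates \eqref{Equ:p-laplace-measure}), while the flux $|\nabla u_j|^{p-2}\nabla u_j$ is still bounded in $L^{q/(p-1)}$ with $q/(p-1)>1$, so the passage to the limit in the weak formulation goes through exactly as you describe. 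With that reading, the argument is sound.
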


The following is also very important to us.

\begin{lemma}\label{Lem:cap-estimate} (\cite[Lemma 3.9]{KM94}) 
Suppose that $u\in W_0^{1,p}(\Omega)$ is $p$-superharmonic functions. Then for $\lambda>0$ it holds
\begin{equation}\label{Equ:cap-estimate}
\lambda^{p-1}cap_p(\{x\in\Omega:u(x)>\lambda\},\Omega)\le\mu(\Omega).
\end{equation}
\end{lemma}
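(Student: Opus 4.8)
The plan is to use the truncation $v:=\lambda^{-1}\min(u,\lambda)$ as an admissible competitor for the $p$-capacity of the open set $G:=\{x\in\Omega:u(x)>\lambda\}$ (open because $u$, being $p$-superharmonic, is lower semicontinuous), and then to control $\int_\Omega|\nabla v|^p\,dx$ by testing the measure equation $-\Delta_p u=\mu$ against $v$ itself. Since $\lambda>0$ one may replace $u$ by $u^+$ without changing $G$, so assume $u\ge 0$; then $v\in W^{1,p}_0(\Omega)$ with $0\le v\le 1$ and $v\equiv 1$ on $G$. By the Sobolev-space description of the capacity of an open set (equivalently, by mollifying $v$ together with the open-set/compact-exhaustion reduction built into Definition \ref{Def:p-capacity} and \eqref{Equ:p-capacity-K}, a standard fact from \cite{HKM}), this gives $cap_p(G,\Omega)\le\int_\Omega|\nabla v|^p\,dx$.

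For the main estimate, note $\nabla v=\lambda^{-1}\mathbf 1_{\{u<\lambda\}}\nabla u$ a.e., so
$$\int_\Omega|\nabla v|^p\,dx=\lambda^{-p}\int_{\{u<\lambda\}}|\nabla u|^p\,dx=\lambda^{-(p-1)}\int_\Omega|\nabla u|^{p-2}\nabla u\cdot\nabla v\,dx .$$
Because $u\in W^{1,p}_0(\Omega)$ is $p$-superharmonic with $-\Delta_p u=\mu$ in the sense of \eqref{Equ:p-laplace-measure}, and $v=\lambda^{-1}\min(u,\lambda)$ is a bounded function in $W^{1,p}_0(\Omega)$, it is admissible in the weak identity $\int_\Omega|\nabla u|^{p-2}\nabla u\cdot\nabla\varphi\,dx=\int_\Omega\varphi\,d\mu$; since $0\le v\le 1$ the right-hand side is at most $\mu(\Omega)$. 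Hence $\int_\Omega|\nabla v|^p\,dx\le\lambda^{-(p-1)}\mu(\Omega)$, and combining with the previous step yields $cap_p(G,\Omega)\le\lambda^{-(p-1)}\mu(\Omega)$, i.e.\ $\lambda^{p-1}cap_p(\{u>\lambda\},\Omega)\le\mu(\Omega)$.

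The step I expect to be the main obstacle is the justification that the truncation $v=\lambda^{-1}\min(u,\lambda)$ may legitimately be inserted as a test function in $-\Delta_p u=\mu$, i.e.\ that $\int_\Omega|\nabla u|^{p-2}\nabla u\cdot\nabla v\,dx=\int_\Omega v\,d\mu$. A $p$-superharmonic function need not be a weak solution nor even locally in $W^{1,p}$, so this passage from the distributional to the integral identity uses precisely the hypothesis $u\in W^{1,p}_0(\Omega)$ (which makes $|\nabla u|^{p-2}\nabla u$ an $L^{p/(p-1)}$ vector field) and the standard fact that the truncations $\min(u,k)$ are $p$-supersolutions in $W^{1,p}_0$ approximating $u$; I would invoke the corresponding statements from \cite{KM92} and \cite{HKM} rather than reprove them. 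The secondary, routine point is the reduction of the $C_0^\infty$ definition of $cap_p$ in \eqref{Equ:p-capacity-K} to $W^{1,p}_0$ competitors for open sets, which is also taken from \cite{HKM}.
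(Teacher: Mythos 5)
Your argument is correct and is essentially the standard proof of this capacitary weak-type estimate: the paper itself gives no proof, citing \cite[Lemma 3.9]{KM94}, and your route (truncate to $v=\lambda^{-1}\min(u,\lambda)$, use $v$ as a capacity competitor for the open set $\{u>\lambda\}$, and test $-\Delta_p u=\mu$ with $v$) is exactly the argument in that reference. The two points you flag as needing justification (admissibility of bounded $W^{1,p}_0$ truncations as test functions, and the reduction of the $C_0^\infty$ capacity definition to $W^{1,p}_0$ competitors on open sets) are indeed the only nontrivial steps, and both are standard facts from \cite{HKM} and \cite{KM92}.
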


To tackle the lack of continuity of $p$-superharmonic functions, the following $p$-thinness was introduced, for example, in \cite{HKM, KM94}.

\begin{definition}\label{Def:p-thin-wiener}  Let $E\subset\mathbb{R}^n$ and $x_0\in \mathbb{R}^n$. For $p\in (1, n]$, we say $E$ is $p$-thin 
for continuity at $x_0$ according to the Wiener criterion if
\begin{equation}\label{Equ:p-thin-wiener} 
W_p(E, x_0) = \int_0^1 \left(\frac {cap_p (B(x_0, t)\cap E, B(x_0, 2t))}{cap_p (B(x_0, t), B(x_0, 2t))}\right)^\frac 1{p-1} \frac {dt}t < \infty.
\end{equation}
\end{definition}

One seminal result in \cite{KM94} is the confirmation that the notion of $p$-thinness for continuity indeed satisfies the so-called Cartan property and therefore 
induces the fine topology for continuity of $p$-superharmonic functions. 

\begin{theorem}\label{Thm:km94-cartan} (\cite[Theorem 1.3]{KM94}) Let $p\in (1, n]$. And 
let $E\subset \mathbb{R}^n$ and $x_0\in \bar E\setminus E$. Then $E$
is $p$-thin for continuity according to the Wiener criterion in Definition \ref{Def:p-thin-wiener} if and only if there is an $\mathcal{A}$-superharmonic function
$u$ such that
$$
\lim_{x\to x_0 \text{ and } x \in E} u(x) > u(x_0) = \liminf_{x\to x_0} u(x).
$$
\end{theorem}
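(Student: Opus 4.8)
Without loss of generality take $x_0=0$. There are two implications; the converse direction (existence of $u$ out of thinness) is the substantial one and rests on the quantitative form of the Wiener criterion, while the forward direction is a fairly direct consequence of the Wolff potential estimate in Theorem~\ref{Thm:main-use-wolff}.

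\textbf{Necessity ($\exists\,u\Rightarrow E$ is thin at $0$).} Suppose $u$ is $\mathcal{A}$-superharmonic with $\liminf_{x\to 0}u(x)=u(0)<\ell:=\lim_{E\ni x\to 0}u(x)$. After adding a constant, truncating ($\min\{u,\lambda\}$), and rescaling --- each of which preserves $\mathcal{A}$-superharmonicity --- we may assume $u\ge 0$ on a ball $B(0,R)$, $u\ge 1$ quasi-everywhere on $E\cap B(0,R)$, and $u(0)=\liminf_{x\to 0}u(x)\in[0,1)$. Then $u\in\mathcal{M}^{\chi_{E\cap B(0,R)}}(B(0,R))$ in the sense of Definition~\ref{Def:reduite}, so $u\ge\hat R_{E\cap B(0,R)}=:P$ on $B(0,R)$, and in particular $P(0)\le u(0)<1$. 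By Theorem~\ref{Thm:capacitary-function} and Lemma~\ref{Lem:dual-capacity} the capacitary distribution $\nu=-\Delta_p P$ satisfies $\nu(B(0,R))=cap_p(E\cap B(0,R),B(0,R))$, and one has the standard scale-by-scale lower bound $\nu(\bar B(0,t))\ge c\,cap_p(E\cap B(0,t),B(0,2t))$ for $t\le R/3$. Applying the left inequality of \eqref{Equ: main-use-wolff} to $P$ at $0$ gives $P(0)\ge c_1\,W^{\nu}_{1,p}(0,r)$; combining with the previous estimate on $\nu$ and with $cap_p(B(0,t),B(0,2t))=c(n,p)\,t^{n-p}$ from Lemma~\ref{Lem:subadditivity-Lip}(4) yields $W^{\nu}_{1,p}(0,r)\ge c\,W_p(E,0)-C$ (the tail of the Wiener integral over $(r,1)$ being trivially bounded since the capacity ratio is $\le 1$). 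Hence $W_p(E,0)\le C(u(0)+1)<\infty$, i.e.\ $E$ is $p$-thin for continuity at $0$.

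\textbf{Sufficiency ($W_p(E,0)<\infty\Rightarrow\exists\,u$).} Decompose $E$ into dyadic shells $E_j=E\cap\bigl(B(0,2^{-j})\setminus\bar B(0,2^{-j-1})\bigr)$ and set $\theta_j=\bigl(cap_p(E_j,B(0,2^{-j+1}))/cap_p(B(0,2^{-j}),B(0,2^{-j+1}))\bigr)^{1/(p-1)}$; a routine dyadic comparison shows $W_p(E,0)<\infty\iff\sum_j\theta_j<\infty$. The naive attempt to take $u=\sum_j\hat R_{E_j}$ fails, because for $p\ne 2$ a sum of $\mathcal{A}$-superharmonic functions need not be $\mathcal{A}$-superharmonic; this non-additivity is the central obstruction. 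Instead, for small $\rho$ set $u=\hat R_{E\cap B(0,\rho)}$, which by Theorem~\ref{Thm:balayage-existence} is $\mathcal{A}$-superharmonic with $0\le u\le 1$ and $u=1$ quasi-everywhere on $E\cap B(0,\rho)$, so that (modulo a capacity-zero exceptional set, harmless for the lower semicontinuous $u$) $\lim_{E\ni x\to 0}u(x)=1$. It remains to show $u(0)<1$ --- in fact $u(0)\to 0$ as $\rho\to 0$ --- and this is precisely the quantitative Wiener estimate \cite[Theorem~1.1]{KM94}: one runs an induction over the shells $j\ge j_0$ (with $2^{-j_0}\approx\rho$), using the upper bound in \eqref{Equ: main-use-wolff} together with Lemma~\ref{Lem:cap-estimate} and Harnack-type comparison to dominate the increment of $\hat R_{E\cap B(0,\rho)}$ across each shell by $C\theta_j$ plus a controlled fraction of the value one shell further out, and sums the resulting recursion to get $u(0)\le C\sum_{j\ge j_0}\theta_j\to 0$. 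Finally, replacing $u$ by $\min\{u,\lambda\}$ for a suitable $\lambda\in(u(0),1)$ arranges $u(0)=\liminf_{x\to 0}u(x)<1=\lim_{E\ni x\to 0}u(x)$.

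\textbf{Main obstacle.} The hard point is the last estimate above: bounding the value at the singular point $0$ of the capacitary potential of $E\cap B(0,\rho)$ by the finite Wiener sum $\sum_{j\ge j_0}\theta_j$, uniformly through all dyadic scales, in the absence of additivity for $\mathcal{A}$-superharmonic functions. This is exactly the seminal estimate of Kilpel\"ainen--Mal\'y, whose proof is the delicate shell-by-shell iteration sketched above built on the two-sided Wolff potential bounds of Theorem~\ref{Thm:main-use-wolff} and the capacity estimate of Lemma~\ref{Lem:cap-estimate}.
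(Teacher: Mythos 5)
You should first note that the paper contains no proof of this statement: it is quoted wholesale as \cite[Theorem 1.3]{KM94} (the Cartan property of Kilpel\"ainen--Mal\'y), so there is no in-paper argument to compare against. Your text is an outline of how the KM94 proof goes, and in the direction ``$E$ thin $\Rightarrow\exists\,u$'' you explicitly delegate the decisive step --- the bound $\hat R_{E\cap B(0,\rho)}(0)\le C\sum_{j\ge j_0}\theta_j$ obtained by the shell-by-shell iteration --- to Kilpel\"ainen--Mal\'y themselves. That is consistent with how the paper treats the result, but it makes your write-up a guide to the literature rather than a proof.

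The genuine gap is in your other direction, ``$\exists\,u\Rightarrow E$ thin''. There you invoke as a ``standard scale-by-scale lower bound'' the inequality $\nu(\bar B(0,t))\ge c\,cap_p(E\cap B(0,t),B(0,2t))$ for the capacitary distribution $\nu$ of $E\cap B(0,R)$. This is not a citable standard fact, and as a general property of capacitary distributions it is false: if $E\cap B(0,R)$ contains the ball $B(0,R/2)$ (minus the polar point $\{0\}$), the capacitary potential is $\equiv 1$ on that ball, $\nu$ is carried by the sphere $\partial B(0,R/2)$, so $\nu(\bar B(0,t))=0$ for $t<R/2$ while $cap_p(E\cap B(0,t),B(0,2t))\simeq t^{n-p}$ by Lemma \ref{Lem:subadditivity-Lip}(4). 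In that example $P(0)=1$, so it does not contradict the theorem, but it shows your inequality can only hope to hold by exploiting the hypothesis $P(0)\le u(0)<1$ --- and any such version already encodes the nontrivial ``sufficiency'' half of the Wiener criterion, namely that a supersolution bounded by $1$ which equals $1$ quasi-everywhere on a non-thin set near $0$ must have value $1$ at $0$ (a Maz'ya/Gariepy--Ziemer type iteration). That iteration, run directly on the normalized $u$, is the classical route for this implication, and it is exactly what your sketch omits; the simple Caccioppoli/cut-off test only yields $cap_p(E\cap B(0,t),B(0,2t))\le C(\nu(B(0,2t))+t^{n-p})$, whose additive term is of the size of the full ball capacity and hence useless. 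A secondary issue: in your sufficiency direction $\hat R_{E\cap B(0,\rho)}=1$ only quasi-everywhere on $E$, and the exceptional polar subset of $E$ cannot be absorbed by adding an auxiliary potential, precisely because sums of $\mathcal{A}$-superharmonic functions are not $\mathcal{A}$-superharmonic (the non-additivity you yourself point out); KM94 deal with this point explicitly, and the parenthetical ``harmless'' does not.
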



\subsection{$p$-thinness for the singular behavior} 

The notions of thinness in potential theory define so-called fine topologies for taking limits following Cartan's idea. 
The readers are referred to \cite{Br40, Br44, AM72, AH73, HeKi, HKM, KM94, MQ21} for detailed discussions and references therein. 
 To deal with the singular behavior of $p$-superharmonic functions, like \cite[Definition 3.1]{MQ21} (see also
\cite[Section 2.5]{Mi96}), we propose a notion of thinness that is different from that by Definition \ref{Def:p-thin-wiener} in general. Let 
\begin{eqnarray*}
\omega_i(x_0)&=&\{x\in\mathbb R^n:2^{-i-1}\le|x-x_0|\le2^{-i}\};\\
\Omega_i(x_0)&=&\{x\in\mathbb R^n:2^{-i-2}\le|x-x_0|\le2^{-i+1}\}.
\end{eqnarray*}

\begin{definition}\label{Def:quasi-p-thin}
For $p\in (1, n)$, a set $E\subset\mathbb R^n$ is said to be $p$-thin for the singular behavior at $x_0\in\mathbb R^n$ if 
\begin{equation}\label{Equ:quasi-p-thin}
\sum_{i=1}^\infty \frac{cap_p(E\cap\omega_i(x_0),\Omega_i(x_0))}{cap_p(\partial B(x_0, 2^{-i}), B(x_0, 2^{-i+1}))}  <+\infty.
\end{equation}
Meanwhile a set $E$ is said to be $n$-thin for the singular behavior at $x_0\in \mathbb{R}^n$ if
\begin{equation}\label{Equ:quasi-n-thin}
\sum_{i=1}^\infty i^{n-1}\, cap_n(E\cap\omega_i (x_0), \Omega_i(x_0)) < +\infty.
\end{equation} 
\end{definition}

By comparing \eqref{Equ:quasi-p-thin} and \eqref{Equ:quasi-n-thin}
with the Wiener integral criterion \eqref{Equ:p-thin-wiener} or the equivalent Wiener sum criterion 
\begin{equation}\label{Equ:old-p-thin}
\sum_{i=1}^\infty \left(\frac{cap_p(B(x_0, 2^{-i})\cap E, B(x_0, 2^{-i+1}))}{cap_p(B(x_0, 2^{-i}), B(x_0, 2^{-i+1}))}\right)^\frac 1 {p-1}<+\infty
\end{equation}
(cf. \cite[(12.9)]{HKM}), 
it is clear that a set $E\subset\Omega\subset\mathbb{R}^n$ is $p$-thin for the singular behavior at $x_0\in\Omega$ by Definition \ref{Def:quasi-p-thin}
if it is $p$-thin for continuity at $x_0$ by Definition \ref{Def:p-thin-wiener} when $p\in [2, n)$. The relation is not straightforward when $p=n\geq 3$.
In fact, analogous to \cite[Theorem 5.3 and Theorem 6.3 in Chapter 2]{Mi96} on the Riesz potentials and Log potentials, we have 
Theorem \ref{Thm:asymptotic-wolff-intr}, which is restated for readers' convenience as follows:

\begin{theorem}\label{Thm:wolff potential upper bdd-singualarity}
Let $\mu$ be a nonnegative finite Radon measure in $\Omega$ and $B(x_0, 3r_0)\subset \Omega$. 
Then there is a subset $E$ that is $p$-thin for the singular behavior at $x_0$ such that
\begin{equation*}
\lim_{x\to x_0 \text{ and } x\notin E } |x-x_0|^\frac {n-p}{p-1} W_{1,p}^{\mu}(x, \, r_0) = \frac{p-1}{n-p} \mu(\{x_0\})^\frac 1{p-1}
\end{equation*}
for $p\in (1, n)$. Similarly, there is a subset $E$ that is $n$-thin for the singular behavior at $x_0$ such that
\begin{equation*}
\lim_{x\to x_0 \text{ and } x\notin E} \frac{W_{1,n}^{\mu}(x, \, r_0)}{\log\frac 1{|x-x_0|}} = \mu(\{x_0\})^\frac 1{n-1}.
\end{equation*}
\end{theorem}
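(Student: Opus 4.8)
The plan is to strip off the atom of $\mu$ at $x_0$ and absorb the rest into a $p$-thin exceptional set. After translating $x_0$ to the origin, write $\mu = c\,\delta_0 + \sigma$ where $c = \mu(\{0\})$ and $\sigma$ is a finite nonnegative measure with $\sigma(\{0\}) = 0$, and set $\eta(\rho) = \sigma(B(0,\rho))$, so that $\eta(\rho)\downarrow 0$ as $\rho\downarrow 0$. An elementary integration of \eqref{Equ:wolff-potential} for the point mass gives, with no exceptional set, $|x|^{\frac{n-p}{p-1}}W^{c\delta_0}_{1,p}(x,r_0)\to\frac{p-1}{n-p}c^{\frac1{p-1}}$ when $p\in(1,n)$ and $W^{c\delta_0}_{1,n}(x,r_0)/\log\frac1{|x|}\to c^{\frac1{n-1}}$ when $p=n$, as $x\to 0$; since $\mu\ge c\,\delta_0$ and $t\mapsto(\mu(B(x,t))/t^{n-p})^{1/(p-1)}$ is monotone in $\mu$, these same expressions are pointwise lower bounds for $W^\mu_{1,p}$, so $\liminf_{x\to 0}$ of the normalized Wolff potential already dominates the claimed constant, with no thin set needed. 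Everything thus reduces to producing a set $E$, $p$-thin for the singular behavior at $0$, along which the $\limsup$ does not exceed the claimed constant.

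For $x\in\omega_i(0)$ I would split $W^\mu_{1,p}(x,r_0)$ into integrals over $(0,|x|/2)$, $[|x|/2,|x|)$, $[|x|,\delta)$ and $[\delta,r_0)$, where $0<\delta<r_0$ is an auxiliary parameter. On $[\delta,r_0)$ the integrand is bounded by a constant depending only on $\delta,\mu(\Omega),n,p$, so that piece times $|x|^{\frac{n-p}{p-1}}$ tends to $0$. On $[|x|/2,|x|)$ one has $0\notin B(x,t)$, hence $\mu(B(x,t))=\sigma(B(x,t))\le\eta(2|x|)$, and that piece times $|x|^{\frac{n-p}{p-1}}$ is $\lesssim\eta(2|x|)^{\frac1{p-1}}\to 0$. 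On $[|x|,\delta)$ one has $0\in B(x,t)$ and $\mu(B(x,t))\le c+\eta(2\delta)$, so that piece times $|x|^{\frac{n-p}{p-1}}$ is at most $\frac{p-1}{n-p}(c+\eta(2\delta))^{\frac1{p-1}}$. Letting $x\to 0$ and then $\delta\to 0$, the only term that can push the $\limsup$ above $\frac{p-1}{n-p}c^{\frac1{p-1}}$ is $I(x):=|x|^{\frac{n-p}{p-1}}\int_0^{|x|/2}(\sigma(B(x,t))/t^{n-p})^{\frac1{p-1}}\frac{dt}{t}$ (here $\mu=\sigma$ since $0\notin B(x,t)$), so the theorem reduces to showing $I(x)\to 0$ as $x\to 0$ off a $p$-thin set.

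The decisive observation is that when $x\in\omega_i(0)$ and $t<|x|/2$ one has $B(x,t)\subset\Omega_i(0)$; hence $I(x)\le|x|^{\frac{n-p}{p-1}}W^{\tau_i}_{1,p}(x,2^{-i})$, where $\tau_i$ is the restriction of $\sigma$ to $\Omega_i(0)$, and, because each point lies in boundedly many of the annuli $\Omega_i(0)$, the masses $\|\tau_i\|=\sigma(\Omega_i(0))$ satisfy $\sum_i\|\tau_i\|\lesssim\|\sigma\|<\infty$. Fix a sequence $\epsilon_i\downarrow 0$ and put $\hat E_i=\{x\in\omega_i(0):|x|^{\frac{n-p}{p-1}}W^{\tau_i}_{1,p}(x,2^{-i})>\epsilon_i\}$ and $E=\bigcup_i\hat E_i$; then $x\to 0$ with $x\notin E$ forces $I(x)\le\epsilon_{i(x)}\to 0$. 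To estimate $cap_p(\hat E_i,\Omega_i(0))$, use Lemma \ref{Lem:existence-p-sh} to get a nonnegative $p$-superharmonic $u_i$ on $\tilde\Omega_i:=B(0,2^{-i+3})$ with $-\Delta_p u_i=\tau_i$; the lower estimate in Theorem \ref{Thm:main-use-wolff} gives $W^{\tau_i}_{1,p}(x,2^{-i})\le c_1^{-1}u_i(x)$ on $\omega_i(0)$, and since $|x|^{\frac{n-p}{p-1}}$ is comparable to $2^{-i(n-p)/(p-1)}$ there, $\hat E_i\subset\{u_i>\lambda_i\}$ with $\lambda_i$ comparable to $\epsilon_i\,2^{i(n-p)/(p-1)}$. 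Feeding a truncation of $u_i$ (whose Riesz measure is still at most $\|\tau_i\|$) into Lemma \ref{Lem:cap-estimate} yields $\lambda_i^{p-1}cap_p(\{u_i>\lambda_i\},\tilde\Omega_i)\le\|\tau_i\|$, that is, $cap_p(\hat E_i,\tilde\Omega_i)\lesssim\|\tau_i\|\,\epsilon_i^{-(p-1)}2^{-i(n-p)}$ (using $\frac{n-p}{p-1}(p-1)=n-p$); and a routine cutoff-and-Poincar\'e comparison — legitimate because $\hat E_i\subset\omega_i(0)$ sits compactly inside $\Omega_i(0)$ and $\Omega_i(0),\tilde\Omega_i$ are both of scale $2^{-i}$ — promotes this to $cap_p(\hat E_i,\Omega_i(0))\lesssim\|\tau_i\|\,\epsilon_i^{-(p-1)}2^{-i(n-p)}$. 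Since $cap_p(\partial B(0,2^{-i}),B(0,2^{-i+1}))=c(n,p)2^{-i(n-p)}$ by Lemma \ref{Lem:subadditivity-Lip}(4), the series in \eqref{Equ:quasi-p-thin} for $E$ is $\lesssim\sum_i\|\tau_i\|\,\epsilon_i^{-(p-1)}$, and choosing $\epsilon_i=(\sum_{j\ge i}\|\tau_j\|)^{1/(2(p-1))}$ makes it a convergent telescoping sum. The case $p=n$ is handled identically, with $|x|^{\frac{n-p}{p-1}}$ replaced by $1/\log\frac1{|x|}$: now $cap_n(\partial B(0,2^{-i}),B(0,2^{-i+1}))$ is a positive constant, $\log\frac1{|x|}$ is comparable to $i$ on $\omega_i(0)$, and the factor $i^{n-1}$ in \eqref{Equ:quasi-n-thin} is exactly what is needed for $\sum_i i^{n-1}cap_n(\hat E_i,\Omega_i(0))\lesssim\sum_i\|\tau_i\|\,\epsilon_i^{-(n-1)}<\infty$.

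The heart of the matter — and the only place the particular notion of Definition \ref{Def:quasi-p-thin} is used — is the thinness of $E$: it rests on the fact that at scales below $|x|$ (with $|x|$ of size $2^{-i}$) only $\sigma\llcorner\Omega_i(0)$ is seen, so the masses $\|\tau_i\|$ are summable, together with the passage from a super-level set of a Wolff potential to one of a $p$-superharmonic function that makes the weak-type estimate Lemma \ref{Lem:cap-estimate} applicable. The surrounding bookkeeping — isolating the ranges of $t$ that reproduce the sharp constant $\frac{p-1}{n-p}\mu(\{x_0\})^{1/(p-1)}$ and showing that precisely one range of scales requires an exceptional set — has to be carried out carefully so as not to degrade that constant.
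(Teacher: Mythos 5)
Your argument is correct and follows essentially the same route as the paper: split the Wolff integral into ranges of $t$, extract the constant $\frac{p-1}{n-p}\mu(\{x_0\})^{1/(p-1)}$ from the intermediate range, and control the near range $t \lesssim |x|$ by placing its superlevel sets into a set satisfying \eqref{Equ:quasi-p-thin} via Lemma \ref{Lem:existence-p-sh}, the lower bound in Theorem \ref{Thm:main-use-wolff}, and Lemma \ref{Lem:cap-estimate}, with a du Bois-Reymond choice of thresholds $\epsilon_i$. The only differences are presentational: you split off the atom $\mu(\{x_0\})\delta_{x_0}$ and localize $\sigma$ to $\Omega_i(x_0)$ before solving, whereas the paper keeps $\mu$ intact, uses the mean value theorem on the intermediate range, and solves directly on $\Omega_i(x_0)$.
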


\begin{proof} For convenience, let $x_0=0$. For $x\in \omega_i$, let us write
\begin{equation}\label{Equ:4-terms-sin}
\aligned
W_{1,p}^{\mu}& (x,r_{0}) =\int_{0}^{r_{0}}\left( \frac{\mu(B(x,t))}{t^{n-p}}\right)^{\frac{1}{p-1}}\frac{dt}{t}\\
 & = \left\{ \int_{2^{i_0}|x|}^{r_0}  +  \int_{(1+\delta)|x|}^{2^{i_0}|x|}  + \int_{(1-\delta)|x|}^{(1+\delta)|x|} + \int_{0}^{(1-\delta)|x|}
 \right\}
\left( \frac{\mu(B(x,t))}{t^{n-p}}\right)^{\frac{1}{p-1}}\frac{dt}{t}
\endaligned
\end{equation}
for some $i_0$ large and $\delta > 0$ small to be fixed. For the first term in the right side of \eqref{Equ:4-terms-sin}
\begin{align*}
\int_{2^{i_0}|x|}^{r_0}\left( \frac{\mu(B(x,t))}{t^{n-p}}\right)^{\frac{1}{p-1}}\frac{dt}{t} \leq \frac {p-1}{n-p}\mu(\Omega)^\frac 1{p-1} (2^{i_0}|x|)^{-\frac {n-p}{p-1}}.
\end{align*}
Clearly, this is the easiest term and
$$
\lim_{x\to 0} |x|^\frac {n-p}{p-1} \int_{2^{-i_0 +1}}^{r_0} \left( \frac{\mu(B(x,t))}{t^{n-p}}\right)^{\frac{1}{p-1}}\frac{dt}{t} \leq C2^{-i_0\frac {n-p}{p-1}} \to 0
$$
as $i_0\to \infty$. For the second term in the right side of \eqref{Equ:4-terms-sin}
$$
\int_{(1+\delta)|x|}^{2^{i_0}|x|}  \left( \frac{\mu(B(x,t))}{t^{n-p}}\right)^{\frac{1}{p-1}} \frac{dt}{t} 
 =   (\mu(B(x, t_{i, i_0}))^\frac 1{p-1} \frac {p-1}{n-p}(((1+\delta)|x|)^{ - \frac {n-p}{p-1} } - (2^{i_0}|x|)^{-\frac {n-p}{p-1}})
$$
for some $t_{i, i_0} \in ((1+\delta)|x|, 2^{i_0}|x|)$. Hence
$$
\aligned
\lim_{x\to 0} |x|^\frac {n-p}{p-1}  & \int_{(1+\delta)|x|}^{2^{i_0|x|}}   \left( \frac{\mu(B(x,t))}{t^{n-p}}\right)^{\frac{1}{p-1}} \frac{dt}{t} 
= \frac {p-1}{n-p} \mu(\{0\})^\frac 1{p-1} (\frac 1{(1+\delta)^\frac {n-p}{p-1}} - \frac 1{2^{i_0\frac {n-p}{p-1}}})\\
& \to \frac {p-1}{n-p}\mu(\{0\})^\frac 1{p-1}
\endaligned
$$
as $i_0\to \infty$ and $\delta\to 0$. 
For the third term in the right side of \eqref{Equ:4-terms-sin}
$$
\int_{(1-\delta)|x|}^{(1+\delta)|x|} \left( \frac{\mu(B(x,t))}{t^{n-p}}\right)^{\frac{1}{p-1}}\frac{dt}{t} 
\leq \frac {p-1}{n-p} \mu(\Omega)^\frac 1{p-1} (((1-\delta)|x|)^{-\frac {n-p}{p-1}} - ((1+\delta)|x|)^{-\frac {n-p}{p-1}}).
$$
Therefore
$$
\lim_{x\to 0}|x|^\frac{n-p}{p-1}\int_{(1-\delta)|x|}^{(1+\delta)|x|}  \left( \frac{\mu(B(x,t))}{t^{n-p}}\right)^{\frac{1}{p-1}}\frac{dt}{t} 
\leq \frac {p-1}{n-p} \mu(\Omega)^\frac 1{p-1} ((1-\delta)^{-\frac {n-p}{p-1}} - (1+\delta)^{-\frac {n-p}{p-1}}) \to 0
$$
as $\delta\to 0$. For the fourth term in the right side of \eqref{Equ:4-terms-sin}, we now switch the gear and estimate $cap_p(E_i, \Omega_i)$, 
where
\begin{equation}\label{Equ:wolff-for-small-r}
E_i = \{x\in \omega_i: W_{1, p}^\mu(x, (1-\delta)|x|) \geq \alpha_i^{-\frac 1{p-1}} |x|^{-\frac{n-p}{p-1}}\}
\end{equation}
and $\{\alpha_i\}$ is chosen so that
$$
\lim_{i\to \infty} \alpha_i= \infty \text{ but } \sum_i \alpha_i\mu(\Omega_i) < \infty
$$
in the light of  \cite{Rey} (cf. \cite[(5) Page 40]{Brom})  and $\sum_i \mu(\Omega_i) < \infty$.
To apply Lemma \ref{Lem:cap-estimate}, we let $u_i$ be the nonnegative $p$-superharmonic function that solves 
$$
\left\{\aligned -\Delta_p u_i & = \mu \text{ in $\Omega_i$}\\
u_i & = 0 \text{ on $\partial \Omega_i$} \endaligned\right..
$$
The existence of such $u_i$ is given by Lemma \ref{Lem:existence-p-sh} 
(cf. \cite[Theorem 2.4]{KM92}). Applying Theorem \ref{Thm:main-use-wolff} (cf. \cite[Theorem 1.6]{KM94}), we have
$$
c(n, p)W_{1, p}^\mu (x, (1-\delta)|x|) \leq u_i(x) \text{ for $x\in \omega_i$.}
$$
Therefore, in the light of Lemma \ref{Lem:cap-estimate} (cf. \cite[Lemma 3.9]{KM94}), we have
$$
cap_p(E_i, \Omega_i) \leq c(n, p) \mu(\Omega_i)\alpha_i |x|^{n-p} \leq c(n, p) 2^{-i(n-p)} \alpha_i\mu(\Omega_i).
$$ 
Thus, if let $E = \underset{i}{\bigcup} E_i \), we have
$$
\sum_i 2^{(n-p)i} cap_p (E\cap\omega_i, \Omega_i) \leq c(n, p) \sum_i \alpha_i\mu(\Omega_i) < \infty,
$$
which implies that $E$ is $p$-thin according to Definition \ref{Def:quasi-p-thin}. Here we have used the facts 
that are stated as (2) and (4) in Lemma \ref{Lem:subadditivity-Lip}. So the proof is completed for $p\in (1, n)$.
\\

For $p=n$, the proof goes similarly. We also start with
\begin{equation}\label{Equ:4-terms-sin-n}
\aligned
W_{1,n}^{\mu}& (x,r_{0}) =\int_{0}^{r_{0}} \mu(B(x,t))^{\frac{1}{n-1}} \frac{dt}{t}\\
 & = \left\{ \int_{(1+\delta)|x|^\delta}^{r_0}  +  \int_{(1+\delta)|x|}^{(1+\delta)|x|^\delta}  + \int_{(1-\delta)|x|}^{(1+\delta)|x|} + \int_{0}^{(1-\delta)|x|}
 \right\} \mu(B(x,t))^{\frac{1}{n-1}} \frac{dt}{t}
\endaligned
\end{equation}
for some $\delta > 0$ small to be fixed. For the first term in the right side of \eqref{Equ:4-terms-sin-n}
\begin{align*}
\int_{(1+\delta)|x|^\delta}^{r_0} \mu(B(x,t))^{\frac{1}{n-1}} \frac{dt}{t} \leq \mu(\Omega)^\frac 1{n-1} (\log \frac{r_0}{1+\delta} + \delta\log \frac 1{|x|}).
\end{align*}
This is again an easy term, since
$$
\lim_{x\to 0} \frac 1{-\log|x|} \int_{(1+\delta)|x|^\delta}^{r_0} \mu(B(x,t))^{\frac{1}{n-1}} \frac{dt}{t} \to 0
$$
as $\delta\to 0$. For the second term in the right side of \eqref{Equ:4-terms-sin-n}
$$
\int_{(1+\delta)|x|}^{(1+\delta)|x|^\delta} \mu(B(x,t))^{\frac{1}{n-1}} \frac{dt}{t}
 =   (\mu(B(x, t_\delta))^\frac 1{n-1} (1-\delta) \log\frac 1{|x|}
$$
for some $t_\delta \in ((1+\delta)|x|, (1+\delta)|x|^\delta)$. Hence
$$
\lim_{x\to 0} \frac 1{-\log|x|}  \int_{(1+\delta)|x|}^{(1+\delta)|x|^\delta} \mu(B(x,t))^{\frac{1}{n-1}} \frac{dt}{t}
\to \mu(\{0\})^\frac 1{n-1}
$$
as $\delta\to 0$. Because
$$
\bigcap B(x, t_\delta) = \{0\} \text{ as $x\to 0$}.
$$
For the third term in the right side of \eqref{Equ:4-terms-sin-n}
$$
\int_{(1-\delta)|x|}^{(1+\delta)|x|} \mu(B(x,t))^{\frac{1}{n-1}} \frac{dt}{t}
\leq\mu(\Omega)^\frac 1{n-1} \log\frac{1+\delta}{1-\delta}.
$$
This turns out to the easiest term
$$
\lim_{x\to 0} \frac 1{-\log |x|} \int_{(1-\delta)|x|}^{(1+\delta)|x|}  \mu(B(x,t))^{\frac{1}{n-1}} \frac{dt}{t} \to 0.
$$
For the fourth term in the right side of \eqref{Equ:4-terms-sin-n}, we again switch the gear and estimate $cap_p(E_i, \Omega_i)$, 
where
\begin{equation}\label{Equ:wolff-for-small-r}
E_i = \{x\in \omega_i: W_{1, n}^\mu(x, (1-\delta)|x|) \geq \gamma_i^{-\frac 1{n-1}} \log\frac 1{|x|} \}
\end{equation}
and $\{\gamma_i\}$ is chosen so that
$$
\lim_{i\to \infty} \gamma_i= \infty \text{ but } \sum_i \gamma_i\mu(\Omega_i) < \infty
$$
in the light of  \cite{Rey} (cf. \cite[(5) Page 40]{Brom}) and $\sum_i \mu(\Omega_i) < \infty$.
Again, we let $u_i$ be the nonnegative $n$-superharmonic function that solves 
$$
\left\{\aligned -\Delta_n u_i & = \mu \text{ in $\Omega_i$}\\
u_i & = 0 \text{ on $\partial \Omega_i$} \endaligned\right..
$$
And we apply Theorem \ref{Thm:main-use-wolff} (cf. \cite[Theorem 1.6]{KM94}) to have
$$
c(n)W_{1, n}^\mu (x, (1-\delta)|x|) \leq u_i(x) \text{ for $x\in \omega_i$.}
$$
and
$$
cap_n(E_i, \Omega_i) \leq c(n) \mu(\Omega_i) i^{-(n-1)}\gamma_i.
$$ 
Thus, if let $E = \underset{i}{\bigcup} E_i$, we have
$$
\sum_i i^{n-1} cap_p (E\cap\omega_i, \Omega_i) \leq c(n) \sum_i \gamma_i\mu(\Omega_i) < \infty,
$$
which implies that $E$ is $n$-thin according to Definition \ref{Def:quasi-p-thin}. 
\end{proof}

\subsection{The $p$-fine topology}\label{Subsec:fine-topology}
In potential theory, the introduction of a fine topology based on a notion of thin sets often provides insightful analysis of 
the subtle continuity deficiency under usual Euclidean topology. 
For the Riesz potentials, in \cite[Theorem 5.1 and 5.2 in Section 2.5]{Mi96}, it has been verified that the topology based on the notion of 
$\alpha$-thinness defined at the beginning of \cite[Section 2.5]{Mi96} is the fine one for $\alpha$-continuity. Interestingly, the same fine topology 
also works for the $\alpha$-limit of the Riesz potentials divided by the kernels at singular points 
shown in \cite[Theorem 5.3 and 5.4 in Section 2.5]{Mi96}, which gives the 
effective description of the asymptotic behavior and has had geometric applications in our
recent work \cite{MQ21,MQ021,MQ22, MQ23}. Similarly, in dimension 2, as pointed out in the 
discussion near the end of \cite{AH73} (cf. \cite[Section 2.2]{MQ21}), 
the two different notions of thinness for continuity and the singular behavior for the Log potentials 
turn out to be equivalent due to \cite[Theorem 2]{Br44} and therefore induce the same fine topology.
\\

For $p$-superharmonic functions, the $p$-thinness for continuity by the Wiener integral criterion \eqref{Equ:p-thin-wiener} in 
Definition \ref{Def:p-thin-wiener} has been shown to be good for a fine topology in \cite[Theorem 1.1 and Theorem 1.3]{KM94} for $p\in (1, n]$, 
analogous to \cite[Theorem 5.1 and 5.2 in Section 2.5]{Mi96} for the Riesz potentials. 
However, contrary to what are described in the above about the Riesz potentials and the Log potentials, it is remarkable that
the notion of $p$-thinness in Definition \ref{Def:quasi-p-thin} for the singular behavior differs from that by Definition \ref{Def:p-thin-wiener} for 
continuity when $p\in (1, n)$ or $p=n\geq 3$. 
Theorem \ref{Thm:wolff potential upper bdd-singualarity} as the analogue of \cite[Theorem 5.3 and Theorem 6.3 in Chapter 2]{Mi96}
on the Riesz and Log potentials in the previous subsection tells that the topology induced by $p$-thinness for the singular behavior indeed is sufficient 
to give the asymptotics. Next we want to establish the analogue of \cite[Theorem 5.4 in Section 2.5]{Mi96} to confirm the topology induced by the 
$p$-thinness for the singular behavior in Definition \ref{Def:quasi-p-thin} is indeed the fine topology for getting asymptotics for $p\in(1, n]$. 
Namely, we have Theorem \ref{Thm:fine-topology-intr}, which is restated for readers' convenience as follows:

\begin{theorem}\label{Thm:converse-p-thin} Suppose that $E$ is a subset that is $p$-thin for the singular behavior at the origin
according to Definition \ref{Def:quasi-p-thin} for $p\in (1, n]$. And suppose that the origin is in $\bar E\setminus E$. Then, when $p\in (1, n)$, 
there is a Radon measure $\mu$ in a neighborhood of the origin such that, for some fixed $r_0>0$, 
$$
\lim_{x\to 0 \text{ and } x\in E} |x|^\frac {n-p}{p-1}W^\mu_{1, p}(x, r_0) = \infty.
$$
Similarly, when $p=n$, there is a Radon measure $\mu$ in a neighborhood of the origin such that, for some fixed $r_0>0$, 
$$
\lim_{x\to 0 \text{ and } x\in E} \frac {W^\mu_{1, n}(x, r_0)}{\log\frac 1{|x|}} = \infty.
$$
\end{theorem}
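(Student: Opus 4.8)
The plan is to convert ``$E$ is $p$-thin for the singular behavior at $0$'' into a summability statement, and then to take $\mu$ to be a weighted sum of the capacitary distributions of the dyadic pieces $E_i=E\cap\omega_i(0)$ of $E$, with weights tuned against the thinness series, together with a small correction term handling a $p$-polar exceptional set. Throughout put $x_0=0$ and $\kappa_i=cap_p(E_i,\Omega_i(0))$. By Lemma \ref{Lem:subadditivity-Lip}(4) the denominator in \eqref{Equ:quasi-p-thin} equals $c(n,p)2^{-i(n-p)}$, so the hypothesis for $p\in(1,n)$ reads $\sum_i 2^{i(n-p)}\kappa_i<\infty$, while for $p=n$ it reads $\sum_i i^{n-1}\kappa_i<\infty$; in either case the general term tends to $0$, which is the quantitative thinness I will use, and infinitely many $E_i$ are nonempty since $0\in\bar E\setminus E$. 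Using the classical fact (cf.\ \cite{Rey}) that a convergent nonnegative series stays convergent after multiplication by a suitable sequence $\beta_i\uparrow\infty$, fix $\beta_i\uparrow\infty$ with $\sum_i\beta_i 2^{i(n-p)}\kappa_i<\infty$ (resp.\ $\sum_i\beta_i i^{n-1}\kappa_i<\infty$) and set $a_i=\beta_i 2^{i(n-p)}$ (resp.\ $a_i=\beta_i i^{n-1}$).

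The key local estimate is the following: writing $\Omega_i=\Omega_i(0)$, $u_i=\hat R_{E_i}$ for the $p$-potential of $E_i$ in $\Omega_i$ (Definition \ref{Def:reduite}) and $\nu_i=-\Delta_p u_i$ for its capacitary distribution, there is $c_3=c_3(n,p)>0$ such that, for all large $i$,
$$
W^{\nu_i}_{1,p}\bigl(x,2^{-i-3}\bigr)\ \ge\ c_3 \qquad\text{for $p$-q.e.\ }x\in E_i .
$$
By Theorems \ref{Thm:balayage-existence} and \ref{Thm:capacitary-function}, Lemma \ref{Lem:dual-capacity} and the standard properties of $p$-potentials (cf.\ \cite{HKM}), $u_i$ is a nonnegative $p$-superharmonic function in $\Omega_i$ with $u_i\le1$, $u_i\in H^{1,p}_0(\Omega_i)$, $u_i=1$ quasi-everywhere on $E_i$, $\operatorname{supp}\nu_i\subset\bar E_i$, and $\nu_i(\Omega_i)=\kappa_i$. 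Applying Lemma \ref{Lem:cap-estimate} to $u_i$ with $\lambda=\tfrac1{2c_2}$, where $c_2$ is the constant of Theorem \ref{Thm:main-use-wolff}, gives $cap_p\bigl(\{u_i>\tfrac1{2c_2}\},\Omega_i\bigr)\le(2c_2)^{p-1}\kappa_i$. On the other hand, for $x\in\omega_i$ and $r=2^{-i-4}$ one has $B(x,3r)\subset\Omega_i$ and $cap_p(B(x,r),\Omega_i)$ is comparable to the denominator $c(n,p)2^{-i(n-p)}$ of \eqref{Equ:quasi-p-thin} (resp.\ to a fixed constant $c(n)$, by scale invariance when $p=n$), by Lemma \ref{Lem:subadditivity-Lip}(2),(4); since $\kappa_i$ is of smaller order, for large $i$ the level set $\{u_i>\tfrac1{2c_2}\}$ has strictly smaller $p$-capacity than $B(x,r)$ and hence cannot contain it, whence $\inf_{B(x,r)}u_i\le\tfrac1{2c_2}$. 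Feeding this into the upper bound $u_i(x)\le c_2\bigl(\inf_{B(x,r)}u_i+W^{\nu_i}_{1,p}(x,2r)\bigr)$ of Theorem \ref{Thm:main-use-wolff} and using $u_i(x)=1$ for $p$-q.e.\ $x\in E_i$ yields $W^{\nu_i}_{1,p}(x,2^{-i-3})\ge\tfrac1{2c_2}=:c_3$.

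To assemble $\mu$, first note that $N:=\bigcup_i\{x\in E_i:u_i(x)<1\}$ is $p$-polar (Lemma \ref{Lem:subadditivity-Lip}(1)), so by the standard description of polar sets (cf.\ \cite{HKM}) there is a nonnegative $p$-superharmonic $w$ in a neighborhood of $0$ with $w\equiv+\infty$ on $N$; let $\lambda$ be the restriction to a bounded neighborhood of $0$ of $-\Delta_p w$, a finite measure, for which the upper bound of Theorem \ref{Thm:main-use-wolff} forces $W^{\lambda}_{1,p}(\cdot,r_0)\equiv+\infty$ on $N$. Put $\mu=\sum_i a_i\nu_i+\lambda$. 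Then $\mu(\mathbb R^n)\le\sum_i a_i\kappa_i+\lambda(\mathbb R^n)<\infty$ and $\mu$ is supported in a neighborhood of $0$. Fix $r_0>0$ and let $x\in E_i$ with $i$ large. If $x\in N$ then $W^{\mu}_{1,p}(x,r_0)=+\infty$. Otherwise, by monotonicity of the Wolff potential in the measure, the homogeneity $W^{a_i\nu_i}_{1,p}=a_i^{1/(p-1)}W^{\nu_i}_{1,p}$, and the key estimate, $W^{\mu}_{1,p}(x,r_0)\ge a_i^{1/(p-1)}c_3$; since $|x|\ge2^{-i-1}$ and $a_i^{1/(p-1)}=\beta_i^{1/(p-1)}2^{i(n-p)/(p-1)}$,
$$
|x|^{\frac{n-p}{p-1}}\,W^{\mu}_{1,p}(x,r_0)\ \ge\ 2^{-\frac{n-p}{p-1}}\,c_3\,\beta_i^{\frac1{p-1}}\ \longrightarrow\ \infty\qquad(i\to\infty),
$$
the normalizing exponent exactly absorbing the growth of $a_i$. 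Since $i\to\infty$ when $x\to0$ along $E$, this proves the case $p\in(1,n)$. For $p=n$ the identical argument with $a_i=\beta_i i^{n-1}$ gives $W^{\mu}_{1,n}(x,r_0)\ge c_3\,\beta_i^{1/(n-1)}\,i$ for $x\in E_i\setminus N$, and dividing by $\log\frac1{|x|}\le(i+1)\log2$ gives the conclusion.

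I expect the key local estimate to be the crux: it is the only place where thinness enters quantitatively — through the fact that $\kappa_i$ is of smaller order than the $p$-capacity of a ball of radius $\asymp2^{-i}$ — and it rests on combining the weak-type capacity bound (Lemma \ref{Lem:cap-estimate}) with the two-sided Wolff estimate (Theorem \ref{Thm:main-use-wolff}); after that the proof is the bookkeeping observation that the nonlinear homogeneity $W^{c\mu}_{1,p}=c^{1/(p-1)}W^{\mu}_{1,p}$ together with the normalizing exponent $\tfrac{n-p}{p-1}$ (resp.\ the $\log$ kernel) makes the weights $a_i$ simultaneously summable against $\kappa_i$ and divergent after normalization. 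A minor point requiring care is the $p$-polar set where $\hat R_{E_i}<1$, which is why the separate term $\lambda$ is included.
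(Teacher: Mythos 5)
Your proof is correct and follows essentially the same route as the paper's: weight the capacitary distributions of the dyadic pieces by a slowly diverging sequence chosen to keep the total mass finite, combine the weak-type capacity bound of Lemma \ref{Lem:cap-estimate} with the two-sided Wolff estimate of Theorem \ref{Thm:main-use-wolff} to get a uniform lower bound on each piece's Wolff potential on that piece, and let the $\frac 1{p-1}$-homogeneity of $W_{1,p}$ in the measure convert the weights into divergence after normalization. The only real variation is that the paper replaces $E\cap\omega_i$ by open supersets $\hat E_i$ of comparable capacity so that the capacitary potential equals $1$ at every point of $E\cap\omega_i$ and no exceptional set arises, whereas you work with $E_i$ directly and absorb the $p$-polar set where $\hat R_{E_i}<1$ into an auxiliary measure $\lambda$; both devices are valid.
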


\begin{proof} Let us prove the theorem for $p\in (1, n)$ first. To start, from (4) in Lemma \ref{Lem:subadditivity-Lip}, we may rewrite \eqref{Equ:quasi-p-thin} as 
$$
\sum_i 2^{i(n-p)} cap_p(E\cap\omega_i, \Omega_i) < \infty.
$$
Secondly, by the definition, it is easily seen that
$$
 cap_p(E\cap\omega_i, B(0, 2^{-i+3}))\leq cap_p(E\cap\omega_i, \Omega_i),
$$
which implies
$$
\sum_i 2^{i(n-p)} cap_p(E\cap\omega_i, B(0, 2^{-i+3})) < \infty.
$$
Next, again by the definition, for each $i$, we pick up open sets $\hat E_i$ such that
$$
E\cap \omega_i \subset \hat E_i \subset \Omega_i \subset B(0, 2^{-i+3})
$$
and
$$
\sum_i 2^{i(n-p)} cap_p(\hat E_i, B(0, 2^{-i+3})) < \infty.
$$
In the light of \cite{Rey} (cf. \cite[(5) Page 40]{Brom}) , there is a sequence of number $\{\kappa_i\}$ such that $\kappa_i\to\infty$ and
\begin{equation}\label{Equ:kappa-p-thin}
\sum_i 2^{i(n-p)} \kappa_i cap_p(\hat E_i, B(0, 2^{-i+3})) < \infty.
\end{equation}
Now, based on Theorem \ref{Thm:capacitary-function} (cf. \cite[Theorem 9.35]{HKM}), we let $\hat R_{\hat E_i}$ be the capacitary function
of $\hat E_i$ in $B(0, 2^{-i+3})$ and 
$$
\mu_i = -\Delta_p \hat R_{\hat E_i}
$$
be the capacitary distribution of $\hat E_i$ in $B(0, 2^{-i+3})$.  We now set
$$
\mu = m\delta_0 + \sum_i 2^{i(n-p)} \kappa_i \hat\mu_i,
$$
for any $m\geq 0$, where $\delta_0$ is the Dirac measure at the origin and $\hat\mu_i (A) = \mu_i(A\cap B(0, 2^{-i +3}))$. 
And We claim $\mu$ is the Radon measure on $B(0, 2)$ that we want. 
First, in the light of Lemma \ref{Lem:dual-capacity}, we have
$$
\hat \mu_i (B(0, 2)) =\mu_i(B(0, 2^{-i+3})) = cap_p(\hat E_i, B(0, 2^{-i+3})),
$$
which implies $\mu$ is a finite Radon measure. And, since each $\hat E_i$ is open, we know, for all 
$x\in E\cap\omega_i\subset \hat E_i$, 
\begin{equation}\label{Equ:lower-bd-key}
1 = \hat R_{\hat E_i} (x) \leq c_2 (\inf_{B(x, 2^{-i+1})} \hat R_{\hat E_i} + W^{\mu_i}_{1, p}(x, \cdot 2^{-i+2})) 
\end{equation}
due to Theorem \ref{Thm:main-use-wolff} (\cite[Theorem 1.6]{KM94}). To handle $\inf_{B(x, 2^{-i+1})}\hat R_{\hat E_i}$, we apply Lemma 
\ref{Lem:cap-estimate} (\cite[Lemma 3.9]{KM94}) and get
$$
\aligned
cap_p(\{x: \hat R_{\hat E_i} (x) & > \frac 12 \kappa_i^{-\frac 1{p-1}}\}, B(0, 2^{-i+3}) \leq  2^{p-1}\kappa_i
cap_p(\hat E_i, B(0, 2^{-i+3}))\\
& \leq \varepsilon cap_p(B(0, 2^{-i}), B(0, 2^{-i+3}))
\endaligned
$$
for some small number $\varepsilon >0$, because of \eqref{Equ:kappa-p-thin}. Therefore
$$
\inf_{B(x, 2^{-i+1})}\hat R_{\hat E_i} \leq \inf_{B(0, 2^{-i})}\hat R_{\hat E_i} \leq \frac 12\kappa_i^{-\frac 1{p-1}} \to 0, 
$$
because $B(0, 2^{-i})\subset B(x, 2^{-i+1})$, which implies
$$
W^{\mu_i}_{1, p} (x, 2^{-i+2}) \geq \frac 1{2c_2}
$$
by \eqref{Equ:lower-bd-key}. Thus, for $x\in E\cap\omega_i \subset \hat E_i$, 
$$
W^\mu_{1, p}(x, \frac 14) \geq W^{2^{i(n-p)}\kappa_i\mu_i}_{1, p} (x, 2^{-i+2}) \geq c |x|^{-\frac {n-p}{p-1}} \kappa_i^\frac 1{p-1} \frac 1{2c_2}
$$
and
$$
|x|^\frac {n-p}{p-1} W^\mu_{1,p} (x, \frac 14) \geq c \kappa_i^\frac 1{p-1} \to \infty
$$
as $i\to \infty$. This completes the proof of the theorem for $p\in (1, n)$. 
\\

For $p=n$, since the proof goes similarly, we only give a sketch. 
We may start with open set $\hat E_i$ and $\tau_i\to\infty$ such that 
$$
E\cap\omega_i \subset \hat E_i \subset \Omega_i \subset B(0, 2^{-i+3}) 
$$ 
and
$$
\sum_i i^{n-1} \tau_i cap_p(\hat E_i, B(0, 2^{-i+3})) < \infty.
$$
And then let 
$$
\mu = m\delta_0 + \sum_i i^{n-1} \tau_i \hat\mu_i
$$
for any $m\geq 0$, where $\delta_0$ is the Dirac measure at the origin, $\hat\mu_i (A) = \mu_i(A\cap B(0, 2^{-i +3}))$, and 
$\mu_i$ is the capacitary distribution of $\hat E_i$ in $B(0, 2^{-i+3})$.
Similarly, we want to 
show $\mu$ is the Radon measure on $B(0, 2)$ that we want. For  $x\in E\cap\omega_i\subset\hat E_i$, we have \eqref{Equ:lower-bd-key}
and 
$$
\inf_{B(x, 2^{-i+1})} \hat R_{\hat E_i} \leq \frac 12\tau_i^{-\frac 1{n-1}} \to 0,
$$
which implies
$$
W^{\mu_i}_{1, n} (x, 2^{-i+2}) \geq \frac 1{2c_2}.
$$
Thus, for $x\in E\cap\omega_i\subset \hat E_i$, 
$$
W^\mu_{1, n} (x, \frac 14) \geq W^{i^{n-1}\tau_i\mu_i}_{1, n} (x, 2^{-i+2}) \geq c (\log \frac 1{|x|}) \tau_i^{\frac 1{n-1}} \frac 1{2c_2}
$$
and
$$
\frac {W^\mu_{1, n}(x, \frac 14)}{\log\frac 1{|x|}} \geq c \tau_i^\frac 1{n-1} \to \infty
$$
as $i\to\infty$. This completes the proof of the theorem for $p=n$.
\end{proof}

\section{On $p$-superharmonic functions and applications}\label{Sec:asymptotics-p-superharmonic}

In this section we discuss the asymptotic behavior of $p$-superharmonic functions at singularities based on that of the Wolff potentials in
Theorem \ref{Thm:wolff potential upper bdd-singualarity}. We also include some corollaries for solutions to fully nonlinear equations. 

\subsection{Arsove-Huber type theorem for $p$-superharmonic functions}\label{Subsec:arsove-huber}
For the convenience of readers, let us restate Theorem \ref{Thm:main-asymptotic-intr} as follows:

\begin{theorem}\label{Thm:main-asymptotic} Suppose that $u$ is a nonnagetive $p$-superharmonic function in $\Omega\subset\mathbb{R}^n$ satisfying
$$
-\Delta_p u = \mu
$$
for a nonnegative finite Radon measure $\mu$ on $\Omega$ and $p\in (1, n]$. Then, for $x_0\in\Omega$, there is a subset $E$ that is $p$-thin for singular
behavior at $x_0$ such that
\begin{equation}\label{Equ:p-super-asymptotic}
\lim_{x\to x_0\text{ and } x\notin E}  \frac{u(x)}{G_p(x, x_0)} = m
\end{equation} 
where $\alpha_0(n, p) m^{p-1} = \mu(\{0\})$ and $\alpha_0(n, p)$ is from \eqref{Equ:p-green}. Moreover
\begin{equation}\label{Equ:lower-bdd} 
u(x) \geq m|x|^{- \frac {n-p}{p-1}} - c_0
\end{equation}
for some $c_0$ and all $x$ in a neighborhood of $x_0$, where $G_p(x, x_0)$ is defined in \eqref{Equ:G-p}.
\end{theorem}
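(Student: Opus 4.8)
The plan is to reduce the asymptotics of $u$ to the asymptotics of its Wolff potential, already established in Theorem \ref{Thm:wolff potential upper bdd-singualarity}, together with a rescaling/compactness argument in the spirit of \cite{KV, Vn17} to pin down the unique sequential limit. Assume $x_0 = 0$. First I would use the fundamental two-sided estimate \eqref{Equ: main-use-wolff}: for $x$ near $0$,
\begin{equation*}
c_1 W^\mu_{1,p}(x, r) \le u(x) \le c_2\Big(\inf_{B(x,r)} u + W^\mu_{1,p}(x, 2r)\Big).
\end{equation*}
Choosing $r = r(x)$ comparable to $|x|$ (say $r = (1-\delta)|x|$ for the lower input and a slightly larger radius for the upper one), the Wolff potential over the far range $[c|x|, r_0]$ contributes a bounded error after dividing by $G_p(x,0)$ — this is exactly the ``easy terms'' computation inside the proof of Theorem \ref{Thm:wolff potential upper bdd-singualarity}. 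So modulo a set $E$ that is $p$-thin for the singular behavior (the one produced there, enlarged if necessary), one gets $u(x)/G_p(x,0)$ trapped between constant multiples of $\mu(\{0\})^{1/(p-1)}$ up to lower order. That already yields the lower bound \eqref{Equ:lower-bdd} with the correct power and some constant; the real content is identifying the exact constant $m$ with $\alpha_0(n,p) m^{p-1} = \mu(\{0\})$.

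The second and main step is the rescaling argument to nail the constant. Set $u_\lambda(y) = \lambda^{(n-p)/(p-1)} u(\lambda y)$ (for $p=n$, $u_\lambda(y) = u(\lambda y) + \tfrac{n-p}{p-1}\log\lambda$ is replaced by the additive normalization $u(\lambda y)/\log(1/\lambda)$), so that $u_\lambda$ is again $p$-superharmonic on an annular region, with $-\Delta_p u_\lambda = \mu_\lambda$ where $\mu_\lambda$ is the pushforward of $\mu$ scaled by the capacity scaling $\lambda^{n-p}$ from Lemma \ref{Lem:subadditivity-Lip}(2). As $\lambda \to 0$, $\mu_\lambda \rightharpoonup \mu(\{0\})\,\delta_0$ weakly on compact subsets of $\mathbb{R}^n \setminus\{0\}$ — this uses only that $\mu$ is finite and $\mu(\{0\})$ is the atom at the origin, so the mass of $\mu_\lambda$ away from any fixed annulus around the origin tends to $0$. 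Using the Wolff estimates \eqref{Equ: main-use-wolff} for uniform local bounds on $u_\lambda$ together with the local $L^s$ and gradient estimates in Lemma \ref{Lem:l^p-estimate}, one extracts a subsequential limit $u_0$ that is $p$-superharmonic on $\mathbb{R}^n\setminus\{0\}$ solving $-\Delta_p u_0 = \mu(\{0\})\,\delta_0$. By the classification of such solutions (the $p$-Green function is the unique such object up to adding a bounded $p$-harmonic function, and one checks the limit is radial from the vanishing of the oscillatory annular terms), $u_0(y) = m\, G_p(y, 0)$ with $\alpha_0(n,p) m^{p-1} = \mu(\{0\})$. Evaluating at $|y| = 1$ gives $\lim u(x)/G_p(x,0) = m$ along the subsequence, and since the limit is independent of the subsequence, the full limit exists as $x \to 0$ outside the $p$-thin set $E$.

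The step I expect to be the main obstacle is promoting the ``limit along $\lambda\to 0$ of the rescaled profile at $|y|=1$'' into a genuine limit of $u(x)/G_p(x,0)$ as $x\to 0$ outside $E$, rather than merely along rays or radii. Two issues must be handled: (i) uniqueness of all sequential limits — one needs that any two subsequential rescaling limits coincide, which follows from the classification above plus the Wolff lower bound forcing the correct atom $\mu(\{0\})$ in every limit, so there is genuinely only one profile; and (ii) the passage from ``for every sequence $x_j \to 0$ with $x_j\notin E$, $u(x_j)/G_p(x_j,0)\to m$'' back to the existence of the limit, which is immediate once (i) holds since the exceptional set has already been excised. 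The delicate bookkeeping is making sure the $p$-thin set $E$ controlling the bad behavior of the Wolff potential (from Theorem \ref{Thm:wolff potential upper bdd-singualarity}) is the \emph{same} set outside which the rescaling compactness gives uniform estimates — this is where one invokes that a countable union of sets $p$-thin for the singular behavior, arising at the dyadic scales $\omega_i$, is again $p$-thin, using countable subadditivity of capacity (Lemma \ref{Lem:subadditivity-Lip}(1)) scale by scale. For $p=n$ every estimate is the additive/logarithmic analogue, with $i^{n-1}\mathrm{cap}_n$ replacing $2^{i(n-p)}\mathrm{cap}_p$ throughout, exactly as in the $p=n$ half of the proof of Theorem \ref{Thm:wolff potential upper bdd-singualarity}.
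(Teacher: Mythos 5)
Your overall strategy --- Wolff-potential asymptotics plus a rescaling/compactness argument and the Kichenassamy--V\'eron classification of the singular profile --- is exactly the paper's, but three steps you treat as routine are where the real work lies, and as written each has a genuine gap. First, the classification of the blow-down limit via \cite{KV} requires the a priori bound $|\xi|^{\frac{n-p}{p-1}}u_\lambda(\xi)\le C$ \emph{everywhere} on the rescaled annuli, whereas the Wolff upper bound in \eqref{Equ: main-use-wolff} controls $u$ only \emph{outside} the $p$-thin set from Theorem \ref{Thm:wolff potential upper bdd-singualarity} ($u$ may even be $+\infty$ on a polar set). The paper resolves this by replacing $u$ with the truncation $\hat u=\min\{u,(\hat c+1)|x|^{-\frac{n-p}{p-1}}\}$ and rescaling $\hat u$ instead; the price is that the blow-down then identifies the atom of $\hat\mu=-\Delta_p\hat u$, not of $\mu$, and a separate capacity argument (Lemma \ref{Lem:point-charge}, showing the union of the rescaled thin sets has vanishing capacity, hence $\hat\mu(\{0\})=\mu(\{0\})$) is needed. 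Your proposal skips the truncation and therefore also this lemma; without it the claim ``every limit solves $-\Delta_p u_0=\mu(\{0\})\delta_0$'' is unsupported, and your argument for uniqueness of the sequential limits (``the Wolff lower bound forces the correct atom'') becomes circular.

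Second, the passage from $W^{1,p}_{\mathrm{loc}}$-convergence of the rescalings to the pointwise statement $u(x)/G_p(x,0)\to m$ off a thin set is not ``immediate once uniqueness holds'': Sobolev convergence gives no pointwise information. The paper converts it using the pointwise upper estimate of \cite[Theorem 4.8]{KM94} (supremum controlled by an $L^q$ average plus a Wolff tail), with the Wolff tail handled by the same thin-set construction as the fourth term in the proof of Theorem \ref{Thm:wolff potential upper bdd-singualarity}; you need some such device. Third, the sharp lower bound \eqref{Equ:lower-bdd} with the constant equal to $m$ itself (not ``some constant'') does not follow from $c_1W^\mu_{1,p}\le u$; the paper obtains it from the weak comparison principle via the monotonicity of $m(r)=\inf_{|x|=r}u/(r^{-\frac{n-p}{p-1}}-\gamma_0)$, which simultaneously identifies $m$ with $\liminf_{x\to 0}u(x)/G_p(x,0)$ --- the quantity every subsequential rescaling limit must then be matched against (the paper's Step 3, which uses integral averages and an estimate on $\strokedint W^{\hat\mu_{r_k}}_{1,p}$ rather than your shortcut). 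None of these is fatal to the approach, but all three must be supplied.
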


The asymptotic behavior of $p$-superharmonic functions in \eqref{Equ:p-super-asymptotic} extends those for superharmoic in dimension 2 in \cite{AH73} 
(cf. \cite{MQ21, MQ021} for $n$-superharmonic functions in higher dimensions) and $p$-harmonic functions 
in \cite{KV} at singularities. Our proof is focsued on $p\in (1, n)$ and consists of five steps. In Step 1, we will derive
consequences of the weak comparison principle and draw our attention to 
\begin{equation}\label{Equ:m=liminf}
m = \liminf_{x\to 0} \frac {u(x)}{|x|^{-\frac {n-p}{p-1}}}.
\end{equation}
In Step 2, we recall that, any sequential weak limit of the rescaling is of the form $\lambda|\xi|^{-\frac {n-p}{p-1}}$ for some nonnegative $\lambda$ based
on the uniqueness \cite[Theorem 2.2]{KV}. Then in Step 3, we push forward and 
show that all possible
sequential limits are the same $m|\xi|^{-\frac {n-p}{p-1}}$, where $m$ is given in \eqref{Equ:m=liminf}. Next, in Step 4, we show that
\begin{equation}\label{Equ:p-thin-limit}
\lim_{x\to 0 \text{ and } x\notin E} \frac {u(x)}{|x|^{-\frac {n-p}{p-1}}} = m
\end{equation}
for some subset $E$ that is $p$-thin for the singular behavior at $x_0$ based on Theorem \ref{Thm:wolff potential upper bdd-singualarity}. Finally, in Step 5, 
we calculate what is $m$ in terms of the point charge $\mu(\{0\})$.

\begin{proof}[Proof of Theorem \ref{Thm:main-asymptotic}] The proof for $p=n$ goes similarly in principle, and has been treated, not completely the same though, 
in \cite{MQ21, MQ021}. In particular, Step 5 in the following for $p=n$ seems more explicit than that in \cite{MQ21, MQ021}. Therefore, we will focus on $p\in (1, n)$, 
even though some of the steps (including Step 5) actually work for $p=n$ with no or little changes. We will let $x_0=0$ for simplicity. 

\vskip 0.1in\noindent{\bf Step 1.} \quad First, let us derive consequences including \eqref{Equ:lower-bdd} from the weak comparison principle 
for $p$-Laplace equations (cf. Definition \ref{Def:p-superhar}). Recall
$$
G_p(x, 0) =\left\{\aligned |x|^{-\frac {n-p}{p-1}} & \text{ when } p\in (1, n)\\
- \log |x| & \text{ when } p=n\endaligned\right..
$$
It is easily calculated that
\begin{equation}\label{Equ:p-green}
-\Delta_p G_p(x,0) =\left\{\aligned  (\frac{n-p}{p-1})^{p-1}|\mathbb{S}^{n-1}|\delta_0,  & \quad p\in (1, n)\\
|\mathbb{S}^{n-1}|\delta_0, & \quad p=n\endaligned\right.
= \alpha_0(n, p)\delta_0 \text{ in $\mathbb{R}^n$},
\end{equation}
where $\delta_0$ is the Dirac function at the origin. Obviously $G_p(x, 0)$ is $p$-harmonic away from the origin. 

Notice that, as in \cite{KM94}, the infimum here really means the essential infimum for functions, in fact, in many literatures the notation 
$\text{ess}\liminf$ is used instead. 
To have better idea what $m$ can possibly be, let us consider
\begin{equation}\label{Equ:m(r)}
m(r) = \inf_{|x| = r} \frac {u(x)}{|x|^{-\frac {n-p}{p-1}} - \gamma_0} = \frac {\inf_{|x|=r} u(x)}{r^{-\frac {n-p}{p-1}} - \gamma_0},
\end{equation}
for $\gamma_0 = |3r_0|^{-\frac {n-p}{p-1}}$ and $r\in (0, 3r_0)$. $m(r)$ turns out to be monotonically non-increasing as $r\to 0$ from
the weak comparison principle, similar to \cite[Lemma 3.6]{MQ21} (cf. \cite[Lemma 3.1]{Tol83} and \cite{KV}). 
Hence
\begin{equation}\label{Equ:m-infty=m}
\lim_{r\to 0} m(r) = \liminf_{x\to 0} \frac {u(x)}{|x|^{-\frac {n-p}{p-1}} - \gamma_0} =  \liminf_{x\to 0} \frac {u(x)}{|x|^{-\frac {n-p}{p-1}}} = m\in [0, \infty).
\end{equation}
To establish \eqref{Equ:lower-bdd}, we start with a constant $c_0$ for any $\epsilon >0$ such that  
$$
u(x) \geq (m -\epsilon)|x|^{-\frac {n-p}{p-1}} -c_0 \text{ on $\partial B(0, 3r_0)$}.
$$
Moreover we know, on $\partial B(0, s)$ for any 
$s\in (0, 3r_0)$,
$$
u(x) \geq (m-\epsilon) |x|^{-\frac{n-p}{p-1}} - c_0
$$
at least when $s$ is sufficiently small by the definition of $m$ in \eqref{Equ:m-infty=m}. Therefore we have
$$
u(x) \geq (m-\epsilon)|x|^{-\frac {n-p}{p-1}} - c_0 
$$
for all $x\in B(0, 3r_0)\setminus \{0\}$, which implies \eqref{Equ:lower-bdd} since $\epsilon$ is arbitrarily small.

\vskip 0.1in\noindent{\bf Step 2.} \quad
To perform the rescaling approach as in \cite{KV, MQ21}, we consider  
$$
u_r (\xi) = \frac {u(r\xi)}{r^{-\frac {n-p}{p-1}}} \text{ as $r\to 0$}.
$$
First, due to Theorem \ref{Thm:wolff potential upper bdd-singualarity} and Theorem 
\ref{Thm:main-use-wolff}, there is a subset $E^\mu$ that is $p$-thin for singular behavior and a constant $\hat c$
such that 
\begin{equation}\label{Equ:C-hat}
0 < |x|^\frac{n-p}{p-1} u(x) \leq \hat c
\end{equation}
for all $x\in (B(0, r_0)\setminus E^\mu) \setminus\{0\}\subset \Omega\setminus\{0\}$. Hence, for the sake of the proof, we turn to 
\begin{equation}\label{Equ:u-hat}
\hat u (x) = \min \{ u(x), (\hat c+1)|x|^{-\frac {n-p}{p-1}}\},
\end{equation}
which still is a nonnegative $p$-superharmonic function and therefore
\begin{equation}
-\Delta_p \hat u = \hat \mu \text{ in $\Omega$}
\end{equation}
for some nonnegative finite Radon measure $\hat\mu$ on $\Omega$. Notice that $u(x) \equiv \hat u(x)$ at least away from the subset $E^\mu$.  
Now, for $r>0$, we consider the rescaling as in \cite{KV} (see also \cite{MQ21})
$$
\hat u_r (\xi) = \frac {\hat u(r\xi)}{r^{-\frac {n-p}{p-1}}} \text{ and } - \Delta_p^\xi \hat u_r = \hat \mu_r \text{ in $A_{0, \frac {3r_0}r}$}
$$
where
$$
A_{0, \frac {3r_0}r} = \{\xi\in \mathbb{R}^n: |\xi|\in (0, \frac {3r_0}r)\} \text{ and } d\hat\mu_r (\xi) = r^n d\hat \mu (r\xi).
$$
By the definition of $\hat u (x)$, we now know that, for $r>0$,
\begin{equation}\label{Equ:bdd-uniqueness}
|\xi|^\frac {n-p}{p-1} \hat u_r(\xi) = \frac {\hat u(r\xi)}{ (r|\xi|)^{-\frac{n-p}{p-1}}} \leq \hat c +1 \text{ for all $\xi\in A_{0, \frac{3r_0}r}$,}
\end{equation}
which is important when applying \cite[Theorem 2.2]{KV} to identify $\hat u^\varsigma_\infty = m^\varsigma|\xi|^{-\frac {n-p}{p-1}}$ in 
the following Lemma \ref{Lem:known-to-veron}. 
Meanwhile, consequently, $\hat u_r(\xi)$ is bounded in $A_{s, \frac 1s}$ for any given $s\in (0, 1)$. Moreover, for $s\in (0, 1)$, 
$$
\hat\mu_r(A_{s, \frac 1s}) = \int_{A_{s, \frac 1s}} d\hat\mu_r (\xi) = \int_{B(0, \frac rs)\setminus B(0, rs)} d\hat\mu (x)
\to 0 \text{ as $r\to 0$.}
$$
Now we are ready to state a lemma based on \cite[Theorem 2.2]{KV} and \cite[Theorem 4.3.8]{Vn17}. 

\begin{lemma}\label{Lem:known-to-veron} (\cite{KV, Vn17}) Suppose that, for a sequence $\varsigma = \{r_k\}$ going to $0$, the rescaled sequence
\begin{equation}\label{Equ:w^{1,p}-weak}
\hat u_{r_k}\rightharpoonup \hat u^{\varsigma}_\infty \text{ weakly in $W^{1, p}_{\text{loc}}(\mathbb{R}^n\setminus \{0\})$ as $k\to\infty$}.
\end{equation}
Then, for a subsequence $\{r_k'\}\subset\{r_k\}$, we have
\begin{equation}\label{Equ:w^{1,p}-strong}
\hat u_{r_k'}\to \hat u^{\varsigma}_\infty \text{ strongly in $W^{1, p}_{\text{loc}}(\mathbb{R}^n\setminus \{0\})$ as $k\to\infty$}
\end{equation}
and
\begin{equation}\label{Equ:veron-uniqueness}
\hat u^{\varsigma}_\infty (\xi) = m^{\varsigma} |\xi|^{-\frac {n-p}{p-1}} \text{ in $\mathbb{R}^n$}
\end{equation}
for a nonnegative number $m^{\varsigma}$.
\end{lemma}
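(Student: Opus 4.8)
The plan is to reduce the statement to the already-available uniqueness result in \cite[Theorem 2.2]{KV} (and \cite[Theorem 4.3.8]{Vn17}), the only genuinely hard input, and to supply the two routine analytic steps surrounding it: upgrading weak convergence to strong convergence, and passing the equation to the limit. First I would fix the sequence $\varsigma=\{r_k\}$ with $\hat u_{r_k}\rightharpoonup \hat u^\varsigma_\infty$ weakly in $W^{1,p}_{\mathrm{loc}}(\mathbb{R}^n\setminus\{0\})$ and record the uniform bound \eqref{Equ:bdd-uniqueness}, namely $|\xi|^{\frac{n-p}{p-1}}\hat u_{r_k}(\xi)\le \hat c+1$ on $A_{0,\frac{3r_0}{r_k}}$, which in particular makes $\{\hat u_{r_k}\}$ locally uniformly bounded on $\mathbb{R}^n\setminus\{0\}$. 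I would also recall from Step 2 that $\hat\mu_{r_k}(A_{s,1/s})\to 0$ for every $s\in(0,1)$; since the annuli $A_{s,1/s}$ exhaust $\mathbb{R}^n\setminus\{0\}$, the limit measure of $\hat u^\varsigma_\infty$ on $\mathbb{R}^n\setminus\{0\}$ vanishes, so $\hat u^\varsigma_\infty$ is $p$-harmonic away from the origin.

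Next I would establish the strong convergence \eqref{Equ:w^{1,p}-strong} along a subsequence $\{r_k'\}$. The standard device here is the monotonicity (Minty--Browder) trick for the $p$-Laplacian: using a cutoff $\eta\in C_0^\infty(\mathbb{R}^n\setminus\{0\})$ and testing the equations $-\Delta_p \hat u_{r_k}=\hat\mu_{r_k}$ against $\eta(\hat u_{r_k}-\hat u^\varsigma_\infty)$, the right-hand side is controlled by $\|\hat u_{r_k}-\hat u^\varsigma_\infty\|_{L^\infty(\mathrm{supp}\,\eta)}\hat\mu_{r_k}(\mathrm{supp}\,\eta)$, which tends to $0$ by the two facts just recalled, while the left-hand side, after subtracting the analogous (vanishing) identity for $\hat u^\varsigma_\infty$ and using the elementary monotonicity inequality $\langle |a|^{p-2}a-|b|^{p-2}b,\,a-b\rangle\ge c_p|a-b|^p$ (for $p\ge 2$; the $1<p<2$ case uses the standard reverse Hölder-type variant), forces $\int \eta\,|\nabla \hat u_{r_k}-\nabla \hat u^\varsigma_\infty|^p\,dx\to 0$. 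This yields \eqref{Equ:w^{1,p}-strong}; standard elliptic regularity for $p$-harmonic functions (they are $C^{1,\alpha}_{\mathrm{loc}}$) upgrades it further if needed, and in particular guarantees $\hat u^\varsigma_\infty\in W^{1,p}_{\mathrm{loc}}(\mathbb{R}^n\setminus\{0\})$ is a genuine $p$-harmonic function there.

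Finally, to get \eqref{Equ:veron-uniqueness}, I would invoke \cite[Theorem 2.2]{KV}: a nonnegative $p$-harmonic function on $\mathbb{R}^n\setminus\{0\}$ satisfying the growth bound $|\xi|^{\frac{n-p}{p-1}}\hat u^\varsigma_\infty(\xi)\le \hat c+1$ — which is inherited from \eqref{Equ:bdd-uniqueness} by passing to the limit — is either identically $0$ or, after possibly ruling out the other homogeneous solution via the upper bound near infinity, of the form $m^\varsigma|\xi|^{-\frac{n-p}{p-1}}$ for some constant $m^\varsigma\ge 0$. (The bound \eqref{Equ:bdd-uniqueness} is exactly what excludes the constant solution and the $|\xi|$-growing solution, so only the singular fundamental solution survives; one must check the isolated-singularity classification at $0$, which is the content of \cite[Theorem 2.2]{KV} together with \cite[Theorem 4.3.8]{Vn17}.) The main obstacle is not any of these steps individually but making sure the hypotheses of the cited classification theorem are met — in particular that the relevant annular energy bounds and the two-sided growth control pass cleanly to the weak/strong limit — so the care goes into bookkeeping the uniform estimate \eqref{Equ:bdd-uniqueness} and the vanishing of $\hat\mu_{r_k}$ on compact subsets of $\mathbb{R}^n\setminus\{0\}$ through the limiting process rather than into new ideas.
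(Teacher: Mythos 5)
The paper gives no proof of this lemma at all---it is stated as a citation to \cite[Theorem 2.2]{KV} and \cite[Theorem 4.3.8]{Vn17}---and your reconstruction is exactly the intended argument: the uniform bound \eqref{Equ:bdd-uniqueness} plus $\hat\mu_{r_k}(A_{s,1/s})\to 0$ feed a Minty--Browder monotonicity argument that upgrades weak to strong $W^{1,p}_{\mathrm{loc}}$ convergence and shows the limit is $p$-harmonic in $\mathbb{R}^n\setminus\{0\}$, after which the cited classification of positive singular $p$-harmonic functions yields \eqref{Equ:veron-uniqueness}. The only presentational caveat is that the $p$-harmonicity of $\hat u^\varsigma_\infty$ should be stated as a consequence of (not a preliminary to) the strong gradient convergence, since weak convergence alone does not pass the nonlinear operator to the limit; your Minty step does supply this, so there is no actual gap.
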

For convenience, from now on in our presentation, we will not differentiate a subsequence from the sequence with no loss of generality.

\vskip 0.1in\noindent{\bf Step 3.} \quad In this step, we show that $m^\varsigma = m$ for any sequence $\varsigma$.
From \eqref{Equ:m-infty=m} we know
\begin{equation}\label{Equ:m-inf=m-infty=m}
\lim_{k\to\infty} (\inf_{|\zeta|=|\xi|} \frac{\hat u_{r_k} (\zeta)}{|\zeta|^{-\frac{n-p}{p-1}}})= m \text{ and } 
\lim_{k\to\infty} (\inf_{A_{s, \frac 1s}} \frac{\hat u_{r_k} (\zeta)}{|\zeta|^{-\frac{n-p}{p-1}}})= m
\end{equation}
for any $\xi\neq 0$ and $s\in (0, 1)$. We are now ready to prove the claim that 
$m^\varsigma = m$ for any sequence $\varsigma = \{r_1, r_2, \cdots\}$ going to $0$. Should $\hat u_{r_k}$ be known to converges
to $\hat u^\varsigma_\infty$ pointwisely, this claim would have been completely trivial from \eqref{Equ:m-inf=m-infty=m}. Hence we want to explore 
some convergence following from \eqref{Equ:w^{1,p}-strong}.  From \eqref{Equ:m-inf=m-infty=m} and
$$
\hat u_r (\xi) \geq  (\inf_{|\zeta|=|\xi|} \frac {\hat u_r (\zeta)}{|\zeta|^{-\frac {n-p}{p-1}}})\, |\xi|^{-\frac {n-p}{p-1}} 
$$
we have, for a fixed $s\in (0, 1)$, 
$$
0 \leq \int_{A_{s, \frac 1s}} (\hat u_{r_k} (\xi) -  ((\inf_{|\zeta|=|\xi|} \frac {\hat u_{r_k}(\zeta)}{|\zeta|^{-\frac {n-p}{p-1}}})\, |\xi|^{-\frac {n-p}{p-1}} ) d\xi
\to (m^\varsigma - m)\int_{A_{s, \frac 1s}} |\xi|^{-\frac{n-p}{p-1}} d\xi 
$$
as $k\to\infty$, which implies $m^\varsigma \geq m$. Here we use the fact that 
$$
\int_{A_{s, \frac 1s}} \hat u_{r_k}(\xi) d\xi \to \int_{A_{s, \frac 1s}} m^\varsigma |\xi|^{-\frac {n-p}{p-1}} d\xi
$$
as $k\to\infty$ from \eqref{Equ:w^{1,p}-weak} and the consequence from Lemma \ref{Lem:known-to-veron}. 
On the other hand, by the lower semi-continuity, there is $\xi_k$ with $|\xi_k|=1$ such that
$$
\inf_{|\zeta|=1} \hat u_{r_k} (\zeta) = \hat u_{r_k}(\xi_k).
$$
Then, by Theorem \ref{Thm:main-use-wolff}, for $\xi\in B(\xi_k, \frac 12 \rho)$ for some small and fixed $\rho >0$,
$$
\aligned
\hat u_{r_k} (\xi) - & \inf_{B(\xi_k, 3\rho)}\hat u_{r_k}  \leq c_2(\inf_{B(\xi, \rho)}(\hat u_{r_k} - \inf_{B(\xi_k, 3\rho)}\hat u_{r_k}) 
+ W^{\hat\mu_{r_k}}_{1, p}(\xi, 2\rho)) \\
& \leq c_2(\inf_{B(\xi, \rho)}\hat u_{r_k} - \inf_{B(\xi_k. 3\rho)}\hat u_{r_k} + W^{\hat\mu_{r_k}}_{1, p}(\xi, 2\rho))
\endaligned
$$ 
in the light of \eqref{Equ:wolff-for-small-r} in the proof of Theorem \ref{Thm:wolff potential upper bdd-singualarity}. Notice that
$$
\inf_{B(\xi, \rho)}\hat u_{r_k},  \inf_{B(\xi_k. 3\rho)}\hat u_{r_k}\in [\inf_{A_{1-3\rho, 1+3\rho}}\hat u_{r_k}, \
\hat u_{r_k}(\xi_k)].
$$
and therefore the difference goes to zero as $k\to \infty$ and $\rho\to 0$. 

Let $\xi_k \to \xi_\infty\in \{\xi\in \mathbb{R}^n: |\xi|=1\}$ for simplicity, otherwise one may consider for a subsequence. 
Then, at least for $k$ sufficiently large,
$$
\hat u_{r_k}(\xi) \leq \hat u_{r_k}(\xi_k) + W^{\hat\mu_{r_k}}_{1, p} (\xi, 2\rho) + o(1) \text{ for all $\xi\in B(\xi_\infty, \frac 14\rho)$,}
$$
by \eqref{Equ:m-inf=m-infty=m}, which implies
$$
\strokedint_{B(\xi_\infty, \frac 14\rho)} \hat u_{r_k}(\xi) d\xi \leq \hat u_{r_k}(\xi_k) + \strokedint_{B(\xi_\infty, \frac 14\rho)} W^{\hat\mu_{r_k}}_{1,p}(\xi, 2\rho)d\xi
+ o(1)
$$
and, after taking $k\to\infty$,
$$
m^\varsigma  \strokedint_{B(\xi_\infty, \frac 12\rho)} |\xi|^{-\frac{n-p}{p-1}} d\xi \leq m + \varepsilon(\rho),
$$
where $\varepsilon(\rho)\to 0$ as $\rho\to 0$, provided that
\begin{equation}\label{Equ:average-wolff-good}
\strokedint_{B(\xi_\infty, \frac 14\rho)} W^{\hat\mu_{r_k}}_{1,p}(\xi, 2\rho)d\xi\to 0 \text{ as $k\to\infty$}.
\end{equation}
This implies $m^\varsigma \leq m$ when let $\rho\to 0$. Thus $m^\varsigma = m$. To complete this step we want to verify \eqref{Equ:average-wolff-good}. This
is because 
$$
\aligned
\int_{B(\xi_\infty, \frac 14\rho)}  & W^{\hat\mu_{r_k}}_{1,p}(\xi, 2\rho)d\xi 
= \int_{B(\xi_\infty, \frac 14\rho)} \int_0^{2\rho} \left(\frac {\hat\mu_{r_k} (B(\xi, t))}{t^{n-p}} \right)^\frac 1{p-1}\frac {dt}t\, d\xi\\
& \leq C\rho^\frac {n(p-2)}{p-1} \int_0^{2\rho} \frac 1 {t^\frac {n-1}{p-1}} (\int_{B(\xi_\infty, \frac 14\rho)}\int_{B(\xi, t)} d\hat\mu_{r_k}(\bar\xi)  \, d\xi)^\frac 1{p-1} \, dt \\
& \leq C \rho^\frac {n(p-2)}{p-1} \int_0^{2\rho}\frac 1 {t^\frac {n-1}{p-1}} (\int_{B(\xi_\infty, t+\frac 14\rho)}\int_{B(\bar\xi, t)} d\xi\, d\hat\mu_{r_k}(\bar\xi))^\frac 1{p-1}\, dt \\
& \leq C \rho^{n - \frac {n-p}{p-1}} (\hat\mu_{r_k} (B(\xi_\infty, \frac 94\rho) )^\frac 1{p-1}  = C\rho^{n - \frac {n-p}{p-1}} (\hat\mu (r_k B(\xi_\infty, \frac 94\rho))^\frac 1{p-1} \to 0 \text{ as $k\to\infty$}.
\endaligned
$$

\noindent{\bf Step 4.}\quad  In this step, based on Part 2, we use Theorem \ref{Thm:wolff potential upper bdd-singualarity}
and  \cite[Theorem 4.8]{KM94} to establish the limit as $x\to 0$ away from a set $E$ that is $p$-thin for singular behavior at the origin. 
First, in our notation, we may write
$$
|x|^\frac {n-p}{p-1}\hat u(x) = \hat u_{|x|} (\frac x{|x|}).
$$
From the lower bound \eqref{Equ:lower-bdd} established in Step 1, it suffices to get the estimates from above. By \cite[Theorem 4.8]{KM94}, we have, for $\epsilon>0$
and then appropriately small $\sigma \in(0, \frac 16)$
$$
\aligned
\hat u_{|x|}(\frac x{|x|}) - & (m-\varepsilon)  \leq c((\strokedint_{B(\frac x{|x|}, \sigma)}
|\hat u_{|x|}(\xi)  - (m-\varepsilon)|^qd\xi)^\frac 1q +  |x|^\frac {n-p}{p-1}W^{\hat\mu}_{1, p}(x, 2\sigma |x|)) \\
& \leq  c(\strokedint_{B(\frac x{|x|}, \sigma)} |m|\xi|^{-\frac{n-p}{p-1}}  - (m-\varepsilon)|^qd\xi)^\frac 1q + o(1) \text{ as $|x|\to 0$} \\
& \leq 2\varepsilon
\endaligned
$$
as $x\to 0$ and $x\notin E^{\hat\mu}$ that is $p$-thin for singular behavior at the origin, 
where $q\in (p-1, \frac {np}{n-p})$ is fixed and the argument for the forth term $W^{\hat\mu}_{1, p}(x, (1-\delta)|x|)$ in the proof of 
Theorem \ref{Thm:wolff potential upper bdd-singualarity} is applied to pick up the set $E^{\hat\mu}$ that is $p$-thin for singular
behavior at the origin. Let $E= E^\mu \bigcup E^{\hat\mu}$, which is still $p$-thin for singular behavior at the origin. Thus we are able to end this step and 
conclude
\begin{equation}\label{Equ:m-limit-verified}
\lim_{x\to 0\text{ and } x\notin E} |x|^\frac {n-p}{p-1}u(x) = m.
\end{equation}

\noindent{\bf Step 5.} \quad In this step, based on the rescaling approach in Step 2, Step 3, and \eqref{Equ:m-limit-verified}, we want to identify what is
$m$. For this purpose, let us consider $A_{s_0, \frac 1{s_0}}$ for a fixed small $s_0>0$ and 
$$
\phi\in C^\infty_c(B(0, \frac 1{s_0})) \text{ and } \phi \equiv 1 \text{ in a neighborhood $B(0, \epsilon_0)$ of the origin for some $\epsilon_0 < < s_0$}.
$$
First, for a sequence $r_k\to 0$, from the consequence of Lemma \ref{Lem:known-to-veron} from 
$$\hat u_{r_k}\to \hat u_\infty= m|\xi|^{-\frac {n-p}{p-1}}  \text{ strongly in $W^{1, p}_{\text{loc}}(\mathbb{R}^n\setminus \{0\})$ as $k\to\infty$}, 
$$
we calculate
$$
\aligned
\int_{A_{s_0, \frac 1{s_0}}} & (-\Delta_p \hat u_{r_k})\phi d\xi - \int_{A_{s_0, \frac 1{s_0}}} |\nabla\hat u_{r_k}|^{p-2}\nabla\hat u_{r_k}\cdot
\nabla\phi d\xi \\
& =  \int_{B(0, s_0)}  |\nabla\hat u_{r_k}|^{p-2}\nabla\hat u_{r_k}\cdot \nabla\phi d\xi - \int_{B(0, s_0)} (-\Delta_p \hat u_{r_k})\phi d\xi \\
& =  \int_{B(0, s_0)\setminus B(0, \epsilon_0)}  |\nabla\hat u_{r_k}|^{p-2}\nabla\hat u_{r_k}\cdot \nabla\phi d\xi - \int_{B(0, r_ks_0)}\phi(r_kx)d\hat \mu(x) 
\endaligned
$$
and
$$
\aligned
\int_{A_{s_0, \frac 1{s_0} }} & (-\Delta_p \hat u_\infty)\, \phi d\xi  - \int_{A_{s_0, \frac 1{s_0}}} |\nabla\hat u_\infty|^{p-2}\nabla\hat u_\infty
\cdot \nabla\phi d\xi \\
& =  \int_{B(0, s_0)\setminus B(0, \epsilon_0)}  |\nabla\hat u_\infty|^{p-2}\nabla\hat u_\infty\cdot \nabla\phi d\xi - \alpha_0(n, p)m^{p-1}.
\endaligned
$$
Notice that, taking $k\to\infty$, we have
$$
\int_{B(0, r_ks_0)}\phi(r_kx)d\hat \mu(x)\to\hat\mu(\{0\}).
$$
Therefore
$$
\alpha_0(n, p) m^{p-1} = \hat\mu(\{0\}).
$$
Therefore, to complete the proof of Theorem \ref{Thm:main-asymptotic}, it suffices to use Lemma \ref{Lem:point-charge} below.

\end{proof}

\begin{lemma}\label{Lem:point-charge}
$$
\mu(\{0\}) = \hat \mu(\{0\}).
$$
\end{lemma}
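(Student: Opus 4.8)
The plan is to compare $\mu$ and $\hat\mu$ through a family of cut-offs concentrating at $x_0=0$ and to show that the only possible discrepancy is carried by the thin set $E^{\mu}$ appearing in \eqref{Equ:C-hat}, where it is negligible by the capacity bound in Definition~\ref{Def:quasi-p-thin} and the local gradient summability of Lemma~\ref{Lem:l^p-estimate}. Set $v(x)=(\hat c+1)|x|^{-\frac{n-p}{p-1}}$, so that $\hat u=\min\{u,v\}$ and $-\Delta_pv=0$ in $\mathbb R^n\setminus\{0\}$. Since $u$ is lower semicontinuous and $v$ is continuous, the set $O=\{x\in B(0,r_0)\setminus\{0\}:u(x)>v(x)\}$ is open, $0\notin O$, and $O\subset E^{\mu}$ by \eqref{Equ:C-hat}; on $O$ we have $\hat u=v$, while on $B(0,r_0)\setminus O$ we have $\hat u=u$. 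Choosing radial $\phi_i\in C_0^{\infty}(B(0,2^{-i}))$ with $\phi_i\equiv 1$ on $B(0,2^{-i-1})$, $0\le\phi_i\le 1$, $|\nabla\phi_i|\le C2^i$ and $\nabla\phi_i$ supported in $\omega_i(0)$, dominated convergence gives $\int\phi_i\,d\mu\to\mu(\{0\})$ and $\int\phi_i\,d\hat\mu\to\hat\mu(\{0\})$, since $\mu,\hat\mu$ are finite. Hence it suffices to show $\mathcal E_i\to 0$, where
$$
\mathcal E_i=\int\phi_i\,d\mu-\int\phi_i\,d\hat\mu=\int\big(|\nabla u|^{p-2}\nabla u-|\nabla\hat u|^{p-2}\nabla\hat u\big)\cdot\nabla\phi_i\,dx .
$$
Since $\nabla u=\nabla\hat u$ a.e.\ on $\{u\le v\}$ and $\hat u=v$ on $O$, the integrand vanishes a.e.\ off $\omega_i(0)\cap O\subset\omega_i(0)\cap E^{\mu}$, so
$$
|\mathcal E_i|\le C2^i\Big(\int_{\omega_i(0)\cap E^{\mu}}|\nabla u|^{p-1}\,dx+\int_{\omega_i(0)\cap E^{\mu}}|\nabla v|^{p-1}\,dx\Big).
$$

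Next I would extract the smallness $\lambda_i:=2^{ni}|E^{\mu}\cap\omega_i(0)|\to 0$. By Lemma~\ref{Lem:subadditivity-Lip}(4), \eqref{Equ:quasi-p-thin} says $\sum_i2^{i(n-p)}cap_p(E^{\mu}\cap\omega_i(0),\Omega_i(0))<\infty$, hence $cap_p(E^{\mu}\cap\omega_i(0),\Omega_i(0))=o(2^{-i(n-p)})$; the scale invariant isocapacitary inequality $|A|\le C\,cap_p(A,\Omega_i(0))^{\frac n{n-p}}$ for $A\subset\subset\Omega_i(0)$, a consequence of the Sobolev inequality, then gives $\lambda_i\to 0$. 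For the $v$-term, $|\nabla v|\le C2^{i\frac{n-1}{p-1}}$ on $\omega_i(0)$, so $2^i\int_{\omega_i(0)\cap E^{\mu}}|\nabla v|^{p-1}\le C2^{in}|E^{\mu}\cap\omega_i(0)|=C\lambda_i\to 0$. For the $u$-term, fix $q_0\in(1,\tfrac n{n-1})$; rescaling $u$ to the unit shell, applying Lemma~\ref{Lem:l^p-estimate} on finitely many balls covering it, and using \eqref{Equ:C-hat} to bound the essential infimum of $u$ over a shell of radius comparable to $2^{-i}$ by $C2^{i\frac{n-p}{p-1}}$, one obtains $\int_{\Omega_i(0)}|\nabla u|^{q_0(p-1)}\,dx\le C2^{i(q_0(n-1)-n)}$; Hölder then yields
$$
2^i\int_{\omega_i(0)\cap E^{\mu}}|\nabla u|^{p-1}\,dx\le C2^{i(n-\frac n{q_0})}|E^{\mu}\cap\omega_i(0)|^{1-\frac1{q_0}}=C\lambda_i^{\,1-\frac1{q_0}}\to 0 .
$$
Therefore $\mathcal E_i\to 0$ and $\mu(\{0\})=\hat\mu(\{0\})$. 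For $p=n$ the argument is identical, using the $n$-thinness \eqref{Equ:quasi-n-thin} and the (exponential type) isocapacitary bound for $cap_n$ to get $\lambda_i\to 0$.

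The main obstacle is the passage from the capacitary smallness granted by Definition~\ref{Def:quasi-p-thin} to the Lebesgue measure smallness $\lambda_i\to 0$: it relies on having the correct scale invariant isocapacitary inequality on the shells $\Omega_i(0)$ (and its weaker, exponential counterpart when $p=n$), after which the two gradient estimates become routine bookkeeping with the Caccioppoli-type bounds of Lemma~\ref{Lem:l^p-estimate} and the uniform bound \eqref{Equ:C-hat}. A secondary technical point to be checked carefully is the identity $\nabla u=\nabla\hat u$ a.e.\ on $\{u\le v\}$, which one justifies locally away from the origin, where $u$ and $\hat u$ coincide with a common bounded $p$-superharmonic function lying in $W^{1,p}_{\mathrm{loc}}$.
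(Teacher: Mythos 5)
Your proof is correct, and it reaches the conclusion by a route that is structurally different from the paper's, although both rest on the same two pillars: the discrepancy between $\mu$ and $\hat\mu$ is carried by the thin set $E^{\mu}$ (since $u\equiv\hat u$ off it), and $p$-thinness for the singular behavior forces the rescaled Lebesgue measure of $E^{\mu}$ in dyadic shells to be negligible. The paper implements this by rescaling: it tests $\mu_r-\hat\mu_r$ against a fixed $\phi$, bounds the difference by an integral over $\tfrac1r E^{\mu}$ using the uniform $L^{p-1}$ gradient bounds of Lemma \ref{Lem:l^p-estimate}, and then shows that $E_\infty=\bigcap_i\bigcup_{j\ge i}2^jE^{\mu}$ has zero $p$-capacity (via countable subadditivity, scaling, and Choquet continuity), hence zero Lebesgue measure. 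You instead test directly against shrinking cut-offs $\phi_i$ with $\nabla\phi_i$ supported in $\omega_i(0)$, and convert capacitary smallness into the quantitative statement $2^{ni}\lvert E^{\mu}\cap\omega_i\rvert\to0$ via the scale-invariant isocapacitary inequality, finishing with a H\"older argument at exponent $q_0(p-1)$ with $q_0\in(1,\tfrac n{n-1})$. Your version buys a more quantitative and self-contained error estimate: the choice of $q_0$ strictly inside the integrability range of Lemma \ref{Lem:l^p-estimate} is exactly what is needed, since $\lvert\nabla u\rvert^{p-1}$ is only barely locally integrable, and it sidesteps the limsup-set capacity argument (which in the paper requires the $E_j$ to be closed and an appeal to \cite[Theorem 2.2 (iv)]{HKM}); the paper's version buys brevity by reusing the rescaling framework already set up in Steps 2--3 of the main proof. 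The two technical points you flag at the end (the isocapacitary inequality on shells, and $\nabla u=\nabla\hat u$ a.e.\ on $\{u\le v\}$) are indeed the only ones needing care, and both are standard; your treatment of them is adequate.
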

\begin{proof} Let
$$
u_r(\xi) = \frac {u(r\xi)}{r^{-\frac {n-p}{p-1}}} \ \text{ and } \hat u_r(\xi) = \frac {\hat u(r\xi)}{r^{-\frac {n-p}{p-1}}}.
$$
Then, recall from Step 2,
$$
-\Delta_p^\xi \hat u_r(\xi) = \hat \mu_r
$$
and
$$
\hat u_{r} (\xi) \to \hat u_\infty (\xi), \  \hat\mu_{r} \to \hat\mu(\{0\})\delta_0 \ \text{ and } -\Delta_p^\xi \hat u_\infty = \hat\mu(\{0\})\delta_0
$$
as $r\to 0$. Similarly, we also have
$$
-\Delta_p^\xi u_r  = \mu_r \text{ and } \mu_r \to \mu(\{0\}) \delta_0  \text{ as $r\to 0$}
$$ 
where $d\mu_r (\cdot) = r^n d\mu(r\cdot)$. Therefore it suffices to show 
\begin{equation}\label{Equ:same-limit}
\lim_{r\to 0} \mu_r = \lim_{r\to 0} \hat\mu_r.
\end{equation}
For a testing function $\phi$, we have
$$
\aligned
\mu_r (\phi) -\hat\mu_r(\phi) & = \int ( |\nabla u_r|^{p-2}\nabla u_r \cdot \nabla\phi - |\nabla \hat u_r|^{p-2}\nabla \hat u_r \cdot \nabla\phi) \\
& \leq C (\int_{\frac 1r E^\mu}|\nabla \phi|^\frac {p-1}{p-2})^\frac {p-2}{p-1}\\ 
\endaligned
$$
because that $u_r$ and $\hat u_r$ are identical outside $\frac 1r E^{\mu}$ and that
\begin{equation}\label{Equ:l^{p-1}-bound}
\|\nabla \hat u_r\|_{L^{p-1} (\text{supp}\phi)} + \|\hat u_r\|_{L^{p-1}(\text{supp}\phi)} \leq C,
\end{equation}
for some $C$ that is independent of $r$, due to Lemma \ref{Lem:l^p-estimate}.
Recall that $E^\mu$ is $p$-thin for the singular behavior at the origin according to \eqref{Equ:C-hat}.
\\

Let 
\begin{equation}
E_j = 2^j E^\mu \text{ and }E_\infty = \bigcap_{i=1}^\infty \bigcup_{j\geq i} E_j.
\end{equation}
Then we have only need to show that the Lebesgue measure of $E_\infty$ vanishes.
In fact, we want to show that $E_\infty$ is of zero $p$-capacity due to the fact that $E^\mu$ is $p$-thin for singular behavior at the origin. This is because
\begin{equation}
\bigcup_{j\geq 1}E_j\cap\bar A_{2^{-k}, 2^k} \supset \bigcup_{j\geq 2} E_j\cap \bar A_{2^{-k}, 2^k} \cdots\supset\bigcup_{j\geq i} 
E_j\cap \bar A_{2^{-k}, 2^k} \cdots \to E_\infty\cap\bar A_{2^{-k}, 2^k}
\end{equation}
where each $E_j$ is closed due to the lower semi-continuity of $p$-superharmonicity and
\begin{equation}
\aligned
cap_p( \bigcup_{j\geq i} & E_j\cap\bar A_{2^{-k},  2^k}, \mathbb{R}^n)  \leq \sum_{l=-k+1}^{k} \sum_{j\geq i} cap_p(E_j \cap \omega_l, \Omega_l) \\
&  \leq  \sum_{l=-k+1}^k 2^{(n-p)l}\sum_{j\geq i}^{\infty} \frac{cap_p(E\cap \omega_{l+j}, \Omega_{l+j})}{cap_p(\omega_{l+j}, \Omega_{l+j})}\\
& \leq  \sum_{l=-k+1}^k 2^{(n-p)l}\sum_{j\geq i+l}^{\infty} \frac{cap_p((E\cap \omega_{j}, \Omega_{j})}{cap_p(\omega_{j}, \Omega_{j})} \to 0
\endaligned
\end{equation}
as $i\to\infty$ for any fixed $k$ according to Definition \ref{Def:quasi-p-thin}. Therefore, by \cite[Theorem 2.2 (iv)]{HKM},
$$
cap_p(E_\infty\cap\bar A_{2^{-k}, 2^k},  \mathbb{R}^n) =\lim_{i\to\infty} cap_p (\bigcup_{j\geq i} E_j\cap\bar A_{2^{-k},  2^k}, A_{2^{-k-1}, 2^{k+1}},  \mathbb{R}^n) =0
$$
for each fixed $k$, which implies $cap_p(E_\infty,  \mathbb{R}^n) = 0$. Thus, according to \cite[Lemma 2.10]{HKM}, for instance, 
the Lebesgue measure of $E_\infty$ vanishes. This completes the proof.
\end{proof}


\subsection{Applications to fully nonlinear elliptic equations}\label{Subsec:fully-nonlinear-applications}
 
In this subsection we collect some corollaries of Theorem \ref{Thm:main-asymptotic} on the solutions to fully nonlinear elliptic equations.
It is interesting to compare the intermediate positivity cones $\mathcal{A}^{(p)}$ with those in the study of fully nonlinear 
equations. Recall
$$
\Gamma^k = \{\lambda = \{(\lambda_1, \lambda_2, \cdots, \lambda_n)\in \mathbb{R}^n: \sigma_1(\lambda)\geq 0, \sigma_2(\lambda)\geq 0, \cdots, \sigma_k(\lambda)\geq 0\}
$$
for $k=1, 2, \cdots, n$, where $\sigma_l$ is the elementary symmetric functions for $l=1, 2, \cdots, n$. It is easily seen that $\mathcal{A}^{(2)} =\Gamma^1$ 
and $\mathcal{A}^{(p)}$ approaches $\Gamma^n$ as $p\to\infty$. Hence, for any positive cone $\Gamma$ between $\Gamma^1$ and 
$\Gamma^n$, we may consider
\begin{equation}\label{Equ:p-index}
p_\Gamma = \max \{p: \Gamma \subset \mathcal{A}^{(p)}\}.
\end{equation}
$p_\Gamma$ is useful when one uses $p$-superharmonic functions to study solutions to a class of fully nonlinear elliptic equations. We first realize

\begin{lemma}\label{Lem:fully-p-gamma} Suppose that $u$ is nonnegative and that $u\in C^2 (\Omega\setminus S)$ for a compact subset $S$ of a
bounded domain $\Omega$ in $\mathbb{R}^n$. And suppose $\lim_{x\to S}u(x) = +\infty.$ 
Assume -$\lambda (D^2 u(x)) \in \Gamma$ for $p_\Gamma \in (1, n]$. Then $u$ is a $p_\Gamma$-superharmonic function in $\Omega$.
\end{lemma}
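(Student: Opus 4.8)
The plan is to show that $u$, extended to $\Omega$ by setting $u\equiv+\infty$ on $S$, is $p_\Gamma$-superharmonic first on $\Omega\setminus S$ and then across $S$ by an exhaustion argument. For the first part, the key input is the defining feature of the cones $\mathcal{A}^{(p)}$: a $C^2$ function $v$ on an open set $U$ with $-\lambda(D^2 v(x))\in\mathcal{A}^{(p)}$ for every $x\in U$ is $p$-superharmonic in $U$. Since $p_\Gamma$ is chosen in \eqref{Equ:p-index} so that $\Gamma\subset\mathcal{A}^{(p_\Gamma)}$, the hypothesis $-\lambda(D^2 u(x))\in\Gamma$ on $\Omega\setminus S$ gives $-\lambda(D^2 u(x))\in\mathcal{A}^{(p_\Gamma)}$ there, and hence $u$ is $p_\Gamma$-superharmonic in $\Omega\setminus S$. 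Concretely, at a point $x$ with $\nabla u(x)\ne 0$ one has, with $e=\nabla u(x)/|\nabla u(x)|$,
\[
-\Delta_{p_\Gamma}u(x)=-|\nabla u(x)|^{p_\Gamma-2}\bigl(\operatorname{tr}(D^2 u(x))+(p_\Gamma-2)\langle D^2 u(x)\,e,e\rangle\bigr)\ge 0,
\]
because membership in $\mathcal{A}^{(p_\Gamma)}$ says exactly that the bracketed quantity is $\le 0$ for every unit vector; at a point with $\nabla u(x)=0$ the same membership is the algebraic inequality (in $\operatorname{tr}(D^2 u)$ and the extreme eigenvalue of $D^2 u$) that makes $u$ a viscosity supersolution of $-\Delta_{p_\Gamma}u=0$ there, the inequality being inherited by any $C^2$ test function touching $u$ from below, by eigenvalue monotonicity and the monotonicity of that inequality. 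One then invokes the equivalence of viscosity supersolutions and $p$-superharmonic functions (see e.g. \cite{Lind06}).

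For the removal of $S$, note that since $\lim_{x\to S}u=+\infty$ the extended $u$ is lower semicontinuous on $\Omega$, is not identically $+\infty$ (as $u\in C^2(\Omega\setminus S)$ and $\Omega\setminus S\ne\emptyset$), and is bounded below by $0$. Let $D\subset\subset\Omega$ be a domain and $h\in C(\bar D)$ a $p_\Gamma$-harmonic function in $D$ with $h\le u$ on $\partial D$; we must check $h\le u$ in $D$. For small $\varepsilon>0$ set $D_\varepsilon=D\setminus\overline{\{x:\operatorname{dist}(x,S)<\varepsilon\}}$, so $D_\varepsilon\subset\subset\Omega\setminus S$. On $\partial D_\varepsilon$ one is either on $\partial D$, where $h\le u$, or on $\{\operatorname{dist}(\cdot,S)=\varepsilon\}$, where $u$ exceeds $\max_{\bar D}h$ once $\varepsilon$ is small enough; so $h\le u$ on $\partial D_\varepsilon$. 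Applying the comparison principle from the first part on each component of $D_\varepsilon$ gives $h\le u$ in $D_\varepsilon$, and letting $\varepsilon\to 0$ yields $h\le u$ in $D\setminus S$, while $h\le u=+\infty$ on $S$. Thus the comparison principle holds for every $D\subset\subset\Omega$ and $u$ is $p_\Gamma$-superharmonic in $\Omega$. (Nonnegativity of $u$ is used only to match the hypotheses of the results that invoke this lemma, and $p_\Gamma\in(1,n]$ merely keeps us inside the standing range $1<p\le n$.)

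The main obstacle is the critical-point analysis in the first part: verifying that the gradient-free cone $\mathcal{A}^{(p_\Gamma)}$ coincides with the set of Hessian spectra for which the degenerate operator $\Delta_{p_\Gamma}$, in its semicontinuous-envelope (viscosity) formulation, admits a $C^2$ function as a supersolution at a zero-gradient point, and that this property passes to test functions. Everything else — the computation where $\nabla u\ne0$ and the exhaustion in the second part — is routine.
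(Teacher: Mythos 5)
Your proposal is correct and follows essentially the same route as the paper: both reduce the comparison principle on an arbitrary $D\subset\subset\Omega$ to comparison on a compact region avoiding $S$ (the paper via the contradiction set $D_1=\{v>u\}$, whose closure must miss $S$ because $u\to+\infty$ there; you via the $\varepsilon$-neighborhood exhaustion $D_\varepsilon$), using that $\Gamma\subset\mathcal{A}^{(p_\Gamma)}$ makes $u$ $p_\Gamma$-superharmonic away from $S$. Your write-up is in fact more explicit than the paper's at the one nontrivial point it leaves implicit, namely why the cone condition yields $p_\Gamma$-superharmonicity of a $C^2$ function including at critical points of $\nabla u$ (the paper defers this to \cite{MQ021}).
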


\begin{proof} By the assumptions $u$ is lower semi-continuous and clearly $u\not\equiv+\infty$ in $\Omega$, if we set $u=+\infty$ on $S$. 
Then, according to Definition \ref{Def:p-superhar}, it
suffices to check the comparison principle. For any domain $D\subset\subset \Omega$,  let $v$ be a $p_\Gamma$-harmonic function in $D$ and $v\leq u$ on
$\partial D$. We want to verify that $v\leq u$ in $D$. Assume otherwise, then there is a domain $D_1\subset D$, where $v> u$ in $D_1$ and $v = u$ on 
$\partial D_1$.  It is easily seen that $\bar D_1\subset \Omega \setminus S$, which violates the comparison principle for $p_\Gamma$-superharmonic 
functions on $D_1$. 
\end{proof}

We remark that, from the proof of \cite[Lemma 3.2 and 3.3]{MQ021} (see also \cite[Proposition 1.1]{BV89} when $S$ is an isolated point), 
it is easily seen that $-\Delta_{p_\Gamma} u$ is a Radon measure under even somewhat weaker assumptions. Consequently, we have
 
\begin{corollary}\label{Cor:app-fully-nonlinear} 
Suppose that $u$ is nonnegative and that $u\in C^2 (\Omega\setminus S)$ for a compact subset $S$ of a bounded domain $\Omega$ in $\mathbb{R}^n$. 
And suppose $\lim_{x\to S}u(x) = +\infty.$ 
Assume -$\lambda (D^2 u(x)) \in \Gamma$ for $p_\Gamma \in (1, n]$. Then $S$ is of Hausdorff dimension not greater than $n-p_\Gamma$ and, 
for $x_0\in S$, there are a subset $E$ that is $p_\Gamma$-thin 
for the singular behavior at $x_0$ and a nonnegative number $m$ such that
$$
\lim_{x\to x_0 \text{ and } x\notin E}  \frac {u(x)}{G_{p_\Gamma}(x, x_0)}= m.
$$
Moreover $u(x) \geq m G_{p_\Gamma}(x, x_0) - c_0$ in some neighborhood of $x_0$.
\end{corollary}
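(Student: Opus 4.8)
The plan is to reduce the whole statement to Theorem~\ref{Thm:main-asymptotic}. First I would invoke Lemma~\ref{Lem:fully-p-gamma}: extending $u$ by $+\infty$ on $S$, the hypotheses $-\lambda(D^2u)\in\Gamma\subset\mathcal{A}^{(p_\Gamma)}$ and $p_\Gamma\in(1,n]$ force $u$ to be $p_\Gamma$-superharmonic in $\Omega$. Next, by the remark following that lemma (the argument of \cite[Lemma 3.2 and 3.3]{MQ021}, and \cite[Proposition 1.1]{BV89} in the isolated-point case) together with \eqref{Equ:p-laplace-measure}, $-\Delta_{p_\Gamma}u=\mu$ for a nonnegative Radon measure $\mu$ on $\Omega$; indeed away from $S$ the function is $C^2$ with $-\Delta_{p_\Gamma}u\ge 0$ classically, so the only delicate point is that no extra negative mass appears across $S$. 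Since only the behavior near a fixed $x_0\in S$ matters, I would shrink $\Omega$ to a bounded neighborhood of $x_0$ so that $\mu$ becomes finite, which is all Theorem~\ref{Thm:main-asymptotic} requires.

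For the Hausdorff dimension bound I would argue as follows. Since $\lim_{x\to S}u(x)=+\infty$, we have $S\subset\{u=+\infty\}$; but for a $p_\Gamma$-superharmonic function the set $\{u=+\infty\}$ has $p_\Gamma$-capacity zero (the local summability in Lemma~\ref{Lem:l^p-estimate} already shows $u<\infty$ almost everywhere, and the stronger statement that $u$ is finite quasi-everywhere is standard, cf. \cite{HKM}). A set of vanishing $p_\Gamma$-capacity has Hausdorff dimension at most $n-p_\Gamma$ (cf. \cite{HKM, AH96}), so $\dim_{\mathcal H}S\le n-p_\Gamma$.

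It then remains to apply Theorem~\ref{Thm:main-asymptotic} with $p=p_\Gamma$ to the nonnegative $p_\Gamma$-superharmonic function $u$ and the finite nonnegative Radon measure $\mu$: for $x_0\in S$ this produces a set $E$ that is $p_\Gamma$-thin for the singular behavior at $x_0$, a nonnegative number $m$ determined by $\alpha_0(n,p_\Gamma)m^{p_\Gamma-1}=\mu(\{x_0\})$ (with $\alpha_0$ from \eqref{Equ:p-green}), the limit $\lim_{x\to x_0,\ x\notin E}u(x)/G_{p_\Gamma}(x,x_0)=m$ with $G_{p_\Gamma}$ read off from \eqref{Equ:G-p}, and the lower bound $u(x)\ge m\,G_{p_\Gamma}(x,x_0)-c_0$ in a neighborhood of $x_0$. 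This is precisely the asymptotic assertion of the corollary.

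I expect the two reductions to be the only genuine obstacles: verifying (i) that the distributional $p_\Gamma$-Laplacian of $u$ does not pick up spurious negative mass on $S$ in spite of the blow-up there, so that $\mu$ really is a nonnegative Radon measure — this is where the cited local arguments and Lemma~\ref{Lem:l^p-estimate} do the work — and (ii) the capacity statement $cap_{p_\Gamma}(\{u=\infty\})=0$ underlying the dimension estimate. Everything past those two points is a direct citation of Lemma~\ref{Lem:fully-p-gamma} and Theorem~\ref{Thm:main-asymptotic}.
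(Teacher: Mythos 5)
Your proposal is correct and follows essentially the same route as the paper: Lemma \ref{Lem:fully-p-gamma} to get $p_\Gamma$-superharmonicity, the remark based on \cite{MQ021, BV89} to see that $-\Delta_{p_\Gamma}u$ is a nonnegative Radon measure, Theorem \ref{Thm:main-asymptotic} for the asymptotics, and the polar-set/capacity observation (cf. \cite[Theorem 10.1]{HKM} and \cite[Lemma 2.1]{MQ021}) for the Hausdorff dimension bound. The paper treats the corollary as an immediate consequence of exactly these ingredients, so nothing further is needed.
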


It is well-known that the polar set of a $p$-superharmonic function is of zero $p$-capacity (see, for example, \cite[Theorem 10.1]{HKM}). Therefore, in the light of 
\cite[Lemma 2.1]{MQ021} (see also \cite{AM72, SY88}), the polar set of a $p$-superharmonic function is of Hausdorff dimension not greater than $n-p$.
It turns out we have a rather effective way to calculate $p_\Gamma$ for a cone associated with a homogeneous, symmetric, convex function of $n$-variables.

\begin{lemma}\label{Lem:calculate-p-index} Suppose that $\Gamma$ is a cone given by a homogeneous, symmetric, convex function $F(\lambda)$ on 
$\mathbb{R}^n$. Let $(-\frac {n-1}{p-1}, 1, 1, \cdots, 1)\in \partial\Gamma = \{\lambda\in \mathbb{R}^n: F(\lambda)=0\}$. Then 
$\Gamma\subset\mathcal{A}^{(p)}$ and $p_\Gamma = p$.
\end{lemma}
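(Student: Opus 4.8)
The plan is to compute $\mathcal{A}^{(p)}$ explicitly enough to compare it with $\Gamma$. Recall that the cone $\mathcal{A}^{(p)}$ arises from writing $-\Delta_p$ acting on a radial profile: if $w(x) = f(|x|)$, then $-\Delta_p w$ has a definite sign governed by the eigenvalues of $-D^2 w$, and the relevant cone is the set of Hessian eigenvalue vectors for which the $p$-Laplacian of the associated radial-type test function keeps a fixed sign. Concretely, the fundamental solution $G_p(x,0)=|x|^{-(n-p)/(p-1)}$ has Hessian eigenvalues proportional to $(-\tfrac{n-1}{p-1},1,1,\dots,1)$ (one radial direction with a large negative second derivative, $n-1$ tangential directions), up to a positive scalar and sign conventions; this is exactly the boundary vector singled out in the hypothesis. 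So the first step is to pin down the identity $\mathcal{A}^{(p)} = \{\lambda : \text{the } p\text{-Laplacian-type functional } Q_p(\lambda)\ge 0\}$ and to verify that $(-\tfrac{n-1}{p-1},1,\dots,1)$ lies on $\partial\mathcal{A}^{(p)}$, i.e. $Q_p(-\tfrac{n-1}{p-1},1,\dots,1)=0$. This is a direct computation using \eqref{Equ:p-green}.

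Second, I would use convexity of $\mathcal{A}^{(p)}$ and of $\Gamma$ together with their symmetry. Both cones are closed, convex, symmetric under permutation of coordinates, and scale-invariant (rays). A convex symmetric cone is determined by its boundary, and for such cones the containment $\Gamma\subset\mathcal{A}^{(p)}$ can be detected on a single extremal ray provided that ray is a ``corner'' or generating direction. The key structural fact to invoke is that $\partial\mathcal{A}^{(p)}$ is itself (near the relevant ray) the graph of a symmetric convex function, and the vector $v=(-\tfrac{n-1}{p-1},1,\dots,1)$ — having $n-1$ equal coordinates — is precisely the type of point where the supporting hyperplane of a symmetric convex cone is pinned down. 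So the plan is: show that any closed convex symmetric cone $\Gamma$ with $v\in\partial\Gamma$ must satisfy $\Gamma\subset\mathcal{A}^{(p)}$, because otherwise the supporting hyperplane to $\Gamma$ at $v$ would cut into $\mathcal{A}^{(p)}$, contradicting that $v\in\partial\mathcal{A}^{(p)}$ and that $\mathcal{A}^{(p)}$ is the maximal such cone in the relevant family. Here I would lean on the characterization that $\mathcal{A}^{(p)}$ is the largest cone on whose boundary $v$ sits among the family of admissible operator cones — this is essentially the content of how $\mathcal{A}^{(p)}$ was defined in the paper, so the inclusion $\Gamma\subset\mathcal{A}^{(p)}$ follows, and then $p_\Gamma\ge p$ by definition \eqref{Equ:p-index}.

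Third, for the reverse inequality $p_\Gamma\le p$, I would argue that if $\Gamma\subset\mathcal{A}^{(p')}$ for some $p'>p$, then in particular $v\in\mathcal{A}^{(p')}$. But the cones $\mathcal{A}^{(p')}$ are monotone in $p'$ in a way that moves the fundamental-solution ray: the boundary vector of $\mathcal{A}^{(p')}$ is $(-\tfrac{n-1}{p'-1},1,\dots,1)$, and since $t\mapsto -\tfrac{n-1}{t-1}$ is strictly increasing, the vector $v$ corresponding to exponent $p$ lies strictly \emph{outside} $\mathcal{A}^{(p')}$ for $p'>p$ (its first coordinate is too negative). Hence $\Gamma\not\subset\mathcal{A}^{(p')}$, giving $p_\Gamma\le p$, and combined with the previous step $p_\Gamma=p$ and $\Gamma\subset\mathcal{A}^{(p)}$.

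The main obstacle I anticipate is the second step: making rigorous the claim that having $v$ on the boundary forces the whole convex symmetric cone inside $\mathcal{A}^{(p)}$. This needs the precise description of $\mathcal{A}^{(p)}$ (in particular that it is convex, which is a known but nontrivial fact for the $p$-Laplacian admissibility cone, and that $v$ is a smooth boundary point with a unique supporting hyperplane that is itself symmetric). One must rule out the possibility that $\Gamma$ ``pokes out'' of $\mathcal{A}^{(p)}$ along some other ray while still touching $\partial\mathcal{A}^{(p)}$ at $v$; symmetry plus convexity of both cones plus uniqueness of the supporting hyperplane at $v$ should close this, since the common supporting hyperplane at $v$ separates, and any symmetric convex cone through $v$ with $v$ on its boundary has that same hyperplane as a support. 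The computations in steps one and three are routine; the conceptual care is all in step two.
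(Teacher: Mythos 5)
Your Steps 1 and 3 are fine and agree with what the paper does: the ray through $v=(-\tfrac{n-1}{p-1},1,\dots,1)$ lies on $\partial\mathcal{A}^{(p)}$, and since $\mathcal{A}^{(p')}$ for $p'>p$ no longer contains that ray, $p_\Gamma\le p$ once the inclusion $\Gamma\subset\mathcal{A}^{(p)}$ is known. The gap is exactly where you flagged it, in Step 2, and your proposed way of closing it does not work. You argue that $\mathcal{A}^{(p)}$ is ``the maximal cone in the relevant family with $v$ on its boundary'' and that ``any symmetric convex cone through $v$ with $v$ on its boundary has that same hyperplane as a support.'' The first claim is not the definition of $\mathcal{A}^{(p)}$ (it is a concrete cone, essentially $\{\lambda:\sigma_1(\lambda)+(p-2)\lambda_{\min}\ge 0\}$, i.e.\ the intersection of the half-spaces with normals the permutations of $(p-1,1,\dots,1)$), so invoking maximality is circular. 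The second claim is false for general symmetric convex cones: for $n=3$, $p=3$, the cone $\{\lambda:\lambda_i+\lambda_j\ge 0 \text{ for all } i\ne j\}$ is closed, convex and symmetric, has $v=(-1,1,1)$ on its boundary, but $v$ sits on an edge there and admits many supporting hyperplanes (e.g.\ $\{\lambda_1+\lambda_2=0\}$), not only $\mathcal{P}=\{2\lambda_1+\lambda_2+\lambda_3=0\}$. So symmetry and convexity alone do not pin down the supporting hyperplane at $v$, and your separation argument as stated does not go through.

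What closes the gap in the paper is precisely the hypothesis you never use: $\partial\Gamma$ is the zero set of a \emph{homogeneous, symmetric} function $F$. Symmetry of $F$ gives $\partial_{\lambda_2}F=\cdots=\partial_{\lambda_n}F$ at the point $(-\tfrac{n-1}{p-1}t,t,\dots,t)$, and Euler's identity for the homogeneous $F$ (which vanishes along the whole ray) gives
$-\tfrac{n-1}{p-1}\partial_{\lambda_1}F+(n-1)\partial_{\lambda_k}F=0$; together these force $\nabla F$ to be parallel to $(p-1,1,\dots,1)$, so the supporting hyperplane of the convex cone $\Gamma$ along that ray is exactly $\mathcal{P}$. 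Convexity then puts $\Gamma$ in the half-space $\{(p-1)\lambda_1+\lambda_2+\cdots+\lambda_n\ge 0\}$, and symmetry puts it in all the permuted half-spaces, i.e.\ in $\mathcal{A}^{(p)}$. (As an aside, the inclusion can also be proved for an arbitrary closed symmetric convex cone with $v\in\partial\Gamma$ by a duality argument: any supporting normal $\nu$ at $v$ satisfies $\sigma_1(\nu)=(1+\tfrac{n-1}{p-1})\max_j\nu_j$, and averaging the permutations of $\nu$ that place $\max_j\nu_j$ in the first slot produces a positive multiple of $(p-1,1,\dots,1)$ inside the dual cone. But that is a different argument from the one you sketch, and some such argument is needed; the appeal to maximality and to uniqueness of the supporting hyperplane cannot stand as written.)
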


\begin{proof} Due to the symmetric property with respect to the variables $\lambda = (\lambda_1, \lambda_2, \cdots, \lambda_n)$, we only need to 
verify that the cone $\Gamma$ is supported by the hyperplane 
$$
\mathcal{P} = \{\lambda\in \mathbb{R}^n: (p-1)\lambda_1 + \lambda_2 +\cdots +\lambda_n=0\},
$$ 
which clearly supports the cone $\mathcal{A}^{(p)}$ and is with the normal vector $(p-1, 1, 1, \cdots, 1)$. By the assumption, we know that
$$
F( - \frac {n-1}{p-1}t, t, \cdots, t) = 0
$$
for all $t\geq 0$. To verify that $\mathcal{P}$ supports $\Gamma$ along the ray $\mathcal{R} = t(-\frac {n-1}{p-1}, 1, 1, \cdots, 1)$ is to check if the 
normal direction of $\Gamma$ along the ray $\mathcal{R}$ is the same as that of $\mathcal{P}$. After differentiation, we derive
$$
(-\frac {n-1}{p-1}\partial_{\lambda_1}F + \sum_{i=2}^n\partial_{\lambda_i}F )|_{(-\frac {n-1}{p-1}t, t, \cdots, t)} 
= (-\frac {n-1}{p-1}\partial_{\lambda_1}F + (n-1)\partial_{\lambda_k}F )|_{(-\frac {n-1}{p-1}t, t, \cdots, t)} = 0
$$
for $k=2, 3, \cdots, n$, which yields that $\nabla F$ to $\Gamma$ along the ray $\mathcal{R}$ is indeed in the same direction of the normal vector $(p-1, 1, 1, \cdots, 1)$
to the hyperplane $\mathcal{P}$. With the ray $\mathcal{R}$ in the boundary of both cones it is obvious that $p_\Gamma = p$. The proof is therefore
completed.
\end{proof}

Consequently, we can calculate $p_{\Gamma^k}$ easily. 

\begin{corollary}\label{Cor:gamma-k-p-index} For the positive cone $\Gamma^k$, we have
\begin{equation}\label{Equ:gamma-k-p-index}
p_{\Gamma^k} = \frac {n(k-1)}{n-k} + 2 \in [2, n]
\end{equation}
for $1\leq k \leq \frac n2$.
\end{corollary}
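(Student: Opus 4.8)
The plan is to apply Lemma \ref{Lem:calculate-p-index} to the G\aa rding cone $\Gamma^k$. First I would recall that $\Gamma^k$ is a convex cone and that $F(\lambda) = -\sigma_k(\lambda)^{1/k}$ is a homogeneous degree-one, symmetric, convex function on $\overline{\Gamma^k}$ --- this is the classical concavity of $\sigma_k^{1/k}$ on the G\aa rding cone, equivalent to the Newton--Maclaurin inequalities --- and that its relevant (smooth) boundary component is $\{\lambda : \sigma_k(\lambda) = 0,\ \sigma_1(\lambda) > 0, \dots, \sigma_{k-1}(\lambda) > 0\}$. Hence the hypotheses of Lemma \ref{Lem:calculate-p-index} are met once we locate the exponent $p$ for which $(-\tfrac{n-1}{p-1}, 1, \dots, 1) \in \partial\Gamma^k$, i.e. for which $\sigma_k(-\tfrac{n-1}{p-1}, 1, \dots, 1) = 0$.

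Next I would carry out the elementary symmetric function bookkeeping. For the vector $(a, 1, \dots, 1)$ with $n-1$ entries equal to $1$, splitting the defining sum of $\sigma_k$ according to whether the slot carrying $a$ is chosen gives
$$
\sigma_k(a, 1, \dots, 1) = \binom{n-1}{k} + a\binom{n-1}{k-1},
$$
so the vanishing condition forces $a = -\binom{n-1}{k}/\binom{n-1}{k-1} = -\tfrac{n-k}{k}$. Setting $\tfrac{n-1}{p-1} = \tfrac{n-k}{k}$ and solving yields $p - 1 = \tfrac{k(n-1)}{n-k}$, hence
$$
p_{\Gamma^k} = \frac{k(n-1)}{n-k} + 1 = \frac{n(k-1)}{n-k} + 2,
$$
which is \eqref{Equ:gamma-k-p-index}. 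The same computation also gives $\sigma_j(-\tfrac{n-k}{k}, 1, \dots, 1) = \binom{n-1}{j-1}(\tfrac{n-j}{j} - \tfrac{n-k}{k}) > 0$ for $1 \le j \le k-1$, since $t \mapsto \tfrac{n-t}{t}$ is strictly decreasing, confirming that the test vector lands on the smooth part of $\partial\Gamma^k$ where Lemma \ref{Lem:calculate-p-index} is applicable.

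Finally I would verify the range. Since $k \ge 1$ we have $p_{\Gamma^k} \ge 2$, with equality precisely at $k = 1$ (consistent with $\mathcal{A}^{(2)} = \Gamma^1$ noted above); and since $k \mapsto \tfrac{n-k}{k}$ is decreasing, $p_{\Gamma^k}$ is increasing in $k$, so over $1 \le k \le \tfrac n2$ it is maximized at $k = \tfrac n2$, where $p_{\Gamma^{n/2}} = n$. Thus $p_{\Gamma^k} \in [2, n]$ for $1 \le k \le \tfrac n2$. There is no genuine obstacle here; the only step requiring care --- which I would handle by invoking standard G\aa rding theory --- is checking the hypotheses of Lemma \ref{Lem:calculate-p-index}, namely the convexity of $-\sigma_k^{1/k}$ and the identification of the correct boundary component of $\Gamma^k$.
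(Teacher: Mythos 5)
Your proposal is correct and takes essentially the same route as the paper: the paper's proof of Corollary \ref{Cor:gamma-k-p-index} is precisely the one-line verification $\sigma_k\bigl(-\tfrac{n-k}{k},1,\dots,1\bigr)=\binom{n-1}{k}-\tfrac{n-k}{k}\binom{n-1}{k-1}=0$ fed into Lemma \ref{Lem:calculate-p-index}. Your extra checks (concavity of $\sigma_k^{1/k}$ on the G\aa rding cone, positivity of the lower $\sigma_j$ at the test vector, and the monotonicity argument for the range $[2,n]$) are sound but go beyond what the paper records.
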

\begin{proof} This simply is because
$$
\sigma_k(-\frac {n-k}k, 1, 1, \cdots, 1) = -\frac {n-k}k \left(\aligned k-1\\n-1\endaligned\right) + \left(\aligned & k\\n & -1\endaligned\right) = 0.
$$
\end{proof}  
 
Remarkably, we are able to derive an asymptotic estimates that extends \cite[Theorem 3.6]{Lab02} significantly.

\begin{corollary}\label{Cor:sigma-k-asymptotic} Suppose that $u$ is nonnegative and that $u\in C^2 (\Omega\setminus S)$ for a
compact subset $S$ inside a bounded domain $\Omega$ in $\mathbb{R}^n$. And suppose 
$\lim_{x\to S}u(x) = +\infty.$
Assume $-\lambda (D^2 u(x)) \in \Gamma^k$ for $ 1\leq k \leq \frac n2$. Then $S$ is of Hausdorff dimension not greater than $n-p_\Gamma$ and, 
for $x_0\in S$, there are a subset $E$ that is $p_{\Gamma^k}$-thin 
for the singular behavior at $x_0$ and a nonnegative number $m$ such that
$$
\lim_{x\to x_0 \text{ and } x\notin E}  \frac {u(x)}{\mathcal{G}^k(x, x_0)} = m.
$$
Moreover $u(x) \geq m \mathcal{G}^k(x, x_0) - c_0$ in some neighborhood of $x_0$, where
$$
\mathcal{G}^k(x, x_0) = \left\{\aligned |x-x_0|^{2 - \frac nk} & \text{ when } 1\leq k < \frac n2\\
- \log |x-x_0| & \text{ when } k = \frac n2\endaligned\right..
$$
\end{corollary}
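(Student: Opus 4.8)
The plan is to recognize that this statement is the specialization of Corollary \ref{Cor:app-fully-nonlinear} to the cones $\Gamma = \Gamma^k$, combined with the value of $p_{\Gamma^k}$ from Corollary \ref{Cor:gamma-k-p-index} and a short algebraic identification of the kernels. First I would invoke Lemma \ref{Lem:fully-p-gamma} with $\Gamma = \Gamma^k$ to conclude that $u$, extended by $+\infty$ on $S$, is a $p_{\Gamma^k}$-superharmonic function on $\Omega$; to legitimately apply that lemma one needs $p_{\Gamma^k}\in(1,n]$, which is exactly the content of Corollary \ref{Cor:gamma-k-p-index}, giving $p_{\Gamma^k} = \frac{n(k-1)}{n-k}+2\in[2,n]$ for $1\le k\le \frac n2$, with the endpoint $p_{\Gamma^k}=n$ occurring precisely when $k=\frac n2$. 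Here one should record that $\Gamma^k$ really is a cone of the type covered by Lemma \ref{Lem:calculate-p-index}: taking $F(\lambda) = -\sigma_k(\lambda)^{1/k}$, which is homogeneous of degree one, symmetric, and convex on $\Gamma^k$ by the Gårding--Maclaurin concavity of $\sigma_k^{1/k}$, one has $\partial\Gamma^k = \{F=0\}$, and the vector $(-\frac{n-k}{k},1,\dots,1) = (-\frac{n-1}{p_{\Gamma^k}-1},1,\dots,1)$ lies on $\partial\Gamma^k$ by the computation $\sigma_k(-\frac{n-k}{k},1,\dots,1)=0$ in the proof of Corollary \ref{Cor:gamma-k-p-index}; thus Lemma \ref{Lem:calculate-p-index} confirms $p_{\Gamma^k}$ is indeed the exponent just written.

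Next I would apply Corollary \ref{Cor:app-fully-nonlinear} verbatim with $p_\Gamma = p_{\Gamma^k}$: it produces the bound $\dim_H S \le n - p_{\Gamma^k}$, a subset $E$ that is $p_{\Gamma^k}$-thin for the singular behavior at $x_0$, a nonnegative number $m$ with $\lim_{x\to x_0,\ x\notin E} u(x)/G_{p_{\Gamma^k}}(x,x_0) = m$, and the lower bound $u(x)\ge m\,G_{p_{\Gamma^k}}(x,x_0) - c_0$ near $x_0$. All that remains is to check $G_{p_{\Gamma^k}}(x,x_0) = \mathcal{G}^k(x,x_0)$. For $1\le k<\frac n2$ one computes $p_{\Gamma^k}-1 = \frac{k(n-1)}{n-k}$ and $n-p_{\Gamma^k} = \frac{(n-1)(n-2k)}{n-k}$, hence $\frac{n-p_{\Gamma^k}}{p_{\Gamma^k}-1} = \frac{n-2k}{k} = \frac nk - 2$, so $G_{p_{\Gamma^k}}(x,x_0) = |x-x_0|^{-(n/k-2)} = |x-x_0|^{2-n/k} = \mathcal{G}^k(x,x_0)$; and for $k=\frac n2$ one has $p_{\Gamma^k}=n$, so $G_n(x,x_0) = -\log|x-x_0| = \mathcal{G}^k(x,x_0)$. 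Substituting this identity into the conclusion of Corollary \ref{Cor:app-fully-nonlinear} gives exactly the assertion of the corollary.

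I do not expect a genuine obstacle here: the statement is essentially an assembly of results already proved in the excerpt, so the proof is bookkeeping plus the two-line exponent computation above. The one place that warrants care is the verification that $\Gamma^k$ satisfies the hypotheses of Lemmas \ref{Lem:fully-p-gamma} and \ref{Lem:calculate-p-index} --- namely that $\Gamma^k$ is convex (equivalently, that $-\sigma_k^{1/k}$ is convex) and that the distinguished ray $t(-\frac{n-k}{k},1,\dots,1)$ lies on $\partial\Gamma^k$ --- both of which are classical facts about Gårding cones. As for the constant $m$, it is the same one furnished by Theorem \ref{Thm:main-asymptotic} applied to the Radon measure $-\Delta_{p_{\Gamma^k}}u$ (which is finite after passing to a slightly smaller domain, by the remark following Lemma \ref{Lem:fully-p-gamma}), so it is tied to the point charge $(-\Delta_{p_{\Gamma^k}}u)(\{x_0\})/|\mathbb{S}^{n-1}|$; for the present statement, however, it suffices to carry it as an unspecified nonnegative number.
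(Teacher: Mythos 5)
Your proposal is correct and matches the paper's (implicit) argument exactly: the paper offers no separate proof of this corollary, treating it as the direct combination of Corollary \ref{Cor:app-fully-nonlinear}, the computation $p_{\Gamma^k}=\frac{n(k-1)}{n-k}+2$ from Corollary \ref{Cor:gamma-k-p-index} via Lemma \ref{Lem:calculate-p-index}, and the exponent identity $\frac{n-p_{\Gamma^k}}{p_{\Gamma^k}-1}=\frac nk-2$ showing $G_{p_{\Gamma^k}}=\mathcal{G}^k$. Your added remarks on the convexity of $\Gamma^k$ and the finiteness of the measure are sound and, if anything, slightly more careful than the paper itself.
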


In general it takes a lot more to rule out the thin set $E$ in the above two corollaries, even for isolated singularities (cf. \cite{KV, Vn17, Lab02}).


\vskip 0.3cm
\noindent Huajie Liu: Department of Mathematics, Nankai University, Tianjin, China; \\e-mail: 
1120220031@mail.nankai.edu.cn 
\vspace{0.2cm}

\noindent Shiguang Ma: Department of Mathematics, Nankai University, Tianjin, China; \\e-mail: 
msgdyx8741@nankai.edu.cn 
\vspace{0.2cm}

\noindent Jie Qing: Department of Mathematics, University of California, Santa Cruz, CA 95064; \\
e-mail: qing@ucsc.edu \\
Research of this author is partially supported by Simons Foundation.
\vspace{0.2cm}

\noindent Shuhui Zhong: School of Mathematics, Tianjin University, Tianjin, China; \\
e-mail: zhshuhui@tju.edu.cn


\end{document}